\newcommand{\R}{\mathbb{R}}
\newcommand{\bfR}{\mathbb{R}}
\newcommand{\Omv}{\Omega_v^\alpha}
\newcommand{\Omy}{\Omega_y^\alpha}
\newcommand{\Omz}{\Omega_z^\alpha}
\newcommand{\Om}{\Omega}
\newcommand{\chfn}{\mathbbm{1}}
\newcommand{\mcA}{\mathcal{A}}
\newcommand{\mcM}{\mathcal{M}}
\newcommand{\mcS}{\mathcal{S}}
\newcommand{\mcD}{\mathcal{D}}
\newcommand{\mcG}{\mathcal{G}}
\newcommand{\mcJ}{\mathcal{J}}
\newcommand{\mcP}{\mathcal{P}}
\newcommand{\mcW}{\mathcal{W}}
\newcommand{\mcX}{\mathcal{X}^\alpha}
\newcommand{\mcV}{\mathcal{V}^\alpha}
\newcommand{\mcY}{\mathcal{Y}^\alpha}
\newcommand{\mcZ}{\mathcal{Z}^\alpha}
\newcommand{\mcU}{\mathcal{U}}
\newcommand{\bS}{\mcS^\alpha}
\newcommand{\mcEVP}{\mathcal{E}_\mathrm{VP}}
\newcommand{\alt}[1]{#1}
\newtheorem{theorem}{Theorem}[section]
\newtheorem*{theorem*}{Theorem}
\newtheorem{corollary}[theorem]{Corollary}
\newtheorem{lemma}[theorem]{Lemma}
\theoremstyle{definition}
\newtheorem{remark}{Remark}[section]
\title{Asymptotic Dynamics of Dispersive, Collisionless Plasmas}
\author{Stephen Pankavich}
\address{Department of Applied Mathematics and Statistics, Colorado School of Mines, Golden, CO 80401.}
\email{pankavic@mines.edu}
\date{\today}
\thanks{The author was supported in part by NSF grants DMS-1911145 and DMS-2107938.}
\begin{document}
\maketitle

\begin{abstract}
A multispecies, collisionless plasma is modeled by the Vlasov-Poisson system. 
Assuming that the electric field decays with sufficient rapidity as $t \to\infty$, we show that the velocity characteristics and spatial averages of the particle distributions converge as time grows large.
Using these limits we establish the precise asymptotic profile of the electric field and its derivatives, as well as, the charge and current densities.
Modified spatial characteristics are then shown to converge using the limiting electric field.
Finally, we establish a modified $L^\infty$ scattering result for each particle distribution function, namely we show that they converge as $t \to \infty$ along the modified spatial characteristics.
When the plasma is non-neutral, the estimates of these quantities are sharp, while in the neutral case they may imply faster rates of decay.

\end{abstract}

\section{Introduction}
We consider a plasma comprised of a large number of charged particles. If there are $N$ distinct species of charge within the plasma, particles of the $\alpha$th species (for $\alpha = 1, ..., N$)
have charge \alt{$q_\alpha \in \mathbb{R}$, mass $m_\alpha > 0$,} and are distributed in phase space at time $t \geq 0$ according to the function $f^\alpha(t,x,v)$ where $x \in \bfR^3$ represents particle position and $v\in \bfR^3$ particle velocity. 
Assuming that electrostatic forces dominate collisional effects, the time evolution of the plasma is described by the multispecies Vlasov-Poisson system
\begin{equation}
\tag{VP}
\label{VP}
\left \{ \begin{aligned}
& \partial_{t}f^\alpha+v\cdot\nabla_{x}f^\alpha+ \alt{\frac{q_\alpha}{m_\alpha}} E \cdot\nabla_{v}f^\alpha=0, \qquad \alpha = 1, ..., N\\
& \rho(t,x)= \sum_{\alpha=1}^N \alt{q_\alpha} \int_{\mathbb{R}^3} f^\alpha(t,x,v) \,dv\\
& E(t,x) = \nabla_x ( \Delta_x)^{-1} \rho(t,x) = \alt{\frac{1}{4\pi}}\int_{\mathbb{R}^3} \frac{x-y}{\vert x - y \vert^3} \rho(t,y) \ dy
\end{aligned} \right .
\end{equation}
with prescribed initial conditions $f^\alpha(0,x,v) = f^\alpha_0(x,v)$.
Here, $E(t,x)$ represents the electric field induced by the charged particles, and $\rho(t,x)$ is the associated density of charge within the plasma.
Additionally, the corresponding current density is defined by
$$j(t,x) = \sum_{\alpha=1}^N \alt{q_\alpha} \int v f^\alpha(t,x,v)  \ dv.$$
%
\alt{For each species, the particle number and velocity} are conserved, namely
$$ \iint f^\alpha(t,x,v) \ dvdx =  \iint f^\alpha_0(x,v) \ dvdx =: \mcM^\alpha$$
and
$$ \iint v f^\alpha(t,x,v) \ dvdx =  \iint v f^\alpha_0(x,v) \ dvdx =: \mcJ^\alpha,$$
for all $t \geq 0$, with the overall net charge and total momentum given by
$$\mcM = \sum_{\alpha = 1}^N \alt{q_\alpha} \mcM^\alpha \qquad \mathrm{and} \qquad \mcJ = \sum_{\alpha = 1}^N \alt{m_\alpha} \mcJ^\alpha.$$
The total energy of the system is also conserved, i.e. for all $t \geq 0$
\begin{align*}
& \sum_{\alpha=1}^N \iint \frac{1}{2}\alt{m_\alpha} |v|^2 f^\alpha(t,x,v) \ dx \ dv + \frac{1}{2} \int |E(t,x)|^2 \ dx \\
& = \sum_{\alpha=1}^N \iint \frac{1}{2} \alt{m_\alpha} |v|^2 f^\alpha_0(x,v) \ dx \ dv + \frac{1}{2} \int |E(0,x)|^2 \ dx =: \mcEVP.
\end{align*}

It is well-known that given smooth initial data \alt{with compact support in phase space or finite moments}, \eqref{VP} possesses a smooth global-in-time solution \cite{LP, Pfaf, Schaeffer}. Such global existence results often depend upon either the propagation of (spatial, velocity, or transported) moments or precise estimates for the growth of the characteristics associated to \eqref{VP}, which are defined by
\begin{equation}
\label{char}
\left \{
\begin{aligned}
&\dot{\mcX}(t, \tau, x, v)=\mcV(t, \tau, x, v)\\
&\dot{\mcV}(t, \tau, x, v)= \alt{\frac{q_\alpha}{m_\alpha}} E(t, \mcX(t, \tau, x, v))
\end{aligned}
\right.
\end{equation}
with initial conditions
$\mcX(\tau, \tau, x, v) = x$ and
$\mcV(\tau, \tau, x, v) = v.$
%
For additional background, we refer the reader to \cite{Glassey, Rein} as general references concerning \eqref{VP} and associated kinetic equations. 

Though well-posedness has been intensely studied, the large time behavior of solutions of \eqref{VP} is less understood. Partial results for the Cauchy problem are known in some special cases, including small data \cite{BD, HRV, Ionescu, Smulevici}, monocharged and spherically-symmetric data \cite{Horst, Pankavich2020}, and lower-dimensional settings \cite{BKR, GPS, GPS2, GPS4, Sch}.
More recently, an understanding of the intermediate asymptotic behavior was obtained in \cite{BCP1}\alt{, namely that there are solutions for which the $L^\infty$ norms of the charge density and electric field can be made arbitrarily large at some later time regardless of their initial size}.
\alt{Due to the dispersive properties imparted upon the system by the transport operator $\partial_t + v \cdot \nabla_x$ and the velocity averaging inherent to the electric field and charge density, one expects that these quantities decay to zero like $t^{-2}$ and $t^{-3}$, respectively, as $t \to \infty$ for all smooth solutions of \eqref{VP}}. In fact, both small data and spherically-symmetric solutions have been shown to exhibit exactly this behavior, and it is known that the Cauchy problem does not possess smooth steady states (cf., \cite{GPS5}).

As we will show, the dynamical behavior of the system depends crucially upon decay of the electric field. 
To date, the best \emph{a priori} rate of decay known \cite{Yang} for the electric field in \eqref{VP} is
$$\| E(t) \|_\infty \leq C(1+t)^{-1/6},$$
and this is derived from precise estimates of the growth of the maximal velocity on the support of $f(t)$, from which the estimate of $\|E(t)\|_\infty$ is deduced via Young's inequality. 
Unfortunately, this resulting estimate is far from optimal.
Additionally, while maximal velocity support estimates have been beneficial to improving the field decay rate \cite{Chen2, Pallardsupport, Jacksupport}, even a sharp estimate (i.e, a uniform bound) on the support cannot allow one to conclude a sharp decay rate of the field.
So, it appears that we are quite far from obtaining precise estimates of the field. 
Still, even when the field decays at a fast rate, the dynamics of the remaining quantities in \eqref{VP} are not well-understood.
Hence, the goal of the current work is to establish the precise large-time dynamical behavior of solutions to \eqref{VP} whenever the electric field is known to decay with sufficient rapidity.

\subsection{Overview and Organization}

Due to the global existence theorem, all quantities of interest are bounded for finite time; thus, we are only concerned with large time estimates.
Hence, we use the notation
$$A(t) \lesssim B(t)$$
to represent the statement that there is $C > 0$
such that
$A(t) \leq CB(t)$
for $t$ sufficiently large.
When necessary, $C$ will denote a positive constant (independent of the solution) that may change from line to line.

%
Throughout we let $\mcU \subset \bfR^6$ represent an arbitrary compact set, take $f^\alpha_0 \in C_c^2(\bfR^6)$, and let $f^\alpha(t,x,v)$ denote the corresponding $C^2$ solution of \eqref{VP}.
We assume that the dispersion in the system induces strong decay of the electric field, namely there is $p \in \left (\frac{5}{3}, 2 \right]$ such that
\begin{equation}
\label{Assumption}
\tag{A}
\|E(t) \|_\infty \lesssim t^{-p}.
\end{equation}
We note that this assumption is known to be satisfied for monocharged, spherically-symmetric initial data \cite{Horst, Pankavich2020} and for all previously constructed perturbative solutions (e.g., \cite{BD}).


\alt{Though we will assume compactly-supported initial data, this may not be necessary as velocity, spatial, and transported moments \cite{Castella, Chen2, LP, Pallardspatial, Pallardsupport} have all be used in lieu of this assumption to develop the existence theory.
In addition, the regularity assumptions on initial data may be weakened to arrive at similar convergence results in weaker topologies (see \cite{Ionescu}).}
The novelty herein is that a modified \alt{scattering} result for classical solutions of a multispecies plasma is established and the precise asymptotic profile of the electric field, its derivatives, and the charge and current density are obtained.  
Additionally, \alt{as in \cite{Ionescu}, we further} demonstrate the importance of the velocity-dependent density $\mcP(t,v)$, defined in Theorem \ref{T1}, rather than the charge density, to understanding the large time behavior of the system, and require only $C^2$ initial data rather than many higher derivatives in $L^2$. 
Finally, our results apply directly to general conditions that may be satisfied by any plasma and not merely to \alt{small data} solutions. \alt{Furthermore, our methods do not depend upon the sign of the force field, only its strength for large times, and therefore the tools established herein also apply to solutions of the gravitational Vlasov-Poisson system that satisfy \eqref{Assumption}.}

\subsection{Main Results}
For $t \geq 0$ define the support of $f^\alpha(t)$ by
$$\alt{\bS_f}(t) = \overline{\left \{ (x,v) \in \R^6 : f^\alpha(t,x,v) \neq 0 \right \}}.$$
Our main results can now be stated precisely.
\begin{theorem}
\label{T1}
Assume \eqref{Assumption} holds. Then, we have the following: 
\begin{enumerate}
\item 
For every $\alpha = 1, ..., N$, $\tau \geq 0$ and $(x,v) \in \mcU$ the limiting function $\mcV_\infty$ defined by
$$\mcV_\infty(\tau, x, v) :=  \lim_{t \to \infty} \mcV(t, \tau, x, v)$$ 
exists and is $C^2$ and bounded.
Additionally, for $\tau \geq 0$ and $(x,v) \in \mcU$,
$$\left | \mcV(t, \tau, x, v) - \mcV_\infty(\tau, x, v)  \right | \lesssim t^{-1}.$$

\item 
For every $\alpha = 1, ..., N$ define
$$\Omv = \left \{ \mcV_\infty(0, x, v) : (x, v) \in \alt{\bS_f(0)} \right \}.$$
Then, there exist $F^\alpha_\infty \in C_c^2(\bfR^3)$ supported on $\Om_v^\alpha$ such that the spatial average
$$F^\alpha(t,v) = \int f^\alpha(t,x, v) \ dx$$
satisfies $F^\alpha(t,v) \to F^\alpha_\infty(v)$ uniformly as $t \to \infty$, namely
$$\| F^\alpha(t) - F^\alpha_\infty \|_\infty \lesssim t^{-1}\ln^{\alt{4}}(t).$$
%
Moreover, the net density
$$\mcP(t,v) = \sum_{\alpha=1}^N  \alt{q_\alpha} F^\alpha(t,v)$$
converges uniformly to the limit
$$\mcP_\infty(v) = \sum_{\alpha=1}^N  \alt{q_\alpha} F^\alpha_\infty(v)$$
at the same rate, and satisfies
\begin{equation}
\label{Pinfmass}
\int \mcP_\infty(v) \ dv = \mcM.
\end{equation}

\item Define $E_\infty(v) = \nabla_v(\Delta_v)^{-1} \mcP_\infty(v)$. Then, we have the self-similar asymptotic profiles
\begin{align*}
\sup_{x\in \bfR^3} \left | t^2 E(t,x) - E_\infty \left (\frac{x}{t} \right ) \right | & \lesssim t^{-1}\ln^{\alt{4}}(t),\\
\sup_{x \in \bfR^3}  \left | t^3 \nabla_xE(t,x) - \nabla_v E_\infty \left (\frac{x}{t} \right) \right | & \lesssim t^{-1} \ln^{\alt{6}}(t),\\
\sup_{x \in \bfR^3}   \left | t^3 \rho(t,x) - \mcP_\infty \left (\frac{x}{t} \right) \right | & \lesssim t^{-1} \ln^{\alt{5}}(t),\\
\sup_{x \in \bfR^3}  \left | t^3 j(t,x) - \frac{x}{t}\mcP_\infty\left (\frac{x}{t} \right) \right | & \lesssim t^{-1}\ln^{\alt{5}}(t).
\end{align*}

\item 
For any $\alpha = 1, ..., N$, $\tau, t \geq 1$ and $(x,v) \in \mcU$, define
$$\mcZ(t,\tau, x, v) = \mcX(t, \tau, x, v) - t \mcV(t, \tau, x, v) + \alt{q_\alpha} \ln(t) E_\infty(\mcV(t,\tau, x, v) ).$$ 
Then, the limiting function $\mcZ_\infty$ given by
$$\mcZ_\infty(\tau, x, v) :=  \lim_{t \to \infty} \mcZ(t, \tau, x, v)$$ 
exists and is $C^2$ and bounded.
Additionally, for any $\tau \geq 0$ and $(x,v) \in \mcU$,\
$$|\mcZ(t, \tau, x, v) - \mcZ_\infty(\tau, x, v) | \lesssim t^{-1}\ln^{\alt{4}}(t).$$

\item
For every $\alpha = 1, ..., N$ define
$$\Omz = \left \{ \mcZ_\infty(1, x, v) : (x, v) \in \alt{\bS_f(1)} \right \}$$
and
$\Om^\alpha = \Omz \times \Omv$.
Then, there is $f^\alpha_\infty \in C_c^2(\bfR^6)$ supported on $\Om^\alpha$ such that
$$f^\alpha \left (t,x +vt - \alt{\frac{q_\alpha}{m_\alpha}} \ln(t)E_\infty(v),v \right) \to f^\alpha_\infty(x,v)$$
uniformly
as $t \to \infty$, namely we have the convergence estimate
$$\sup_{(x,v) \in \bfR^6} \left | f^\alpha \left (t,x +vt - \alt{\frac{q_\alpha}{m_\alpha}} \ln(t)E_\infty(v), v \right) - f^\alpha_\infty(x,v) \right |  \lesssim t^{-1}\ln^{\alt{4}}(t).$$
In particular, $f^\alpha_\infty$ conserves \alt{particle number, velocity}, and energy, i.e.
$$\iint f^\alpha_\infty(x,v) \ dv dx = \mcM^\alpha, \ 
\iint v f^\alpha_\infty(x,v) \ dv dx = \mcJ^\alpha, \ \mathrm{and} \ 
\sum_{\alpha=1}^N \iint \frac{1}{2} \alt{m_\alpha}|v|^2 f^\alpha_\infty(x,v) \ dv dx = \mcEVP.$$
\end{enumerate}
\end{theorem}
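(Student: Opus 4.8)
The plan has one genuinely hard preliminary step followed by four largely mechanical parts, and \emph{the main obstacle will be upgrading the hypothesis \eqref{Assumption} to the sharp decay} $\|E(t)\|_\infty\lesssim t^{-2}$ and $\|\nabla_x E(t)\|_\infty\lesssim t^{-3}\ln^a(t)$, since every rate in the statement rests on it. Because \eqref{Assumption} with $p>1$ already keeps $\mcV(t,\tau,x,v)$ bounded, the spatial support of $f^\alpha(t)$ grows at most like $t$; writing $f^\alpha(t,x,v)=f^\alpha_0(\mcX(0,t,x,v),\mcV(0,t,x,v))$ and changing variables $v\mapsto\mcX(0,t,x,v)$ in the velocity integral, the bound $\|\rho(t)\|_\infty\lesssim t^{-3}$ reduces to the non-degeneracy estimate $|\det\nabla_v\mcX(0,t,x,v)|\gtrsim t^3$. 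I would obtain this together with the growth bounds $\|\nabla_{x,v}\mcX(s,t,\cdot)\|+\|\nabla_{x,v}\mcV(s,t,\cdot)\|\lesssim s$ and their second-order analogues from the integral forms of the variational equations of \eqref{char}, run as a continuity argument in which $\|\nabla_x E\|_\infty$ is controlled by the classical logarithmic potential estimate in terms of $\|\rho\|_\infty$, $\|\nabla_x\rho\|_\infty$ and the support radius, while $\|\rho\|_\infty$ and $\|\nabla_x\rho\|_\infty$ are in turn controlled back through the flow gradients. The restriction $p>\tfrac53$ should be exactly what forces the time-weighted integrals in this loop to converge; the logarithmic losses accumulated over the three orders of differentiation are what produce the powers $\ln^4,\ln^5,\ln^6$ in the statement. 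This step also yields the self-similar limits of $\rho$ and $E$ directly, via the backward characteristics above: with $y=t\zeta$, $x=t\xi$ one has $t^2E(t,t\xi)=\tfrac1{4\pi}\int\tfrac{\xi-\zeta}{|\xi-\zeta|^3}\,[t^3\rho(t,t\zeta)]\,d\zeta$, and a change of variables $v=\zeta+u/t$ in the velocity integral for $t^3\rho(t,t\zeta)$ (absorbing a $\ln(t)E_\infty(\zeta)$ shift) shows it converges, defining $\mcP_\infty$ and $E_\infty=\nabla_v(\Delta_v)^{-1}\mcP_\infty$ and giving the profiles for $\rho$, $E$, $\nabla_xE$ (Calder\'on--Zygmund bound) and $j\approx(x/t)\rho$ (since $v-x/t=O(t^{-1}\ln t)$ on the support), i.e.\ part (3).

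Granting these estimates, part (1) is immediate from \eqref{char}: $\mcV(t,\tau,x,v)=v+\tfrac{q_\alpha}{m_\alpha}\int_\tau^t E(s,\mcX(s,\tau,x,v))\,ds$ and $\|E(s)\|_\infty\lesssim s^{-2}$ make the integrand absolutely integrable, so $\mcV_\infty$ exists with $|\mcV(t,\tau,x,v)-\mcV_\infty(\tau,x,v)|\lesssim t^{-1}$, and differentiating in $(x,v)$ with a Gr\"onwall estimate using the flow-gradient and $\nabla_x E,\nabla_x^2E$ bounds shows $\mcV_\infty\in C^2$ and bounded. For the spatial averages (part (2)) I would integrate \eqref{VP} in $x$ to obtain the transport identity $\partial_t F^\alpha(t,v)=-\tfrac{q_\alpha}{m_\alpha}\nabla_v\cdot\int E(t,x)f^\alpha(t,x,v)\,dx$; since $f^\alpha(t,\cdot,v)$ concentrates near $x\approx tv$ and $E(t,x)\approx t^{-2}E_\infty(x/t)$ by the profile above, the right side equals $-\tfrac{q_\alpha}{m_\alpha}t^{-2}\nabla_v\cdot(E_\infty(v)F^\alpha(t,v))$ up to a remainder integrable in $t$, so a Gr\"onwall argument gives uniform convergence $F^\alpha\to F^\alpha_\infty$ at the stated rate; the first two $v$-derivatives must be propagated through this same transport structure rather than through $\nabla_v f^\alpha$ (which grows like $t$) in order to conclude $F^\alpha_\infty\in C_c^2$ and, comparing with the profile of $\rho$, that $\mcP_\infty=\sum_\alpha q_\alpha F^\alpha_\infty$. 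Then \eqref{Pinfmass} follows from $\int\mcP(t,v)\,dv=\sum_\alpha q_\alpha\mcM^\alpha=\mcM$ and the uniform convergence on a fixed compact set.

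For part (4) I would differentiate $\mcZ$ in $t$: the terms $\dot{\mcX}-\mcV$ cancel, $-t\dot{\mcV}=-t\tfrac{q_\alpha}{m_\alpha}E(t,\mcX(t,\tau,x,v))$, and by part (3) together with the preliminary estimate $\mcX(t,\tau,x,v)/t=\mcV(t,\tau,x,v)+O(t^{-1}\ln t)$ this equals $-\tfrac{q_\alpha}{m_\alpha}t^{-1}E_\infty(\mcV(t,\tau,x,v))$ up to an integrable error --- precisely the leading term that the $\ln(t)E_\infty(\mcV)$ correction in the definition of $\mcZ$ is engineered to cancel (its $t$-derivative producing $+\tfrac{q_\alpha}{m_\alpha}t^{-1}E_\infty(\mcV)$ plus an $O(t^{-2}\ln t)$ piece, using $\dot{\mcV}=O(t^{-2})$). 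Hence $\dot{\mcZ}$ is absolutely integrable, $\mcZ_\infty$ exists with the stated rate, and differentiating in $(x,v)$ with the second-order flow bounds gives $C^2$ and boundedness.

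Part (5) will then follow from the invariance of $f^\alpha$ along characteristics. Using $\mcX=\mcZ+t\mcV-\tfrac{q_\alpha}{m_\alpha}\ln(t)E_\infty(\mcV)$ one has $f^\alpha\bigl(t,\,\mcZ(t,1,x,v)+t\mcV(t,1,x,v)-\tfrac{q_\alpha}{m_\alpha}\ln(t)E_\infty(\mcV(t,1,x,v)),\,\mcV(t,1,x,v)\bigr)=f^\alpha(1,x,v)$, so the shifted profile in the statement is $f^\alpha(1,\cdot)$ composed with the inverse of $\Theta_t\colon(x,v)\mapsto(\mcZ(t,1,x,v),\mcV(t,1,x,v))$, which by parts (1) and (4) converges in $C^2$ on compacts, at rate $t^{-1}\ln^4(t)$, to $\Theta_\infty:=(\mcZ_\infty(1,\cdot),\mcV_\infty(1,\cdot))$. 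The key observation is the factorization $D\Theta_t=L_t\,D\Phi_t$, where $\Phi_t\colon(x,v)\mapsto(\mcX(t,1,x,v),\mcV(t,1,x,v))$ is the flow map of \eqref{char} and hence volume-preserving, and $L_t$ is upper block-triangular with unit determinant; thus $|\det D\Theta_t|\equiv1$, so $|\det D\Theta_\infty|\equiv1$ and $\Theta_\infty$ is a $C^2$ diffeomorphism onto its image. Hence $f^\alpha_\infty:=f^\alpha(1,\cdot)\circ\Theta_\infty^{-1}$, extended by zero, lies in $C_c^2(\R^6)$ and is supported in $\Om^\alpha$ (the $\mcV_\infty$-component lands in $\Omv$ because $\mcV_\infty$ is constant along each characteristic, the $\mcZ_\infty$-component in $\Omz$), and the convergence estimate follows from the quantitative inverse function theorem applied to $\Theta_t\to\Theta_\infty$. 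The conservation laws follow by changing variables back through $\Theta_\infty$, using $|\det D\Theta_\infty|\equiv1$ together with the conservation of particle number, total velocity and total energy along the flow, and the decay $\|E(s)\|_2^2\lesssim s^{-1}\to0$, which removes the field energy in the limit.
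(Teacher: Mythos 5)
Your plan correctly identifies that everything hinges on upgrading \eqref{Assumption} to $\|\rho(t)\|_\infty\lesssim t^{-3}$ and $\|E(t)\|_\infty\lesssim t^{-2}$, but the route you propose for this step has a genuine hole, and it is not the route the paper takes. You want to change variables $v\mapsto\mcX(0,t,x,v)$ and reduce the density bound to $|\det\nabla_v\mcX(0,t,x,v)|\gtrsim t^3$, closed by a continuity argument coupling $\|\nabla_xE\|_\infty$, $\|\nabla_x\rho\|_\infty$ and the flow gradients. Starting only from $\|E(t)\|_\infty\lesssim t^{-p}$ with $p\le 2$, the correction to the free-streaming Jacobian $-t\,\mathbb{I}$ has the form $\int_0^t s\,\|\nabla_xE(s)\|_\infty\,|\partial\mcX/\partial v(s,t)|\,ds\lesssim t\int_0^t s^{1-q}\ln s\,ds$ with $q=3p-3>2$; this integral is \emph{finite but not small}, so without a smallness hypothesis (or restricting to characteristics launched from a large time $T$, where the sub-block $\nabla_x\mcX(0,T,\cdot)$ of the time-$T$ flow has no a priori lower bound on its determinant) you cannot conclude non-degeneracy of $\nabla_v\mcX(0,t,x,v)$. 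The paper avoids this entirely: it first shows that for two points $(x,v_1),(x,v_2)\in\bS_f(t)$ with the \emph{same} spatial location, integrating \eqref{char} backward forces $|v_1-v_2|\lesssim t^{1-p}$, so the velocity support at fixed $x$ has measure $\lesssim t^{3(1-p)}$ and $\|\rho(t)\|_\infty\lesssim t^{3(1-p)}$ with $3(1-p)<-2$ exactly when $p>5/3$ — a soft argument needing no derivative control whatsoever — and then invokes Schaeffer's bootstrap theorem ($\|\rho\|_\infty\lesssim t^{-q}$, $q>2$ $\Rightarrow$ $\|\rho\|_\infty\lesssim t^{-3}$) as a black box. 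Your sketch contains neither the velocity-support mechanism (which is where the threshold $p>5/3$ actually comes from) nor a workable substitute for Schaeffer's result, so the foundational step is not established.

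There is also an ordering problem between your parts (2) and (3). You claim the self-similar profile of $\rho$ "converges, defining $\mcP_\infty$" already in the preliminary step, and then use $E(t,x)\approx t^{-2}E_\infty(x/t)$ to prove $F^\alpha\to F^\alpha_\infty$. But after the change of variables one has $t^3\rho(t,t\zeta)=\sum_\alpha q_\alpha\int g^\alpha(t,y,\zeta-y/t)\,dy=\mcP(t,\zeta)+O(t^{-1}\mu(t)^{4/3}\mcG(t))$, so the existence of the limit of $t^3\rho(t,t\cdot)$ \emph{is} the convergence of the spatial averages; it must be proved first, as the paper does, via the transport identity for $F^\alpha$ — and there the cancellation $t\nabla_x-\nabla_v$ acting on $f^\alpha$ (i.e., $\nabla_vg^\alpha$, which grows only like $\ln^2t$) is indispensable already at zeroth order: estimating $\partial_tF^\alpha=-\frac{q_\alpha}{m_\alpha}\nabla_v\cdot\int Ef^\alpha dx$ through $\|\nabla_vf^\alpha\|_\infty\lesssim t$ gives only $t^{-1}\ln^3t$, which is not integrable. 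Two smaller points: a Calder\'on--Zygmund bound does not give the $L^\infty$ profile of $\nabla_xE$ — one needs the $\ln(t)$-losing interpolation with a near/far splitting as in the paper's Lemma \ref{LDE}; on the other hand, your observation in part (5) that $D\Theta_t=L_t\,D\Phi_t$ with $L_t$ unit-determinant block-triangular, so $\det D\Theta_t\equiv1$ and $\Theta_\infty$ is automatically non-degenerate at $\tau=1$, is correct and is actually cleaner than the paper's Schur-complement argument, which only obtains invertibility for $\tau\ge T_2$ large.
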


If the plasma is non-neutral, i.e. $\mcM \neq 0$, then $\mcP_\infty \not\equiv 0$ due to \eqref{Pinfmass} and these estimates are sharp, up to a correction in the logarithmic powers of the error terms (see Section \ref{sect:prop}).
However, when the plasma is neutral, i.e. $\mcM = 0$, it is possible that the limiting density $\mcP_\infty$ (and hence the limiting field $E_\infty$) is identically zero, which implies stronger decay of these quantities.
\alt{This could be} related to the phenomenon of Landau Damping \cite{VM}, which \alt{can give rise to decay that is faster than the dispersive rates and} has recently been shown to occur for unconfined, but screened, neutral plasmas that are sufficiently close to spatially-homogeneous equilibria \cite{BMM, HNR}.

\begin{theorem}
\label{T2}
If $\mcM = 0$ and $\mcP_\infty \equiv 0$, then the asymptotic behavior described above is altered in the following manner:
\begin{enumerate}
\item 
For any $\alpha = 1, ..., N$, $\tau \geq 0$ and $(x,v) \in \mcU$, we have
$$\left | \mcV(t, \tau, x, v) - \mcV_\infty(\tau, x, v)  \right | \lesssim t^{-2}.$$

\item We have the faster decay estimates
\begin{align*}
\| F^\alpha(t) - F^\alpha_\infty \|_\infty & \lesssim t^{-2}, & 
\| \mcP(t) \|_\infty & \lesssim t^{-2}, \\
\| E(t) \|_\infty & \lesssim t^{-3}, & 
\| \nabla_xE(t) \|_\infty & \lesssim t^{-4} \ln(t),\\
\| \rho(t) \|_\infty & \lesssim t^{-4}, & 
\| j(t) \|_\infty & \lesssim t^{-4}.
\end{align*}

\item For any $\alpha = 1, ..., N$, the spatial characteristics
$$\mcY(t,\tau, x, v) = \mcX(t, \tau, x, v) - t \mcV(t, \tau, x, v),$$ 
converge as $t \to \infty$, and the limiting functions 
$$\mcY_\infty(\tau, x, v) :=  \lim_{t \to \infty} \mcY(t, \tau, x, v)$$
are $C^2$ and bounded.
Additionally, for any $\tau \geq 0$ and $(x,v) \in \mcU$,\
$$|\mcY(t, \tau, x, v) - \mcY_\infty(\tau, x, v) | \lesssim t^{-1}.$$

\item
For any $\alpha = 1, ..., N$ define
$$\Omy = \left \{ \mcY_\infty(0, x, v) : (x, v) \in \alt{\bS_f(0)} \right \}$$
and
$\Om^\alpha = \Omy \times \Omv$.
Then, there is $f^\alpha_\infty \in C_c^2(\bfR^6)$ supported on $\Om^\alpha$ such that
$$f^\alpha(t,x +vt,v) \to f^\alpha_\infty(x,v)$$
uniformly
as $t \to \infty$, namely we have the convergence estimate
$$\sup_{(x,v) \in \bfR^6} \left | f^\alpha(t,x +vt, v) - f^\alpha_\infty(x,v) \right |  \lesssim t^{-1}$$
\alt{and again, the particle number, velocity, and energy are all conserved in the limit.}
\end{enumerate}
\end{theorem}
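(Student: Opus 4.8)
The plan is to bootstrap the conclusions of Theorem~\ref{T1} — which already hold since \eqref{Assumption} is in force — using only the additional input $\mcP_\infty \equiv 0$; no genuinely new machinery is needed, one essentially reruns the argument of Theorem~\ref{T1} with a stronger field bound. First observe that $E_\infty = \nabla_v(\Delta_v)^{-1}\mcP_\infty \equiv 0$ and $\nabla_v E_\infty \equiv 0$, so the modified spatial characteristic of Theorem~\ref{T1}(4) collapses: $\mcZ(t,\tau,x,v) = \mcX(t,\tau,x,v) - t\,\mcV(t,\tau,x,v) = \mcY(t,\tau,x,v)$. Hence parts (3)--(5) of Theorem~\ref{T2} are exactly parts (4)--(5) of Theorem~\ref{T1} with the logarithmic factors removed and, for the characteristics, one extra power of $t$ gained; this gain will come directly from the fact that the self-similar leading terms of $\rho$, $j$, $E$, $\nabla_x E$ all carry a factor of $\mcP_\infty$ (resp.\ $E_\infty$) and therefore vanish.

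The core of the proof is Step~1, the improved field decay. In self-similar variables Theorem~\ref{T1}(3) reads $\sup_x|t^3\rho(t,x)-\mcP_\infty(x/t)|\lesssim t^{-1}\ln^5 t$, and likewise for $j$, $E$, $\nabla_x E$; with $\mcP_\infty\equiv0$ the principal profiles drop, so it remains to sharpen the error. I would redo the change of variables along the characteristics that produces these profiles — now with no $\ln t\,E_\infty$ correction, which is precisely what generated the logarithms — to obtain $\|\rho(t)\|_\infty\lesssim t^{-4}$ and $\|j(t)\|_\infty\lesssim t^{-4}$. Plain interpolation of $\|\rho(t)\|_1\lesssim 1$ against this is insufficient for the field (it yields only $t^{-8/3}$), so for $\|E(t)\|_\infty$ I would use the Riesz representation in \eqref{VP} together with three structural facts: the pointwise bound just obtained, the neutrality $\int\rho(t,y)\,dy=\mcM=0$ (which improves the far field after subtracting the monopole), and the fact that $\operatorname{supp}\rho(t)$ sits in a ball of radius $\lesssim t$ (velocities are uniformly bounded because $|\dot{\mcV}|\lesssim\|E(t)\|_\infty\lesssim t^{-p}$ with $p>1$). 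Splitting the kernel into $|x-y|\le 1$ and $|x-y|>1$ then gives $\|E(t)\|_\infty\lesssim t^{-4}+t^{-4}\cdot t\lesssim t^{-3}$, and differentiating the kernel gives $\|\nabla_x E(t)\|_\infty\lesssim t^{-4}\ln t$.

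Step~2 feeds these bounds back into the characteristic system \eqref{char}. Integrating $\dot{\mcV}=\tfrac{q_\alpha}{m_\alpha}E(t,\mcX)$ from $t$ to $\infty$ with $|E(s,\mcX(s,\tau,x,v))|\lesssim s^{-3}$ yields $|\mcV(t,\tau,x,v)-\mcV_\infty(\tau,x,v)|\lesssim t^{-2}$; since $\dot{\mcY}=-t\,\tfrac{q_\alpha}{m_\alpha}E(t,\mcX)$, integrating $s\,|E(s,\mcX(s))|\lesssim s^{-2}$ gives $|\mcY(t,\tau,x,v)-\mcY_\infty(\tau,x,v)|\lesssim t^{-1}$, the convergence of $\int^\infty s\cdot s^{-3}\,ds$ being exactly why no $\ln t$ modification is needed. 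Differentiating \eqref{char} once and twice in $(x,v)$, using $\|\nabla_x E(t)\|_\infty\lesssim t^{-4}\ln t$ and the $O(t)$ growth of $\nabla_{x,v}\mcX$, shows $\nabla\mcV,\nabla^2\mcV,\nabla\mcY,\nabla^2\mcY$ converge on $\mcU$, so $\mcV_\infty,\mcY_\infty\in C^2$ and bounded; and $\partial_t F^\alpha(t,v)=-\tfrac{q_\alpha}{m_\alpha}\nabla_v\!\cdot\!\int E(t,x)f^\alpha(t,x,v)\,dx$ together with the improved field decay gives $\|F^\alpha(t)-F^\alpha_\infty\|_\infty\lesssim t^{-2}$ and $\|\mcP(t)\|_\infty\lesssim t^{-2}$. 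For the scattering statement, write $f^\alpha(t,x,v)=f^\alpha_0(\mcX(0,t,x,v),\mcV(0,t,x,v))$; the limiting map $(a,b)\mapsto(\mcY_\infty(0,a,b),\mcV_\infty(0,a,b))$ is a volume-preserving $C^2$ diffeomorphism of $\bS_f(0)$ onto $\Omy\times\Omv$ (as in the proof of Theorem~\ref{T1}), and defining $f^\alpha_\infty$ as the corresponding push-forward of $f^\alpha_0$, one estimates $|f^\alpha(t,x+vt,v)-f^\alpha_\infty(x,v)|$ by composing the Lipschitz bound for $f^\alpha_0$ with the rates $|\mcY-\mcY_\infty|\lesssim t^{-1}$, $|\mcV-\mcV_\infty|\lesssim t^{-2}$ and a uniform invertibility bound, the slower rate $t^{-1}$ dominating; the conservation of particle number, velocity and energy in the limit follows by passing to the limit in the conserved quantities for $f^\alpha(t)$, using the uniform-in-$t$ compact support and the uniform convergence.

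The main obstacle is the field estimate $\|E(t)\|_\infty\lesssim t^{-3}$ in Step~1: interpolation between the $L^1$ and $L^\infty$ bounds on $\rho(t)$ is not enough, so one must genuinely exploit the vanishing of the self-similar profile, the neutrality $\mcM=0$, and the $O(t)$ size of the support simultaneously. The secondary difficulty is propagating the $C^2$ bounds for the characteristics through the bootstrap, since the second-order variational equations are driven by $\nabla_x E$ multiplied by first-order derivatives of the flow that grow linearly in $t$, and one needs the logarithmically-corrected $t^{-4}$ decay of $\nabla_x E$ to keep these forcing terms integrable.
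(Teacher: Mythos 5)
Your overall strategy is the paper's strategy: rerun the Theorem \ref{T1} machinery with the observation that $E_\infty \equiv \nabla_v E_\infty \equiv 0$ kills the leading self-similar profiles, bootstrap the resulting improved field decay back through the characteristics, and redo the scattering argument with $\mcZ = \mcY$. Two of your local arguments differ from the paper's, and both are viable. For $\|E(t)\|_\infty \lesssim t^{-3}$ you split the Riesz kernel in physical space using $\|\rho(t)\|_\infty \lesssim t^{-4}$ and the $O(t)$ support radius; the paper instead reads the bound off Lemma \ref{LField} in velocity-convolution form, $t^2\|E(t)\|_\infty \lesssim \|\mcP(t)\|_\infty + t^{-1}\mu(t)\mcG(t) \lesssim t^{-1}$. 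Your route is more elementary but, as you note, requires the log-free $t^{-4}$ bound on $\rho$ as input, whereas the paper's route needs only $\|\mcP(t)\|_\infty$ and the uniform bounds on $\mu,\mcG$ (and your correct remark that interpolation alone gives only $t^{-8/3}$ is exactly why Lemma \ref{LE} is not used here). For part (4) you use a Lagrangian push-forward of $f^\alpha_0$ and a Lipschitz-composition estimate against the rates $|\mcY - \mcY_\infty| \lesssim t^{-1}$, $|\mcV-\mcV_\infty|\lesssim t^{-2}$; the paper instead shows $\|\partial_t g^\alpha(t)\|_\infty \lesssim t\|E(t)\|_\infty + \|E(t)\|_\infty\mcG(t) \lesssim t^{-2}$ and integrates, using the push-forward only to identify the limit and its support. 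Your version needs a uniform bound on the inverse of the limiting map (available only for base time $\tau$ large, as in Lemma \ref{LModChar}, so you should anchor the push-forward at some $T\geq T_2$ rather than at $\tau=0$), but is otherwise fine.

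One point is imprecise and worth flagging: the logarithms in the Theorem \ref{T1} error terms are \emph{not} generated by the $\ln(t)E_\infty$ correction. They come from the growth of the spatial support $\mu(t)\lesssim \ln^3(t)$ and the velocity-derivative bound $\mcG(t)\lesssim\ln^2(t)$ of the translated distributions, which enter every error term in Lemmas \ref{LField}--\ref{LCurrent}. Consequently, ``redoing the change of variables without the $\ln t\,E_\infty$ correction'' does not by itself yield $\|\rho(t)\|_\infty\lesssim t^{-4}$ without logarithms. The correct mechanism (and the one the paper uses) is a genuine two-pass bootstrap: the first pass gives $\|E(t)\|_\infty\lesssim t^{-3}\ln^5(t)$ and $\|\nabla_xE(t)\|_\infty\lesssim t^{-4}\ln^7(t)$, which make $\int^\infty s\|E(s)\|_\infty\,ds$ and $\int^\infty s^2\|\nabla_xE(s)\|_\infty\,ds$ converge, hence $\mu(t)\lesssim 1$ and $\mcG(t)\lesssim 1$ by Lemmas \ref{Xsupp} and \ref{Dvg}; only then do Lemmas \ref{Funif} and \ref{LDensity} give $\|\mcP(t)\|_\infty\lesssim t^{-2}$ (up to logs) and $\|\rho(t)\|_\infty\lesssim t^{-4}$ with no logarithm. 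Since your concluding paragraph shows you understand that the integrability of $s^2\|\nabla_xE(s)\|_\infty$ is what closes the derivative estimates, this is a presentational gap rather than a fatal one, but as written the chain producing the log-free $t^{-4}$ bound on $\rho$ — the input to your kernel splitting — is not actually supplied.
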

We note that solutions with a trivial electric field and charge density can be constructed when $\mcM = 0$, but not when $\mcM \neq 0$, and this accounts for the contrasting asymptotic behavior of the systems.
In particular, considering the simplified regime in which only electrons and a single species of ions \alt{with opposite charge} are considered, namely $N=2$, $\alt{q_1} = +1$, and $\alt{q_2} = -1$, one may take $f^1_0(x,v) = f^2_0(x,v) \in C^2_c(\bfR^6)$.
This implies 
$$ \rho(0, x) = \int \left ( f^1_0(x,v) - f^2_0(x,v) \right ) \ dv = 0$$
for all $x \in \bfR^3$, and thus
$\mcM = 0$ and $E(0,x) \equiv 0$. As the two distribution functions then satisfy the same evolution equation (with zero electric field) and the same initial data, uniqueness guarantees $\rho(t,x) = E(t,x) = 0$ for all $t \geq 0, x \in \bfR^3$.
Hence, $\mcP_\infty \equiv E_\infty \equiv 0$, and this example demonstrates that the decay estimates provided by Theorem \ref{T2} need not be optimal in general.
Such charge cancellation may also occur in the limit rather than choosing the charge density and electric field to be identically zero for all time.

Contrastingly, a neutral plasma may display the same asymptotic behavior as in the case $\mcM \neq 0$, whenever $\mcP_\infty \not\equiv 0$.
In particular, considering again $N=2$, $\alt{q_1} = +1$, and $\alt{q_2} = -1$, the initial distribution of positive ions and electrons can be widely separated in space with sufficiently large velocities in opposing directions. Then, the field will be sufficiently weak so that the plasma will behave like two non-interacting monocharged species, and the asymptotic behavior will be analogous to the non-neutral case. Here, the two spatial averages $F^1(t,v)$ and $F^2(t,v)$ can be structured to converge to different limiting functions $F^1_\infty(v) \neq F^2_\infty(v)$, and thus
$$\mcP_\infty(v) = F^1_\infty(v) - F^2_\infty(v) \not \equiv 0.$$

\begin{remark}
Rather than imposing \eqref{Assumption}, one may instead require a decay condition on the charge density, namely
$\|\rho(t) \|_\infty \lesssim t^{-q}$ for some $q \in (2,3]$, or a growth condition on the spatial support of the translated distribution function defined in the next section, 
i.e. $\alt{\mu(t)} \lesssim t^{r}$ for some $r \in \left ( 0, \frac{1}{2} \right )$.
\end{remark}

\begin{remark}
We note that our results can be extended to higher ($d \geq \alt{4}$) dimensions, as well, with the same decay assumption and methods. 
\end{remark}

%

\subsection{Strategy of the Proof}
To establish the theorems we will use both Lagrangian and Eulerian methods, as the latter allow us to prove sharp estimates and establish the existence of limits, while the former captures the properties of limiting functions.
In the Eulerian framework, we reformulate the original problem within a dispersive reference frame that is co-moving with the particles.
More specifically, let 
$$g^\alpha(t,x,v) = f^\alpha(t, x+vt, v)$$
for $\alpha = 1, ..., N$ and apply a change of variables (see the proof of Lemmas \ref{LField} and \ref{LDensity}) to the field and charge density so that \eqref{VP} becomes
\begin{equation}
\tag{VP$_g$}
\label{VPg}
\left \{ \begin{aligned}
& \partial_{t}g^\alpha - \alt{\frac{q_\alpha}{m_\alpha}} t E(t,x+vt)\cdot \nabla_{x}g^\alpha+ \alt{\frac{q_\alpha}{m_\alpha}} E(t,x+vt) \cdot\nabla_{v}g^\alpha=0, \qquad \alpha = 1, ..., N\\
& E(t,x+vt) = \alt{\frac{1}{4\pi t^2}} \sum_{\alpha=1}^N  \alt{q_\alpha} \iint \frac{\xi}{|\xi|^3} \ g^\alpha \left (t, w, v - \xi + \frac{x-w}{t} \right ) \ dwd\xi
\end{aligned} \right .
\end{equation}
with
$$\rho(t,x+vt)= t^{-3} \sum_{\alpha=1}^N \alt{q_\alpha} \int_{\mathbb{R}^3} g^\alpha \left (t,w,v + \frac{x-w}{t} \right)\,dw$$
and the initial conditions $g^\alpha(0,x,v) = f^\alpha_0(x,v)$.
Then, as we will show, $g^\alpha$ possesses nicer properties than the original distribution function $f^\alpha$. Indeed, the spatial support of $g^\alpha$ grows significantly slower than that of $f^\alpha$ and, due to the cancellation of derivative terms, the $v$-derivatives of $g^\alpha$ can grow logarithmically in time (Lemma \ref{Dvg}), while the corresponding $v$-derivatives of $f^\alpha$ grow linearly at best (Lemma \ref{DEf}).
In particular, we note that the convolution in the electric field is now in the velocity variable rather than the spatial variable. 
Hence, as $t \to \infty$ one expects 
$$E(t,x+vt) \sim \alt{\frac{1}{4\pi t^2}} \sum_{\alpha=1}^N \alt{q_\alpha} \iint \frac{\xi}{|\xi|^3} g^\alpha(t, w, v-\xi) \ dw d\xi = t^{-2}\nabla_v \left (\Delta_v \right )^{-1} \mcP (t,v)$$
locally in $x$.
With this, it becomes clear that estimates of the spatial support of $g^\alpha$ and its velocity derivatives will be instrumental, while estimates of the velocity support 
may be less important.
Thus, our results may provide better tools to obtain \emph{a priori} estimates on the spatial growth (via either the support or moments) of $g^\alpha$ and velocity derivatives $\nabla_v g^\alpha$, which are the two main ingredients in the theorems.

The general structure of the argument is as follows. From the decay of the electric field provided by \eqref{Assumption}, it is straightforward to show that every velocity characteristic has a limit as $t \to \infty$. 
These limits are then used to identify the behavior of the spatial average of each particle distribution as $t \to \infty$. 
The corresponding limiting density $\mcP_\infty$ further induces an ambient electric field, $E_\infty$, and these quantities are used to identify the precise asymptotic behavior of the field, its derivatives, and the charge and current densities.
Furthermore, the limiting electric field enables us to identify the limiting values of modified spatial characteristics.
Finally, the behavior of characteristics is used to identify the trajectories in phase space along which the particle distribution scatters to a limit.


In the next section, we will establish some standard estimates of derivatives of the untranslated distribution functions $f^\alpha$ and their characteristics. 
Useful properties of the translated distribution functions $g^\alpha$ are also contained in Section \ref{Lemmas}. In Section \ref{sect:vellim} we show the convergence of the velocity characteristics as $t \to \infty$, prove some properties of these limiting functions, then show the convergence of the spatial averages as $t \to \infty$ and use these to define a limiting net velocity density. Section \ref{sect:fieldconv} is dedicated to establishing the precise asymptotic behavior of the field, its derivatives, and the associated densities. The convergence of modified spatial characteristics and the modified scattering of the distribution functions rely upon these terms and are contained within Section \ref{sect:modscatt}. The proofs of Theorems \ref{T1} and \ref{T2} and a result demonstrating the sharpness of these estimates for the non-neutral case are provided in Section \ref{sect:prop}. 

\section{Preliminaries}
\label{Lemmas}

\subsection{Estimates in the Original Reference Frame}
We begin by obtaining improved decay rates that follow from the main assumption \eqref{Assumption}.
First, we state a standard estimate on the gradient of the inverse Laplace operator, which will be used throughout.
\begin{lemma}[cf. \cite{Hormander}, Lemma 4.5.4]
\label{LE}
For any $\phi \in L^1(\bfR^3) \cap L^\infty(\bfR^3)$, there is $C > 0$ such that
$$\| \nabla (\Delta)^{-1} \phi \|_\infty \leq C \| \phi \|_1^{1/3}  \| \phi \|_\infty^{2/3}.$$
In particular, due to conservation of \alt{particle number}, which implies $\|\rho(t)\|_1 \leq C$ for all $t \geq 0$, we have
$$\| E(t) \|_\infty \leq C \| \rho(t) \|_\infty^{2/3}$$
for any $t \geq 0$.
\end{lemma}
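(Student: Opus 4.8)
The plan is to prove the pointwise bound directly from the explicit Riesz-kernel representation
$$\nabla(\Delta)^{-1}\phi(x) = \frac{1}{4\pi}\int_{\bfR^3}\frac{x-y}{|x-y|^3}\,\phi(y)\,dy,$$
whose kernel has magnitude $|x-y|^{-2}$, which is locally integrable on $\bfR^3$ but fails to be integrable at infinity. The natural device is therefore to split the convolution at a radius $R>0$ to be chosen later: write the integral as $I_{\mathrm{near}}$, over the ball $\{|x-y|<R\}$, plus $I_{\mathrm{far}}$, over its complement. On the near piece I would bound $|\phi(y)|\le\|\phi\|_\infty$ and use $\int_{|z|<R}|z|^{-2}\,dz = 4\pi R$ to obtain $|I_{\mathrm{near}}|\lesssim R\,\|\phi\|_\infty$. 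On the far piece I would instead estimate $|x-y|^{-2}\le R^{-2}$ on the domain of integration and bound what remains by $\|\phi\|_1$, giving $|I_{\mathrm{far}}|\lesssim R^{-2}\,\|\phi\|_1$. Since $x$ is arbitrary, this yields
$$\|\nabla(\Delta)^{-1}\phi\|_\infty \lesssim R\,\|\phi\|_\infty + R^{-2}\,\|\phi\|_1 \qquad \text{for every } R>0.$$

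The remaining step is to optimize the right-hand side over $R$. The two terms balance when $R^3\sim\|\phi\|_1/\|\phi\|_\infty$, so choosing $R = \left(\|\phi\|_1/\|\phi\|_\infty\right)^{1/3}$ makes both contributions of size $\|\phi\|_1^{1/3}\|\phi\|_\infty^{2/3}$, which is exactly the claimed inequality; this optimization is where the exponents $\tfrac13$ and $\tfrac23$ come from. (If $\|\phi\|_1=0$ or $\|\phi\|_\infty=0$ then $\phi=0$ a.e.\ and the estimate is trivial, so one may assume $\phi\not\equiv0$.) For the stated consequence about the field, one notes that for a $C_c^2$ solution each $\rho(t)$ belongs to $L^1(\bfR^3)\cap L^\infty(\bfR^3)$, while conservation of particle number gives $\|\rho(t)\|_1 \le \sum_{\alpha=1}^N |q_\alpha|\,\mcM^\alpha =: C$ uniformly in $t$; applying the inequality with $\phi=\rho(t)$ and absorbing this constant produces $\|E(t)\|_\infty \le C\,\|\rho(t)\|_\infty^{2/3}$.

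There is no genuine obstacle here: this is precisely the truncated-kernel interpolation recorded as Lemma 4.5.4 in \cite{Hormander}, and the argument is routine. The only points deserving a line of care are checking that the singular integral converges absolutely — which holds since $\phi\in L^1\cap L^\infty$ and $|z|^{-2}\in L^1_{\mathrm{loc}}(\bfR^3)$, so both $I_{\mathrm{near}}$ and $I_{\mathrm{far}}$ are finite — and disposing of the degenerate cases noted above; the optimization in $R$ is the only genuinely substantive computation, and it is elementary.
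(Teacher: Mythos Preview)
Your argument is correct and is exactly the standard truncated-kernel interpolation that the cited reference records. Note, however, that the paper does not supply its own proof of this lemma: it is stated with a citation to \cite{Hormander}, Lemma 4.5.4, and used without further justification. So there is no in-paper proof to compare against; your write-up simply fills in the routine details the author chose to omit.
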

This tool then allows us to address the rates of the field and density.
\alt{In particular, we will use the velocity support of $f^\alpha(t)$ to estimate the charge density.
Hence, we define 
$$\bS_v(t, x) = \overline{\left \{v \in \bfR^3 : f^\alpha(t,x,v) \neq 0 \right \} } $$
for any fixed $x \in \bfR^3$.}
\begin{lemma}
\label{XVsupp}
Assume \eqref{Assumption}.
Then, we have
$$\|\rho(t) \|_\infty \lesssim t^{-3} \qquad \mathrm{and} \qquad \|E(t) \|_\infty \lesssim t^{-2},$$
and
for every $\alpha = 1, ..., N$, $\tau \geq 0$, and $(x,v) \in \mcU$
$$\left | \mcV(t, \tau, x, v) \right | \lesssim 1 \quad \mathrm{and} \quad  \left | \mcX(t, \tau, x, v) \right | \lesssim t .$$
Furthermore, the velocity support of $f^\alpha$ satisfies
$$ \sup_{x \in \bfR^3} \left |\alt{\bS_v(t, x)} \right | \lesssim t^{-3}\alt{\ln^3(t)}. $$

\end{lemma}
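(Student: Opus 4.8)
The plan is to bootstrap from \eqref{Assumption} through the characteristics. First I would establish the velocity characteristic bound: since $\dot{\mcV}(t,\tau,x,v) = \frac{q_\alpha}{m_\alpha} E(t,\mcX(t,\tau,x,v))$, integrating in time and applying \eqref{Assumption} gives
$$\left| \mcV(t,\tau,x,v) - v \right| \le \frac{|q_\alpha|}{m_\alpha} \int_\tau^t \|E(s)\|_\infty \, ds \lesssim \int_\tau^t s^{-p}\,ds \lesssim 1$$
because $p > 1$. Since $(x,v) \in \mcU$ is compact, $|v| \lesssim 1$, so $|\mcV(t,\tau,x,v)| \lesssim 1$. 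Then $\dot{\mcX} = \mcV$ together with this bound yields $|\mcX(t,\tau,x,v)| \lesssim t$ by a second time integration. Note these bounds hold uniformly over the compact support $\bS_f(\tau)$ as well, since that set is fixed and bounded for each fixed $\tau$ (in particular $\tau = 0$).

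Next, the velocity support estimate. Fix $x \in \bfR^3$ and $t$ large. A point $v$ lies in $\bS_v(t,x)$ iff $f^\alpha(t,x,v) \neq 0$, which by the transport nature of \eqref{VP} (constancy of $f^\alpha$ along characteristics) means the backward characteristic through $(t,x,v)$ originates in $\bS_f(0)$. Writing $(x_0,v_0) = (\mcX(0,t,x,v), \mcV(0,t,x,v))$, we have $(x_0,v_0) \in \bS_f(0)$, and by the same integration of $\dot{\mcV}$ backward from $t$ to $0$,
$$|v - v_0| \lesssim \int_0^t \|E(s)\|_\infty\,ds.$$
The crude bound $\int_0^t s^{-p}\,ds \lesssim 1$ only gives $|\bS_v(t,x)| \lesssim 1$, which is not enough. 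To get the $t^{-3}\ln^3(t)$ rate I would instead split the integral at an intermediate time and use the improved density decay: by Lemma \ref{LE}, $\|E(t)\|_\infty \le C\|\rho(t)\|_\infty^{2/3}$, and once we have $\|\rho(t)\|_\infty \lesssim t^{-3}$ this gives $\|E(t)\|_\infty \lesssim t^{-2}$ — better than the assumed $t^{-p}$ since $p \le 2$. So the real structure is a coupled bootstrap: (i) a priori, $v_0$ ranges over a set of diameter $O(1)$, so $\bS_v(t,x)$ sits inside a fixed ball, giving $\|\rho(t)\|_\infty \le \int_{\bS_v(t,x)} \|f^\alpha(t)\|_\infty\,dv \lesssim |\bS_v(t,x)| \lesssim 1$; this is too weak. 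The key is that $x$ is fixed: the backward characteristics landing at the \emph{same} $x$ at time $t$ must have come from an $O(t^{-1})$-thin set of initial velocities, because two such characteristics with initial data $(x_0,v_0)$ and $(x_0',v_0')$ satisfy $x_0 + v_0 t + O(\int_0^t (t-s)\|E(s)\|_\infty ds) = x = x_0' + v_0' t + (\text{similar})$, and since $x_0, x_0'$ range over a bounded set while $\int_0^t(t-s)s^{-p}\,ds \lesssim t^{2-p} \lesssim t$ (using $p > 5/3$ more carefully with logarithms, this is where $\ln^3 t$ enters), one forces $|v_0 - v_0'| \lesssim t^{-1}\ln(t)$ or so in each coordinate, hence $|\bS_v(t,x)| \lesssim t^{-3}\ln^3(t)$. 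Feeding this back through $\|\rho(t)\|_\infty \lesssim \sup_x |\bS_v(t,x)| \lesssim t^{-3}\ln^3 t$ and Lemma \ref{LE} closes the loop and yields $\|E(t)\|_\infty \lesssim t^{-2}$.

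The main obstacle is making the bootstrap in the previous paragraph rigorous and self-consistent: the velocity support bound, the density bound, and the field bound are mutually dependent, and one must run them in the correct order with a continuity/maximal-time argument so as not to assume what is being proved. Concretely, I would define $\mu(t)$ or a similar quantity controlling the spatial support of $g^\alpha$, assume a provisional bound on an interval $[T_0, T^*]$, derive strictly better bounds on the same interval using only \eqref{Assumption} and the provisional bound, and conclude $T^* = \infty$. The logarithmic corrections arise from the borderline integral $\int_1^t (t-s)^{-1}\cdot(\cdot)\,ds$-type terms when $p$ is near $5/3$, and tracking their exact power ($\ln^3$) requires care but no new idea. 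Everything else — the two integrations of the characteristic ODEs and the estimate $\|\rho(t)\|_\infty \le \|f^\alpha(t)\|_\infty \cdot \sup_x|\bS_v(t,x)| = \|f^\alpha_0\|_\infty \cdot \sup_x|\bS_v(t,x)|$ using conservation of $\|f^\alpha\|_\infty$ along characteristics — is routine.
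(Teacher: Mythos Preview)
Your diameter-of-velocity-support argument (two backward characteristics landing at the same $x$) is exactly the mechanism the paper uses, and your bounds on $\mcV$ and $\mcX$ are fine. The gap is in the bootstrap you propose to upgrade the field decay. From \eqref{Assumption} alone the diameter argument gives
\[
|v_1 - v_2| \;\lesssim\; t^{-1}\bigl(1 + (1+t)^{2-p}\bigr) \;\lesssim\; t^{\,1-p},
\]
hence $|\bS_v(t,x)| \lesssim t^{3(1-p)}$ and $\|\rho(t)\|_\infty \lesssim t^{3(1-p)}$ with $3(p-1)\in(2,3]$; this is \emph{not} yet $t^{-3}\ln^3 t$. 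Applying Lemma~\ref{LE} then yields $\|E(t)\|_\infty \lesssim t^{2(1-p)}$, and for $p<2$ the new exponent $2(p-1)$ is \emph{strictly smaller} than $p$ (the map $p\mapsto 2(p-1)$ has its fixed point at $p=2$ and moves $p<2$ downward). So the self-consistent continuity/iteration scheme you describe cannot close: each pass through the loop degrades the field exponent rather than improving it, and no maximal-time argument will fix this.

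The paper resolves this by invoking an external result of Schaeffer: once $\|\rho(t)\|_\infty \lesssim t^{-q}$ for some $q\in(2,3]$, a separate (nontrivial) argument upgrades this directly to $\|\rho(t)\|_\infty \lesssim t^{-3}$, whence $\|E(t)\|_\infty \lesssim t^{-2}$ by Lemma~\ref{LE}. Only \emph{after} this step does one re-run the diameter argument with the improved rate $\|E\|_\infty \lesssim t^{-2}$, for which $\int_0^t\int_s^t (1+\tau)^{-2}\,d\tau\,ds \lesssim \ln t$, giving $|v_1-v_2|\lesssim t^{-1}\ln t$ and hence $|\bS_v(t,x)|\lesssim t^{-3}\ln^3 t$. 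The logarithm comes from the borderline integral at $p=2$, not from $p$ near $5/3$ as you suggest. Without Schaeffer's theorem (or an equivalent replacement), your outline does not reach the stated conclusions when $p<2$.
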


\begin{proof}
We begin by showing that $\alt{\bS_v(t,x)}$ ultimately lies in a ball of diameter $\mathcal{O}\left (t^{1-p} \right )$ for any $x \in \bfR^{3}$.
Suppose $(x,v_1), (x,v_2) \in \alt{\bS_f(t)}$ at some time $t \geq 1$. Integrating the characteristic equations \eqref{char} and using the bounded support of $f_0$ along with \eqref{Assumption}, we find
\begin{eqnarray*}
C & \geq & \left |\mcX(0,t, x, v_1) - \mcX(0,t,x,v_2) \right |\\
& = & \left | (v_1 - v_2) t + \alt{\frac{q_\alpha}{m_\alpha}}\int_0^t \int_s^t \biggl(E(\tau, \mcX(\tau, t, x, v_1))  - E(\tau, \mcX(\tau, t, x, v_2)) \biggr ) \ d\tau ds \right | \\
& \geq & \left | v_1 - v_2 \right | t - \alt{C}\int_0^t \int_s^t \Vert E(\tau) \Vert_\infty \ d\tau ds\\
& \geq & \left | v_1 - v_2 \right |t - C(1+t)^{2-p}
\end{eqnarray*}
for $t \geq 1$.
Rearranging the inequality produces
$$|v_1 - v_2| \leq Ct^{-1}(1 + (1+t)^{2-p}) \leq C (1+t)^{1-p}$$
for $t \geq 2$.
Hence, for any $t \geq 2$ and $x \in \bfR^3$ there are $v_0 \in \mathbb{R}^3$ and $C  > 0$ such that
$$\alt{\bS_v(t,x)} \subseteq \left \{ v \in \mathbb{R}^3 : | v - v_0| \leq C (1+t)^{1-p} \right \},$$
and thus
$$\sup_{x \in \bfR^3}\left | \alt{\bS_v(t,x)}  \right | \leq C(1+t)^{3(1-p)}$$
for any $\alpha = 1, ..., N$ and  $t \geq 2$.
Therefore, we find
$$\| \rho(t)\|_\infty  \lesssim  \sup_{x \in \mathbb{R}^3} \sum_{\alpha=1}^N \left | \int  f^\alpha(t,x,v) \ dv \right |
\lesssim \sum_{\alpha=1}^N \Vert f^\alpha_0 \Vert_\infty  \sup_{x \in \mathbb{R}^3} \left | \alt{\bS_v(t,x)} \right |
\lesssim t^{3(1-p)},$$
where $3(1-p) < -2$ as $p > 5/3$.
Next, we apply a recent result of Schaeffer to obtain the stated decay rate of the charge density: 
\begin{theorem*}[Schaeffer \cite{Jacknew}]
Assume there is $q\in(2,3]$ such that 
$$\|\rho(t)\|_{\infty}\lesssim t^{-q}.$$
Then, we have
$$\|\rho(t)\|_{\infty}\lesssim t^{-3}.$$
\end{theorem*}
With this, we also obtain 
an improved decay estimate on the field, namely 
$$\|E(t) \|_\infty \lesssim t^{-2},$$
which follows from Lemma \ref{LE}.
The estimate on velocity characteristics 
follows directly from this as 
$$\left |\mcV(t,\tau, x, v) \right | = \left | v + \alt{\frac{q_\alpha}{m_\alpha}}\int_\tau^t E(s, \mcX(s, \tau, x, v)) ds \right | \lesssim 1 + \int_\tau^t (1+s)^{-2} \ ds  \lesssim 1$$
for $\tau \geq 0$ and $(x,v) \in \mcU$,
and the improved decay rate for the field is merely inserted into the previous argument to deduce the claimed behavior of the velocity support. 
Finally, the estimate on the spatial characteristics of $f^\alpha$ follows by merely integrating  \eqref{char} and using the velocity bound.

\end{proof}

In addition, the decay of the field and charge density lead directly to estimates of field derivatives and derivatives of untranslated distribution functions.
\begin{lemma}
\label{DEf}
For any $\alpha = 1, ..., N$ we have 
$$\|\nabla_x E(t) \|_\infty \lesssim t^{-3}\ln(t), \qquad \|\nabla_x f^\alpha(t) \|_\infty \lesssim 1, \qquad \mathrm{and} \qquad \|\nabla_v f^\alpha(t) \|_\infty \lesssim t.$$
\end{lemma}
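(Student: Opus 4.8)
The plan is to establish the three estimates simultaneously, since they are coupled: the bound on $\|\nabla_xE(t)\|_\infty$ requires control of $\nabla_x\rho(t)$, hence of $\nabla_xf^\alpha(t)$, whereas the bounds on $\nabla_{x,v}f^\alpha(t)$ come from a linear variational ODE whose coefficient is $\nabla_xE(t)$. The structural fact that decouples the system is that the field-gradient estimate loses only a \emph{logarithm} relative to $\|\rho(t)\|_\infty$; consequently any crude a priori bound on $\nabla_xf^\alpha(t)$ (at most polynomial, as furnished by the global theory \cite{LP,Pfaf,Schaeffer}, or handled by a standard continuity argument) already forces $\|\nabla_xE(t)\|_\infty\lesssim t^{-3}\ln(t)$, after which the remaining bounds close unconditionally.

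\emph{Field gradient.} First I would write $\partial_iE_j(t,x)$ as the principal-value convolution of $\rho(t)$ against the Calder\'on--Zygmund kernel $K_{ij}(z)=\tfrac{1}{4\pi}(3z_iz_j-|z|^2\delta_{ij})|z|^{-5}$, which is homogeneous of degree $-3$ with vanishing mean over spheres, plus the local term $\tfrac13\delta_{ij}\rho(t,x)$. Splitting the integral at radius $\delta>0$ and using the mean-zero property to replace $\rho(t,y)$ by $\rho(t,y)-\rho(t,x)$ on $\{|x-y|<\delta\}$ produces a contribution $\lesssim\delta\,\|\nabla_x\rho(t)\|_\infty$; on $\{|x-y|>\delta\}$, bounding $|K|$ by $|z|^{-3}$ and using that $\rho(t)$ is supported in a ball of radius $R(t)\lesssim t$ (from the characteristic bound $|\mcX|\lesssim t$ of Lemma~\ref{XVsupp}) gives $\lesssim\|\rho(t)\|_\infty\ln^+(R(t)/\delta)$. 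Optimizing $\delta\sim\|\rho(t)\|_\infty/\|\nabla_x\rho(t)\|_\infty$ yields
$$\|\nabla_xE(t)\|_\infty\ \lesssim\ \|\rho(t)\|_\infty\left(1+\ln^+\frac{R(t)\,\|\nabla_x\rho(t)\|_\infty}{\|\rho(t)\|_\infty}\right).$$
Since $\|\nabla_x\rho(t)\|_\infty\le\sum_\alpha|q_\alpha|\,\|\nabla_xf^\alpha(t)\|_\infty\,\sup_x|\bS_v(t,x)|\lesssim\|\nabla_xf^\alpha(t)\|_\infty\,t^{-3}\ln^3(t)$ and $\|\rho(t)\|_\infty\lesssim t^{-3}$, $R(t)\lesssim t$ by Lemma~\ref{XVsupp}, the argument of the logarithm is $\lesssim t\ln^3(t)\,\|\nabla_xf^\alpha(t)\|_\infty$, so any polynomial bound on $\|\nabla_xf^\alpha(t)\|_\infty$ gives $\|\nabla_xE(t)\|_\infty\lesssim t^{-3}\ln(t)$.

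\emph{Derivatives of $f^\alpha$.} Using $f^\alpha(t,x,v)=f^\alpha_0(\mcX(0,t,x,v),\mcV(0,t,x,v))$, the chain rule reduces the task to bounding $\partial_{(x,v)}\mcX(0,t,\cdot)$ and $\partial_{(x,v)}\mcV(0,t,\cdot)$. Differentiating \eqref{char}, each of these solves the linear system $\ddot\Xi(s)=\tfrac{q_\alpha}{m_\alpha}(\nabla_xE)(s,\mcX(s,t,x,v))\,\Xi(s)$ with data at $s=t$ equal to $(\Xi,\dot\Xi)=(I,0)$ for the $x$-derivatives and $(0,I)$ for the $v$-derivatives. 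Integrating twice from $s=t$ down to $s=0$ and using $\int_1^\infty s\,\|\nabla_xE(s)\|_\infty\,ds<\infty$ (which holds because $s\cdot s^{-3}\ln(s)=s^{-2}\ln(s)$ is integrable, together with continuity on bounded time intervals), a Gr\"onwall estimate gives $\|\partial_x\mcX(0,t,\cdot)\|,\ \|\partial_x\mcV(0,t,\cdot)\|,\ \|\partial_v\mcV(0,t,\cdot)\|\lesssim1$ and $\|\partial_v\mcX(0,t,\cdot)\|\lesssim t$, the single factor of $t$ arising from the inhomogeneous $(0,I)$ data for $\partial_v\mcX$. The chain rule then gives $\|\nabla_xf^\alpha(t)\|_\infty\lesssim1$ and $\|\nabla_vf^\alpha(t)\|_\infty\lesssim t$; inserting $\|\nabla_xf^\alpha(t)\|_\infty\lesssim1$ into the displayed inequality yields $\|\nabla_xE(t)\|_\infty\lesssim t^{-3}\ln(t)$.

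I expect the main obstacle to be keeping the coupling clean: the logarithm in the field estimate must be arranged to depend only on the \emph{ratio} $R(t)\|\nabla_x\rho(t)\|_\infty/\|\rho(t)\|_\infty$ so that it absorbs polynomial growth, and one must verify that the variational ODE produces at most \emph{linear} growth of the characteristic derivatives, which rests on the borderline but convergent integral $\int_1^\infty s^{-2}\ln(s)\,ds$. Given these, the bootstrap is routine: one pass (two, if only an exponential a priori bound is available) through the two estimates closes the loop.
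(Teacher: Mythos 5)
Your proposal is correct and follows essentially the same route as the paper: a Calder\'on--Zygmund splitting yielding a bound of the form $\|\nabla_x E(t)\|_\infty \lesssim \|\rho(t)\|_\infty\left(1+\ln^*(\,\cdots)\right)$ with only \emph{logarithmic} dependence on $\|\nabla_x\rho(t)\|_\infty$, combined with $\|\rho(t)\|_\infty\lesssim t^{-3}$ and the bounded velocity support from Lemma~\ref{XVsupp}, followed by a Gr\"onwall closure of the coupled derivative system. Your treatment of $\nabla_{x}f^\alpha$ and $\nabla_vf^\alpha$ via the chain rule and the variational ODE for the backward flow is the same computation as the paper's Duhamel formulas for $\partial_{x_k}f^\alpha$ and $\partial_{v_k}f^\alpha$ along characteristics (and mirrors the argument of Lemma~\ref{LDchar}); the identification of the borderline integrability $\int_1^\infty s\,\|\nabla_xE(s)\|_\infty\,ds<\infty$ and of the $(0,\mathbb{I})$ data as the source of the single factor of $t$ is exactly right. (When you substitute the upper bound $\|\rho(t)\|_\infty\lesssim t^{-3}$ into a quantity where $\|\rho(t)\|_\infty$ also appears in the denominator of the logarithm, you are implicitly using that $s\mapsto s\left(1+\ln^+(A/s)\right)$ is increasing; the paper makes this observation explicitly and you should too.)

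The one soft spot is the appeal to a \emph{polynomial} a priori bound on $\|\nabla_xf^\alpha(t)\|_\infty$ ``furnished by the global theory'': the cited existence results do not provide polynomial-in-time derivative bounds (their natural outputs are exponential or worse), and with only an $e^{Ct}$ input one pass yields $\|\nabla_xE(t)\|_\infty\lesssim t^{-2}$, for which $\int s\,\|\nabla_xE(s)\|_\infty\,ds$ still diverges, so your bootstrap needs at least one more iteration than advertised and, more importantly, a justified starting point. The clean fix --- and what the paper does --- is to make the closure self-contained: set $\mcD(t)=C+\sup_{s\le t}\max_\alpha\|\nabla_xf^\alpha(s)\|_\infty$, note $\|\nabla_vf^\alpha(t)\|_\infty\le(1+t)\mcD(t)$, and deduce
$$\mcD(t)\le C+C\int_0^t(1+s)^{1-p}\bigl(1+\ln\mcD(s)\bigr)\mcD(s)\,ds$$
for some $p\in(2,3)$, which a logarithmic Gr\"onwall inequality closes with a uniform bound in a single pass. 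This is precisely the ``continuity argument'' you mention in passing; it should replace the external a priori bound rather than supplement it.
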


\begin{proof}
We will utilize a standard estimate (cf. \cite[pp. 122-123]{Glassey}) on the gradient of the field, namely
\begin{equation}
\label{gradEstd}
\|\nabla_x E(t) \|_\infty \lesssim \left ( 1 + \ln^* \left (\frac{\|\nabla_x \rho(t)\|_\infty}{\|\rho(t) \|_\infty^{4/3}} \right ) \right ) \| \rho(t) \|_\infty
\end{equation}
where
$$\ln^*(s) =
\begin{cases}
0, & \mathrm{if} \ s \leq 1\\
\ln(s), & \mathrm{if} \ s \geq 1.
\end{cases}
$$
We note that this bound is increasing in the contribution of $\|\rho(t)\|_\infty$ and using Lemma \ref{XVsupp} in \eqref{gradEstd} yields
$$
\|\nabla_x E(t) \|_\infty \lesssim \left ( 1 + \ln^* \left (t^4 \|\nabla_x \rho(t)\|_\infty  \right ) \right )t^{-3}.
$$
Due to Lemma \ref{XVsupp}, the velocity support of $f^\alpha$ is uniformly bounded so that
$$\left |\nabla_x \rho(t,x) \right | \lesssim \sum_{\alpha = 1}^N \int \left |\nabla_x f^\alpha(t,x,v) \right | \ dv \lesssim \max_{\alpha =1, ..., N} \|\nabla_x f^\alpha(t) \|_\infty,$$
and thus
\begin{equation}
\label{DEdecayln}
\|\nabla_x E(t) \|_\infty \leq C \left ( 1 + \ln^* \left ( \max_{\alpha =1, ..., N} \|\nabla_x f^\alpha(t)\|_\infty (1+t)^4 \right ) \right ) (1+t)^{-3}
\end{equation}
for $t \geq 0$.
This further yields 
\begin{equation}
\label{DEdecay}
\|\nabla_x E(t) \|_\infty \leq C\left ( 1 + \ln^* \left (\max_{\alpha =1, ..., N} \|\nabla_x f^\alpha(t)\|_\infty \right ) \right ) (1+t)^{-p}
\end{equation}
for $t \geq 0$ and any $p \in (2,3)$ where $p$ can be taken as close to $3$ as desired.

Next, we estimate derivatives of $f^{\alt{\alpha}}$ in order to close the argument.
Taking derivatives in the Vlasov equation, we find 
$$
\begin{gathered}
\partial_{v_k} f^\alpha(t,x,v) = \partial_{v_k} f^\alpha_0(\mcX(0), \mcV(0)) - \int_0^t \partial_{x_k} f^\alpha(s, \mcX(s), \mcV(s)) \ ds,\\
\partial_{x_k} f^\alpha(t,x,v) = \partial_{x_k} f^\alpha_0( \mcX(0), \mcV(0)) - \int_0^t \partial_{x_k}E(s, \mcX(s)) \cdot \nabla_v f^\alpha(s, \mcX(s), \mcV(s)) \ ds.
\end{gathered}
$$
The first equation implies
$$ \| \partial_{v_k} f^\alpha(t) \|_\infty \leq  \| \partial_{v_k} f^\alpha_0 \|_\infty + t  \| \partial_{x_k} f^\alpha(t) \|_\infty \leq (1+t) \mcD(t)$$
for $k = 1, 2, 3$ where
$$\mcD(t) = e^2 + \max_{\alpha =1, ..., N} \|\nabla_v f^\alpha_0 \|_\infty + \sup_{s \in [0,t]} \max_{\alpha =1, ..., N} \| \nabla_x f^\alpha(s) \|_\infty.$$
Using \eqref{DEdecay} and this estimate, the second equation yields for $k = 1, 2, 3$
\begin{eqnarray*}
\| \partial_{x_k} f^\alpha(t) \|_\infty & \leq &  \| \partial_{x_k} f^\alpha_0 \|_\infty + \int_0^t \|\nabla_x E(s) \|_\infty  \| \nabla_v f^\alpha(s) \|_\infty \ ds\\
& \leq & C + C\int_0^t \left ( 1 + \ln^* \left (\max_{\alpha =1, ..., N} \|\nabla_x f^\alpha(s)\|_\infty \right ) \right ) (1+s)^{-p} \| \nabla_v f^\alpha(s) \|_\infty \ ds\\
& \leq & C + C\int_0^t (1+s)^{1-p} \left ( 1 + \ln(\mcD(s)) \right ) \mcD(s) \ ds.
\end{eqnarray*}
Summing over $k$ and taking the maximum over $\alpha$ and the supremum in $t$ ultimately provides
$$\mcD(t) \leq C + C\int_0^t (1+s)^{1-p} \left ( 1 + \ln(\mcD(s)) \right ) \mcD(s) \ ds.$$
Invoking a variant of Gronwall's inequality then yields
$$\mcD(t) \leq C\exp \left ( \exp \left ( \int_0^t (1+s)^{1-p} \ ds \right ) \right ) \leq C$$
for all $t \geq 0$ as $p > 2$. 
As $\mcD(t)$ is bounded, we find
$$\|\nabla_x f^\alpha(t) \|_\infty \lesssim 1 \qquad \mathrm{and} \qquad \|\nabla_v f^\alpha(t) \|_\infty \leq (1+t)\mcD(t) \lesssim t,$$
and the final two conclusions follow.
Additionally, the first conclusion is obtained upon using the bound on $\|\nabla_x f^\alpha(t) \|_\infty$ in \eqref{DEdecayln}.
\end{proof}

Estimates on the behavior of the derivatives of characteristics follow, as well.

\begin{lemma}
\label{LDchar}
For $\alpha = 1, ..., N$, $\tau$ sufficiently large, $t \geq \tau$, and $(x, v) \in \mcU$, we have
\begin{align*}
& \left | \frac{\partial \mcX}{\partial v}(t,\tau, x, v) \right | \lesssim t,  \hspace{1in}
\left | \frac{\partial \mcV}{\partial v}(t,\tau, x, v) \right | \lesssim 1,\\
&\left | \frac{\partial \mcX}{\partial x}(t,\tau, x, v) \right | \lesssim t, \hspace{1in}
\left | \frac{\partial \mcV}{\partial x}(t,\tau, x, v) \right | \lesssim 1.
\end{align*}
\end{lemma}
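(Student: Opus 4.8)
The plan is to differentiate the integral form of the characteristic equations \eqref{char} with respect to the initial data $x$ and $v$, and then close a Gronwall-type estimate using the decay of $\nabla_x E$ from Lemma \ref{DEf}. Writing $\mcX(s) = \mcX(s,\tau,x,v)$ and $\mcV(s) = \mcV(s,\tau,x,v)$, integration of \eqref{char} from $\tau$ to $t$ gives
$$\mcV(t) = v + \frac{q_\alpha}{m_\alpha}\int_\tau^t E(s,\mcX(s))\,ds, \qquad \mcX(t) = x + v(t-\tau) + \frac{q_\alpha}{m_\alpha}\int_\tau^t\int_\tau^s E(\sigma,\mcX(\sigma))\,d\sigma\,ds.$$
Differentiating in $v$ (the $x$-derivative is entirely analogous, with the inhomogeneous term $v(t-\tau)$ replaced by $x$, producing the $\partial\mcX/\partial x$ estimate in place of $\partial\mcX/\partial v \sim t$) yields
$$\frac{\partial\mcV}{\partial v}(t) = I + \frac{q_\alpha}{m_\alpha}\int_\tau^t \nabla_x E(s,\mcX(s))\,\frac{\partial\mcX}{\partial v}(s)\,ds, \qquad \frac{\partial\mcX}{\partial v}(t) = (t-\tau)I + \frac{q_\alpha}{m_\alpha}\int_\tau^t\int_\tau^s \nabla_x E(\sigma,\mcX(\sigma))\,\frac{\partial\mcX}{\partial v}(\sigma)\,d\sigma\,ds.$$

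Next I would set $H(t) = \sup_{\tau \le s \le t}\, s^{-1}\bigl|\tfrac{\partial\mcX}{\partial v}(s)\bigr|$ and estimate. From the second identity, using $\|\nabla_x E(\sigma)\|_\infty \lesssim \sigma^{-3}\ln(\sigma)$ (Lemma \ref{DEf}) and $|\tfrac{\partial\mcX}{\partial v}(\sigma)| \le \sigma H(t)$, the double integral is bounded by $C\,H(t)\int_\tau^t\int_\tau^s \sigma^{-2}\ln(\sigma)\,d\sigma\,ds \lesssim H(t)\int_\tau^t \tau^{-1}\ln(\tau)\,ds \lesssim t\,\tau^{-1}\ln(\tau)\,H(t)$. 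Dividing by $t$, we obtain $H(t) \le C\bigl(1 + \tau^{-1}\ln(\tau)\,H(t)\bigr)$, and for $\tau$ sufficiently large the coefficient $C\tau^{-1}\ln(\tau) < \tfrac12$, so $H(t)$ is absorbed on the left and $H(t) \lesssim 1$, i.e. $|\tfrac{\partial\mcX}{\partial v}(t)| \lesssim t$. Feeding this back into the first identity gives $|\tfrac{\partial\mcV}{\partial v}(t)| \le 1 + C\int_\tau^t \sigma^{-3}\ln(\sigma)\cdot\sigma\,d\sigma \lesssim 1$. The same computation with $x$-derivatives, where $\tfrac{\partial\mcX}{\partial x}(\tau) = I$ rather than $0$ and the inhomogeneous term is $I$ (constant, not growing), gives $|\tfrac{\partial\mcX}{\partial x}(t)| \lesssim t$ and $|\tfrac{\partial\mcV}{\partial x}(t)| \lesssim 1$ by the identical absorption argument.

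The main obstacle — really the only subtle point — is that a naive Gronwall applied to the $\mcX$-derivative would pick up the kernel $\int_\tau^t\int_\tau^s \sigma^{-3}\ln\sigma\,d\sigma\,ds$, which is integrable but one must be careful that after dividing by the growing inhomogeneous factor $t$ the self-consistent quantity is $t^{-1}\partial\mcX/\partial v$ rather than $\partial\mcX/\partial v$ itself; choosing the right normalization $H(t)$ and exploiting that the decay of $\nabla_x E$ is fast enough ($t^{-3}\ln t$, hence the double time integral contributes only a small constant once $\tau$ is large) is what makes the absorption work without any exponential blow-up. One should also confirm that $\mcX(s,\tau,x,v)$ stays in the spatial region where the bound on $\|\nabla_x E\|_\infty$ has already been established for all $s\in[\tau,t]$, which is immediate from $|\mcX(s)| \lesssim s$ (Lemma \ref{XVsupp}) together with the fact that the Lemma \ref{DEf} bound is an $L^\infty$-in-$x$ estimate, so no localization is actually needed.
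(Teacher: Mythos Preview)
Your proposal is correct and follows essentially the same route as the paper: differentiate the integral form of \eqref{char}, then close a bootstrap/absorption argument using the decay $\|\nabla_x E(s)\|_\infty \lesssim s^{-3}\ln s$ (the paper uses the slightly cruder $s^{-p}$ with $p\in(2,3)$, which makes no difference). The only cosmetic distinction is that the paper runs a continuity argument with an explicit bootstrap constant $\delta$ and the normalization $(s-\tau)$, whereas you normalize by $s^{-1}$ and absorb the finite quantity $H(t)$ directly; both are standard and equivalent here.
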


\begin{proof}
Taking a $v$ derivative in \eqref{char} yields
\begin{equation*}
\left \{
\begin{aligned}
& \frac{\partial \ddot{\mathcal{X}}^\alpha}{\partial v}(t)= \alt{\frac{q_\alpha}{m_\alpha}}\nabla_x E(t, \mcX(t)) \frac{\partial \mcX}{\partial v}(t),\\
& \frac{\partial \mcX}{\partial v}(\tau) = 0 ,\qquad \frac{\partial \dot{\mathcal{X}}^\alpha}{\partial v}(\tau) = \mathbb{I}.
\end{aligned}
\right.
\end{equation*}
Upon integrating, we can rewrite the solution of this differential equation as
$$\frac{\partial \mcX}{\partial v}(t) = (t - \tau)\mathbb{I} + \alt{\frac{q_\alpha}{m_\alpha}}\int_\tau^t (t-s) \nabla_x E(s, \mcX(s)) \frac{\partial \mcX}{\partial v}(s) ds$$
so that by Lemma \ref{DEf} we have for any $p \in (2,3)$
$$\left |\frac{\partial \mcX}{\partial v}(t) \right | \leq  t - \tau  + C\int_\tau^t \frac{t-s}{(1+s)^p} \left | \frac{\partial \mcX}{\partial v}(s) \right | ds.$$
Now, we fix some $\delta \in [4,6]$ and define
$$T_0(\tau) = \sup \left \{ T \geq \tau : \left | \frac{\partial \mcX}{\partial v}(s) \right | \leq \delta (s - \tau) \ \mathrm{for \ all} \ s \in [\tau, T] \right \}.$$
Note that $T_0 > \tau$ due to the initial conditions. 
Then, estimating for $t \in [\tau, T_0)$, we have
$$\left |\frac{\partial \mcX}{\partial v}(t) \right | \leq  t - \tau  + C\delta \int_\tau^t \frac{(t-s)(s-\tau)}{(1+s)^p} ds.$$
Integrating by parts twice, we find
$$\left |\frac{\partial \mcX}{\partial v}(t) \right | \leq  t - \tau + C\delta (1+\tau)^{2-p} (t-\tau) \alt{\leq (1 + C(1+\tau)^{2-p})(t-\tau)} \leq 2 (t-\tau) \leq \frac{1}{2}\delta (t-\tau)$$
for $\tau$ sufficiently large. Hence, we find $T_0 =\infty$ and the first estimate on $v$-derivatives follows.
The second estimate is directly implied by the first as
$$\frac{\partial \dot{\mathcal{V}}^\alpha}{\partial v}(t)=\alt{\frac{q_\alpha}{m_\alpha}}\nabla_x E(t, \mcX(t)) \frac{\partial \mcX}{\partial v}(t)\alt{.}$$
\alt{Hence,} for $\tau$ sufficiently large
$$\left | \frac{\partial \mcV}{\partial v}(t) \right | \lesssim 1 + \int_\tau^t \alt{(1+s)^{-p}} (s-\tau) \ ds \lesssim 1$$
\alt{for any $p \in (2,3)$ by Lemma \ref{DEf}.}

We estimate $x$ derivatives similarly noting that only the initial conditions change so that 
\begin{equation*}
\left \{
\begin{aligned}
& \frac{\partial \ddot{\mathcal{X}}^\alpha}{\partial x}(t)= \alt{\frac{q_\alpha}{m_\alpha}}\nabla_x E(t, \mcX(t)) \frac{\partial \mcX}{\partial x}(t),\\
& \frac{\partial \mcX}{\partial x}(\tau) = \mathbb{I}  ,\qquad \frac{\partial \dot{\mathcal{X}}^\alpha}{\partial x}(\tau) = 0.
\end{aligned}
\right.
\end{equation*}
As before, we can rewrite the solution as
$$\frac{\partial \mcX}{\partial x}(t) = \mathbb{I} + \alt{\frac{q_\alpha}{m_\alpha}}\int_\tau^t (t-s) \nabla_x E(s, \mcX(s)) \frac{\partial \mcX}{\partial x}(s) ds$$
with
$$\frac{\partial \mcV}{\partial x}(t) = \alt{\frac{q_\alpha}{m_\alpha}} \int_\tau^t \nabla_x E(s, \mcX(s)) \frac{\partial \mcX}{\partial x}(s) ds.$$
Applying analogous tools will yield the stated results. Indeed, fixing $\delta \in [4,6]$ as before and defining
$$T_1(\tau) = \sup \left \{ T \geq \tau : \left | \frac{\partial \mcX}{\partial x}(s) \right | \leq 1 + \delta (1+s) \ \mathrm{for \ all} \ s \in [\tau, T] \right \},$$
we estimate for $t \in [\tau, T_1)$ and integrate by parts to find
\begin{eqnarray*}
\left |\frac{\partial \mcX}{\partial x}(t) \right | &  \leq &  1  + C \int_\tau^t \frac{t-s}{(1+s)^p} \left ( 1 + \delta (1+s) \right )ds\\
& \leq &  1  + C(1+ \tau)^{2-p}(t-\tau) \left ( (1+\tau)^{-1} + \delta \right )\\
& \leq &  1  + C\delta(1+ \tau)^{2-p}(t-\tau)\\
& \leq &  1  + \frac{1}{2} \delta (1 + t)
\end{eqnarray*}
for $\tau$ sufficiently large as $p > 2$. It follows that $T_1 = \infty$, which further implies the linear growth rate of $\frac{\partial \mcX}{\partial x}(t)$. Using this within the equation for $\frac{\partial \mcV}{\partial x}(t)$ then provides the stated uniform-in-time bound as before.

\end{proof}

\subsection{Properties of Translated Distribution Functions}

Prior to stating the lemmas, we first introduce some notation relating to the translated distribution functions.
As mentioned in the introduction, we let
$$g^\alpha(t,x,v) = f^\alpha(t,x+vt, v)$$
and because the translation alters the spatial characteristics of this system, we further define
\begin{equation}
\label{gcharalt}
\mcY(t,\tau, x,v) = \mcX(t,\tau, x, v) - t \mcV(t, \tau, x, v)
\end{equation}
so that
\begin{equation}
\label{gchar}
 \dot{\mcY}(t) = - \alt{\frac{q_\alpha}{m_\alpha}} t E(t, \mcY(t) + t \mcV(t))
\end{equation}
with $\mcY(\tau) = x - v\tau$. 
In addition, note that $\Vert g^\alpha(t) \Vert_\infty \leq \Vert f_0^\alpha \Vert_\infty$ for all $t \geq 0$.
For $t \geq 0$ define the support of $g^\alpha(t)$ by
$$\alt{\bS_g(t)} = \overline{\left \{ (x,v) \in \R^6 : g^\alpha(t,x,v) \neq 0 \right \}}\alt{.}$$
\alt{We note that the measure of the velocity support of $g^\alpha$ remains uniformly bounded due to Lemma \ref{XVsupp}, i.e.}
$$\sup_{x \in \bfR^3} \left |\bS_v(t, x) \right | \lesssim 1.$$

As our approach relies heavily upon the growth of the spatial support and velocity derivatives of $g^\alpha$, 
we further define 
\alt{the set
$$\bS_x(t,v) = \overline{\left \{x \in \bfR^3 : g^\alpha(t,x,v) \neq 0 \right \}}$$
for any fixed $v \in \bfR^3$ and} 
the useful quantities
$$\alt{\mu(t) = \max \left \{ 1, \max_{\alpha = 1, ..., N} \sup_{v\in \bfR^3} \left | \alt{\bS_x(t,v)} \right | \right \}}$$
and
$$\mcG(t) =  \max \left \{ 1,\max_{\alpha = 1, ..., N}  \| \nabla_v g^\alpha (t) \|_\infty \right \}.$$
With suitable decay of the field and its derivatives, we now study the behavior of the translated characteristics and establish a measure-preserving property as for the characteristics of \eqref{VP}.

\begin{lemma}
\label{MPP}
For every $\alpha = 1, ..., N$ and $t \geq 0$, the mapping $(x,v) \mapsto (\mcY(t), \mcV(t))$ defined by the characteristics satisfying
\begin{equation}
\label{gcharsys}
\left \{
\begin{aligned}
&\dot{\mcY}(t)= - \alt{\frac{q_\alpha}{m_\alpha}} t E(t, \mcY(t) + t \mcV(t))\\
&\dot{\mcV}(t)= \alt{\frac{q_\alpha}{m_\alpha}} E(t, \mcY(t) + t \mcV(t))
\end{aligned}
\right.
\end{equation}
with initial conditions
$\mcY(\tau, \tau, x, v) = x-v\tau$ and
$\mcV(\tau, \tau, x, v) = v,$
is a measure-preserving diffeomorphism. 
Hence, $| \alt{\bS_g(t)} | = | \alt{\bS_g(0)} |$
for every $\alpha = 1, ..., N$ and $t \geq 0$.
\end{lemma}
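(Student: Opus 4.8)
The plan is to realize the map $(x,v)\mapsto(\mcY(t),\mcV(t))$ as the composition of the volume-preserving shear $(x,v)\mapsto(x-v\tau,v)$ with the flow of the non-autonomous system \eqref{gcharsys}, and then to show that this flow preserves Lebesgue measure by a Liouville-type computation. First I would record that the characteristics are globally defined: by the global existence theory for \eqref{VP} (equivalently, since $|\mcV|$ stays bounded and $\|E(t)\|_\infty$ is bounded by Lemma \ref{XVsupp}), solutions of \eqref{gcharsys} exist for all $t\geq 0$; and since $f^\alpha\in C^2$ forces $\rho(t)\in C^2_c(\bfR^3)$ and hence $E(t,\cdot)\in C^1$ with $\nabla_x E$ bounded (Lemma \ref{DEf}), the right-hand side of \eqref{gcharsys} is $C^1$ in $(x,v)$ and continuous in $t$. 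Standard ODE theory (smooth dependence on initial data, invertibility of the flow by integrating backward in time) then gives that $(x,v)\mapsto(\mcY(t,\tau,x,v),\mcV(t,\tau,x,v))$ is a $C^1$ diffeomorphism of $\bfR^6$ onto itself for every $t$.

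The key computation is that the time-dependent vector field
\[
\mathcal{F}(t,x,v)=\left(-\tfrac{q_\alpha}{m_\alpha}\,t\,E(t,x+tv),\ \tfrac{q_\alpha}{m_\alpha}\,E(t,x+tv)\right)
\]
driving \eqref{gcharsys} is divergence-free in $(x,v)$ at every fixed $t$. Indeed, the chain rule gives $\nabla_v\cdot\bigl(E(t,x+tv)\bigr)=t\,(\nabla_x\cdot E)(t,x+tv)$ while $\nabla_x\cdot\bigl(E(t,x+tv)\bigr)=(\nabla_x\cdot E)(t,x+tv)$, so
\[
\nabla_{(x,v)}\cdot\mathcal{F}(t,x,v)=-\tfrac{q_\alpha}{m_\alpha}\,t\,(\nabla_x\cdot E)(t,x+tv)+\tfrac{q_\alpha}{m_\alpha}\,t\,(\nabla_x\cdot E)(t,x+tv)=0.
\]
Hence, by Liouville's theorem, the flow of \eqref{gcharsys} has Jacobian determinant identically equal to $1$ (the Jacobian $J(t)$ solves $\dot J(t)=(\nabla_{(x,v)}\cdot\mathcal{F})(t,\mcY(t),\mcV(t))\,J(t)=0$), and composing with the unit-Jacobian shear $(x,v)\mapsto(x-v\tau,v)$, which is precisely the value of the map at $t=\tau$, keeps the Jacobian determinant of $(x,v)\mapsto(\mcY(t,\tau,x,v),\mcV(t,\tau,x,v))$ equal to $1$ for all $t$. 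Thus the map is a measure-preserving diffeomorphism.

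To deduce the statement about the supports I would use that $g^\alpha$ is constant along these characteristics: \eqref{gcharsys} is exactly the characteristic system of \eqref{VPg}, so $\frac{d}{dt}g^\alpha(t,\mcY(t,0,x,v),\mcV(t,0,x,v))=0$, whence
\[
g^\alpha(t,\mcY(t,0,x,v),\mcV(t,0,x,v))=g^\alpha(0,x,v)=f^\alpha_0(x,v)
\]
for all $t\geq 0$. Therefore $\bS_g(t)$ is precisely the image of $\bS_g(0)$ under the measure-preserving diffeomorphism $(x,v)\mapsto(\mcY(t,0,x,v),\mcV(t,0,x,v))$ (taking closures on both sides), and so $|\bS_g(t)|=|\bS_g(0)|$ for every $\alpha=1,\dots,N$ and $t\geq 0$.

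I do not anticipate a genuine obstacle here. The two points demanding care are the bookkeeping that turns the ODE flow into an honest $C^1$ diffeomorphism of all of $\bfR^6$ — global existence of characteristics together with enough spatial regularity of $E$, both furnished by the hypotheses and Lemmas \ref{XVsupp}--\ref{DEf} — and the chain-rule identity $\nabla_v\cdot E(t,x+tv)=t\,(\nabla_x\cdot E)(t,x+tv)$ responsible for the exact cancellation in the divergence.
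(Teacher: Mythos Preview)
Your proposal is correct and follows essentially the same approach as the paper: both identify the vector field $\Phi(t,x,v)=\tfrac{q_\alpha}{m_\alpha}(-tE(t,x+vt),\,E(t,x+vt))$ as divergence-free in $(x,v)$ and invoke Liouville's theorem to conclude the measure-preserving property. Your version is more explicit---you carry out the chain-rule cancellation in the divergence, factor off the unit-Jacobian shear $(x,v)\mapsto(x-v\tau,v)$, and spell out the passage from measure preservation to $|\bS_g(t)|=|\bS_g(0)|$---whereas the paper simply asserts $\nabla_{(x,v)}\cdot\Phi=0$ and refers to the classical Liouville theorem and the analogous argument for the untranslated system.
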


\begin{proof}
\alt{Because the system \eqref{VPg} is generated by a divergence-free vector field, namely it is of the form
$$\partial_t g^\alpha + \Phi(t,x,v)  \cdot \nabla_{(x,v)} g^\alpha(t,x,v)= 0$$
where
$$\Phi(t,x,v) = \frac{q_\alpha}{m_\alpha}\left (-t E(t,x+vt), E(t,x+vt) \right )$$
satisfies $\nabla_{(x,v)} \cdot \Phi = 0$, 
the measure-preserving property of characteristics in phase space is merely a consequence of the classical Liouville Theorem \cite[pp. 63-65]{Tuckerman} from statistical mechanics.
Additionally, the proof is analogous to that for the characteristics of the untranslated Hamiltonian system \eqref{VP}. We direct the reader to  \cite[pp. 118-120]{Glassey} for further details.}

\end{proof}

Next, we establish a bound on the growth of the spatial support of the translated distribution functions, which will be instrumental within subsequent sections.
\begin{lemma}
\label{Xsupp}
For every $\alpha = 1, ..., N$,  $\tau \geq 0$, and $(x,v) \in \mcU$ we have
$$\left | \mcY(t, \tau, x, v)  - (x -v\tau) \right | \lesssim \int_\tau^t s \| E(s) \|_\infty \ ds$$
and 
$$\alt{\mu(t)} \lesssim \left ( 1 + \int_0^t s \| E(s) \|_\infty \ ds \right )^3.$$
\alt{In particular, Lemma \ref{XVsupp} implies
$$\left | \mcY(t, \tau, x, v) \right | \lesssim \ln(t) \qquad \mathrm{and} \qquad 
\mu(t) \lesssim \ln^3(t).$$}
\end{lemma}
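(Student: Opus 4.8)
The plan is to integrate the translated spatial characteristic equation \eqref{gchar} directly, control the field along the true characteristic $\mcX(s) = \mcY(s) + s\mcV(s)$ using only the sup-norm bound from Lemma \ref{XVsupp}, and then pass from the pointwise characteristic bound to a volume bound for the spatial support. First I would write, for $(x,v)\in\mcU$ and $t \geq \tau$,
\begin{equation*}
\mcY(t,\tau,x,v) - (x - v\tau) = -\frac{q_\alpha}{m_\alpha}\int_\tau^t s\, E\big(s, \mcY(s,\tau,x,v) + s\mcV(s,\tau,x,v)\big)\, ds,
\end{equation*}
so that taking absolute values inside the integral gives the first claimed estimate $|\mcY(t,\tau,x,v) - (x-v\tau)| \lesssim \int_\tau^t s\|E(s)\|_\infty\, ds$ immediately — no Gronwall is needed here because the dependence of the integrand on $\mcY$ is absorbed by the sup-norm.

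For the support bound, I would use the measure-preserving property from Lemma \ref{MPP}. Fix $v$; a point $x$ lies in $\bS_x(t,v)$ precisely when the backward characteristic $(\mcY(0,t,x,v), \mcV(0,t,x,v))$ lies in $\bS_g(0)$. Since $f_0^\alpha$ has compact support, $\bS_g(0) = \bS_f(0)$ is contained in a fixed ball, so in particular $|x - v t - (\text{bounded})|$ — more carefully, running the characteristic forward from a point of $\bS_g(0)$, the first estimate (applied with $\tau = 0$, and noting $|\mcV(s)| \lesssim 1$ keeps the relevant points in a compact set uniformly, so the implied constant is uniform) shows that for each fixed $v$ the set $\bS_x(t,v)$ is contained in a translate of a fixed compact set enlarged by $C\int_0^t s\|E(s)\|_\infty\, ds$ in each coordinate direction. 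Hence its measure is $\lesssim (1 + \int_0^t s\|E(s)\|_\infty\, ds)^3$, uniformly in $v$ and $\alpha$, which gives the bound on $\mu(t)$. (One small point to handle: the estimate in the first display is stated for $(x,v)$ in the fixed compact set $\mcU$; since $\bS_g(0)$ is itself compact one simply takes $\mcU$ to contain it, or invokes that the argument in the proof uses only $|\mcV(s)|\lesssim 1$ and the support of $f_0^\alpha$, both of which hold here.)

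Finally, the "in particular" statements follow by inserting the decay rate $\|E(s)\|_\infty \lesssim s^{-2}$ from Lemma \ref{XVsupp}: then $\int_\tau^t s\|E(s)\|_\infty\, ds \lesssim \int_1^t s^{-1}\, ds \lesssim \ln(t)$, so $|\mcY(t,\tau,x,v)| \leq |x - v\tau| + C\ln(t) \lesssim \ln(t)$ (the $|x-v\tau|$ term is $\lesssim 1$ on $\mcU$), and consequently $\mu(t) \lesssim (1 + \ln(t))^3 \lesssim \ln^3(t)$ for $t$ large.

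The main obstacle — really the only nontrivial point — is making the passage from the pointwise characteristic estimate to the measure estimate for $\bS_x(t,v)$ genuinely uniform in $v \in \bfR^3$: the first display's implied constant must not degrade as $|v|$ grows. This is fine because the field bound $\|E(s)\|_\infty$ used in the integrand is independent of the base point, and the only place $v$ enters the characteristic's behavior is through $\mcV(s)$, which stays within $\mathcal{O}(1)$ of $v$ by Lemma \ref{XVsupp}; so the map $x \mapsto \mcY(0,t,x,v)$ is, for each fixed $(t,v)$, within a $v$-independent $\mathcal{O}(\int_0^t s\|E(s)\|_\infty ds)$-perturbation of the shear $x \mapsto x - vt$, which is volume-preserving. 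Combined with Lemma \ref{MPP}, the preimage of the fixed compact set $\bS_g(0)$ therefore has the claimed volume bound uniformly.
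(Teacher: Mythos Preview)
Your proof is correct and follows essentially the same approach as the paper: integrate \eqref{gchar} directly for the pointwise estimate, then use that forward characteristics launched from the compact set $\bS_g(0)$ remain in a ball of radius $C(1 + \int_0^t s\|E(s)\|_\infty\,ds)$, and cube. Your invocation of Lemma~\ref{MPP}, backward characteristics, and the shear-perturbation discussion is more machinery than the paper uses or needs --- the paper simply bounds $|\mcY(t,0,x,v)|$ for $(x,v)\in\bS_g(0)$ directly, which automatically gives a $v$-uniform containment of $\bS_x(t,v)$ in a fixed ball and makes your uniformity concern disappear.
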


\begin{proof}
Using \eqref{gchar} we immediately find
$$\left | \dot{\mcY}(t) \right |  \leq Ct \Vert E(t) \Vert_\infty,$$
and thus
$$\left | \mcY(t, \tau, x, v)  - (x - v \tau) \right | \leq \int_\tau^t \left |  \dot{\mcY}(s, \tau, x, v)  \right | \ ds \lesssim \int_\tau^t s \Vert E(s) \Vert_\infty \ ds.$$
for fixed $\tau \geq 0$ and $(x,v) \in \mcU$.
\alt{Furthermore, this implies
$$\left | \mcY(t, 0, x, v) \right | \lesssim  |x| + \int_\tau^t s \Vert E(s) \Vert_\infty \ ds \lesssim 1 + \int_\tau^t s \Vert E(s) \Vert_\infty \ ds.$$
for $(x,v) \in \bS_g(0)$.
The estimate on the spatial support then follows as
$$\mu(t) \lesssim \left (\max_{\alpha = 1, ..., N}\sup_{(x,v) \in \bS_g(0)} \left | \mcY(t, 0, x, v) \right | \right )^3 \lesssim \left ( 1 + \int_\tau^t s \Vert E(s) \Vert_\infty \ ds \right)^3.$$}
\end{proof}

Additionally, we estimate the derivatives of these spatial characteristics using the behavior of the untranslated characteristics established in Lemma \ref{LDchar}.

\begin{lemma}
\label{dY}
For every $\alpha = 1, ..., N$, $\tau$ sufficiently large, $t \geq \tau$, and $(x, v) \in \mcU$, we have
$$\left | \frac{\partial \mcY}{\partial x}(t,\tau, x, v) - \mathbb{I} \right | \lesssim \int_\tau^t s^2 \| \nabla_x E(s) \|_\infty ds
\quad \mathrm{and} \quad
\left | \frac{\partial \mcY}{\partial v}(t,\tau, x, v)   + \tau \mathbb{I} \right | \lesssim \int_\tau^t s^2 \| \nabla_x E(s) \|_\infty ds.$$
\end{lemma}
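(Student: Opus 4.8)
The plan is to differentiate the defining relation $\mcY(t,\tau,x,v) = \mcX(t,\tau,x,v) - t\mcV(t,\tau,x,v)$ and then control the two resulting pieces using the integral representations already derived in the proof of Lemma \ref{LDchar}, together with the field-derivative decay $\|\nabla_x E(s)\|_\infty \lesssim s^{-3}\ln(s)$ from Lemma \ref{DEf}. For the $x$-derivative, we have
$$\frac{\partial \mcY}{\partial x}(t) = \frac{\partial \mcX}{\partial x}(t) - t\frac{\partial \mcV}{\partial x}(t).$$
From Lemma \ref{LDchar}'s proof, $\frac{\partial \mcX}{\partial x}(t) = \mathbb{I} + \frac{q_\alpha}{m_\alpha}\int_\tau^t (t-s)\nabla_x E(s,\mcX(s))\frac{\partial \mcX}{\partial x}(s)\,ds$ and $\frac{\partial \mcV}{\partial x}(t) = \frac{q_\alpha}{m_\alpha}\int_\tau^t \nabla_x E(s,\mcX(s))\frac{\partial \mcX}{\partial x}(s)\,ds$. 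Subtracting $t$ times the second from the first, the $\mathbb{I}$ survives and the integrands combine to give a factor $(t-s) - t = -s$, so
$$\frac{\partial \mcY}{\partial x}(t) - \mathbb{I} = -\frac{q_\alpha}{m_\alpha}\int_\tau^t s\,\nabla_x E(s,\mcX(s))\frac{\partial \mcX}{\partial x}(s)\,ds.$$
Now invoke the bound $\left|\frac{\partial \mcX}{\partial x}(s)\right| \lesssim s$ from Lemma \ref{LDchar} to obtain $\left|\frac{\partial \mcY}{\partial x}(t) - \mathbb{I}\right| \lesssim \int_\tau^t s^2 \|\nabla_x E(s)\|_\infty\,ds$, which is the first claim.

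For the $v$-derivative the computation is structurally identical: $\frac{\partial \mcY}{\partial v}(t) = \frac{\partial \mcX}{\partial v}(t) - \mcV(t) - t\frac{\partial \mcV}{\partial v}(t)$. Wait — one must be careful: $\mcY = \mcX - t\mcV$, so $\partial_v \mcY = \partial_v\mcX - t\,\partial_v\mcV$ (the $t$ is a parameter, not the integration variable, so no extra $\mcV$ term appears when differentiating in $v$). Using $\frac{\partial \mcX}{\partial v}(t) = (t-\tau)\mathbb{I} + \frac{q_\alpha}{m_\alpha}\int_\tau^t (t-s)\nabla_x E(s,\mcX(s))\frac{\partial \mcX}{\partial v}(s)\,ds$ and $\frac{\partial \mcV}{\partial v}(t) = \mathbb{I} + \frac{q_\alpha}{m_\alpha}\int_\tau^t \nabla_x E(s,\mcX(s))\frac{\partial \mcX}{\partial v}(s)\,ds$, the leading terms give $(t-\tau)\mathbb{I} - t\mathbb{I} = -\tau\mathbb{I}$, and the integral terms again combine with the factor $(t-s)-t = -s$:
$$\frac{\partial \mcY}{\partial v}(t) + \tau\mathbb{I} = -\frac{q_\alpha}{m_\alpha}\int_\tau^t s\,\nabla_x E(s,\mcX(s))\frac{\partial \mcX}{\partial v}(s)\,ds.$$
Applying $\left|\frac{\partial \mcX}{\partial v}(s)\right| \lesssim s$ from Lemma \ref{LDchar} yields $\left|\frac{\partial \mcY}{\partial v}(t) + \tau\mathbb{I}\right| \lesssim \int_\tau^t s^2 \|\nabla_x E(s)\|_\infty\,ds$, completing the proof.

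The argument is almost entirely bookkeeping, so there is no serious obstacle; the one point requiring genuine care is the algebraic cancellation of the leading-order terms ($\mathbb{I}$ in the $x$-case, $-\tau\mathbb{I}$ in the $v$-case) and the collapse of the two kernels $(t-s)$ and $t$ into the single factor $-s$ inside the integral — this is exactly the structural feature that makes $\mcY$ better behaved than $\mcX$. One should also note that the hypotheses "$\tau$ sufficiently large" and "$(x,v) \in \mcU$" are needed only to apply Lemma \ref{LDchar}, whose derivative bounds were established under precisely those conditions; no further smallness is used here. Finally, the right-hand integrals are finite (indeed $o(1)$) as $t\to\infty$ since $s^2\|\nabla_x E(s)\|_\infty \lesssim s^{-1}\ln(s)$ by Lemma \ref{DEf}, though the statement of the lemma records only the integral bound and leaves that simplification to later sections.
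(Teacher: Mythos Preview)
Your proof is correct and essentially identical to the paper's. The paper differentiates the integral representation $\mcY(t) = x - v\tau - \frac{q_\alpha}{m_\alpha}\int_\tau^t s\,E(s,\mcX(s))\,ds$ directly, while you combine the separate integral formulas for $\partial\mcX$ and $\partial\mcV$ from Lemma~\ref{LDchar}; both routes yield the same expression $\frac{\partial\mcY}{\partial x}(t) - \mathbb{I} = -\frac{q_\alpha}{m_\alpha}\int_\tau^t s\,\nabla_x E(s,\mcX(s))\frac{\partial\mcX}{\partial x}(s)\,ds$ (and the analogous $v$-derivative formula), after which the application of Lemma~\ref{LDchar} is the same.
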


\begin{proof}
Using \eqref{gcharalt} and \eqref{gchar} we write 
$$\mcY(t,\tau,x,v) = x - v\tau - \alt{\frac{q_\alpha}{m_\alpha}} \int_\tau^t s E(s, \mcX(s)) \ ds,$$
and taking an $x$ derivative yields
$$\frac{\partial \mcY}{\partial x}(t)= \mathbb{I} - \alt{\frac{q_\alpha}{m_\alpha}} \int_\tau^t s \nabla_x E(s, \mcX(s)) \frac{\partial \mcX}{\partial x}(s) \ ds.$$
Using Lemma \ref{LDchar}, we find
$$\left | \frac{\partial \mcY}{\partial x}(t) - \mathbb{I} \right | \lesssim \int_\tau^t s^2 \| \nabla_x E(s) \|_\infty ds$$ 
for $\tau$ sufficiently large.
Proceeding similarly for $v$-derivatives, we have
$$\frac{\partial \mcY}{\partial v}(t)= -\tau \mathbb{I} - \alt{\frac{q_\alpha}{m_\alpha}} \int_\tau^t s \nabla_x E(s, \mcX(s)) \frac{\partial \mcX}{\partial v}(s) \ ds.$$
Again using Lemma \ref{LDchar}, for $\tau$ sufficiently large we find 
$$\left | \frac{\partial \mcY}{\partial v}(t) +\tau \mathbb{I} \right | \lesssim \int_\tau^t s^2 \| \nabla_x E(s) \|_\infty ds.$$ 
\end{proof}

Finally, we show that velocity derivatives of $g^\alpha$ grow more slowly than those of $f^\alpha$, which are established by Lemma \ref{DEf}. 

\begin{lemma}
\label{Dvg}
We have
$$\mcG(t) \lesssim 1 + \int_0^t s^2 \|\nabla_x E(s) \|_\infty \ ds,$$
\alt{and thus by Lemma \ref{DEf}
$$\mcG(t) \lesssim \ln^2(t).$$}
\end{lemma}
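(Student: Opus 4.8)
The plan is to bound $\nabla_v g^\alpha$ pointwise, uniformly in $(x,v)$, by differentiating \eqref{VPg} along its own characteristics. First note that, since $g^\alpha(t,x,v)=f^\alpha(t,x+vt,v)$, the chain rule gives $\nabla_x g^\alpha(t,x,v)=(\nabla_x f^\alpha)(t,x+vt,v)$, so Lemma \ref{DEf} immediately yields $\|\nabla_x g^\alpha(t)\|_\infty=\|\nabla_x f^\alpha(t)\|_\infty\lesssim 1$. The same chain rule gives $\nabla_v g^\alpha(t,x,v)=t(\nabla_x f^\alpha)(t,x+vt,v)+(\nabla_v f^\alpha)(t,x+vt,v)$, which only produces the crude bound $\mcG(t)\lesssim t$; the improvement comes instead from tracking the cancellation built into the structure of \eqref{VPg}.

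Next I would differentiate the transport equation in \eqref{VPg} in $x_k$ and in $v_k$. Writing $\mathcal{E}(t,x,v)=E(t,x+vt)$, the key observation is that $\partial_{x_k}\mathcal{E}_i=(\partial_{x_k}E_i)(t,x+vt)$ while $\partial_{v_k}\mathcal{E}_i=t\,(\partial_{x_k}E_i)(t,x+vt)$, so $v$-differentiation brings down an extra factor of $t$, but this factor always multiplies $\nabla_x g^\alpha$, never $\nabla_v g^\alpha$. Integrating the resulting equation along the characteristics $(\mcY(s),\mcV(s))$ of \eqref{gcharsys} (which carry $g^\alpha$) from $s=0$, where the initial data are controlled by $\|\nabla f^\alpha_0\|_\infty$, up to $s=t$, and using $|(\nabla_x E)(s,\mcY(s)+s\mcV(s))|\le\|\nabla_x E(s)\|_\infty$, one obtains for each $\alpha$
\begin{equation*}
\|\nabla_v g^\alpha(t)\|_\infty \lesssim 1 + \int_0^t s^2\|\nabla_x E(s)\|_\infty\,\|\nabla_x g^\alpha(s)\|_\infty\,ds + \int_0^t s\|\nabla_x E(s)\|_\infty\,\|\nabla_v g^\alpha(s)\|_\infty\,ds.
\end{equation*}
Substituting $\|\nabla_x g^\alpha(s)\|_\infty\lesssim 1$ from the first step and maximizing over $\alpha$, this reads
\begin{equation*}
\mcG(t)\lesssim 1+\int_0^t s^2\|\nabla_x E(s)\|_\infty\,ds + \int_0^t s\|\nabla_x E(s)\|_\infty\,\mcG(s)\,ds.
\end{equation*}

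Finally I would close the estimate with Gronwall's inequality. By Lemma \ref{DEf} we have $\|\nabla_x E(s)\|_\infty\lesssim s^{-3}\ln s$, hence $\int_0^\infty s\|\nabla_x E(s)\|_\infty\,ds<\infty$, so the Gronwall factor is bounded and
$$\mcG(t)\lesssim 1+\int_0^t s^2\|\nabla_x E(s)\|_\infty\,ds,$$
which is the first claim. The second follows since $\int_0^t s^2\|\nabla_x E(s)\|_\infty\,ds\lesssim 1+\int_1^t s^{-1}\ln s\,ds\lesssim\ln^2(t)$. The main obstacle is the $s^2$ weight generated by the $v$-differentiation: the entire point of the $g$-reformulation is that this weight falls on the already-bounded quantity $\nabla_x g^\alpha$, so it contributes the integrable term $s^2\cdot s^{-3}\ln s=s^{-1}\ln s$ rather than a genuine power of $t$. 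A minor additional point is that the (bounded) contributions of the characteristics, the field derivative bound, and the initial data over a fixed initial time interval must be absorbed into the constants, exactly as elsewhere in this section.
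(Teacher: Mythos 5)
Your argument is correct and is essentially the paper's: the identity you derive by differentiating \eqref{VPg} in $v_k$ and integrating along the characteristics of \eqref{gcharsys} is, after the substitutions $\nabla_x g^\alpha=(\nabla_x f^\alpha)(t,x+vt,v)$ and $\nabla_v g^\alpha=(t\nabla_x f^\alpha+\nabla_v f^\alpha)(t,x+vt,v)$, literally the same ODE the paper writes for $t\partial_{x_k}f^\alpha+\partial_{v_k}f^\alpha$ along the untranslated characteristics, with the source reducing to $-\frac{q_\alpha}{m_\alpha}t\,\partial_{x_k}E\cdot\nabla_v f^\alpha$. The only (harmless) difference is in the closing step: the paper inserts the bound $\|\nabla_v f^\alpha(s)\|_\infty\lesssim s$ from Lemma \ref{DEf} and reads off the result directly, whereas you keep the self-referential $\nabla_v g^\alpha$ term and close it with a Gronwall argument using the integrability of $s\|\nabla_x E(s)\|_\infty$ — both are valid and give the same bound.
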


\begin{proof}
%
To establish the result we estimate
$$\partial_{v_k} g^\alpha(t,x,v) = \left ( t \partial_{x_k} f^\alpha + \partial_{v_k} f^\alpha \right )(t,x+vt, v).$$
Applying the Vlasov operator to the untranslated version of this quantity yields
$$\frac{d}{dt} \biggl ( t\partial_{x_k} f^\alpha(t, \mcX(t), \mcV(t)) + \partial_{v_k} f^\alpha(t, \mcX(t), \mcV(t))  \biggr )
= - \alt{\frac{q_\alpha}{m_\alpha}} t\partial_{x_k} E(t,\mcX(t)) \cdot \nabla_v f^\alpha(t, \mcX(t), \mcV(t)),$$
and inverting gives
$$\left (t\partial_{x_k} f^\alpha + \partial_{v_k} f^\alpha \right )(t,x,v) = \partial_{v_k} f^\alpha_0(\mcX(0),\mcV(0)) - \alt{\frac{q_\alpha}{m_\alpha}} \int_0^t s \partial_{x_k} E(s,\mcX(s)) \cdot \nabla_v f^\alpha(s, \mcX(s), \mcV(s))  \ ds$$
for $k = 1, 2, 3$.
From the estimates of Lemma \ref{DEf}, this implies
\begin{eqnarray*}
\left | t\partial_{x_k} f^\alpha(t,x,v) + \partial_{v_k} f^\alpha(t,x,v) \right | & \leq & \| \partial_{v_k} f^\alpha_0\|_\infty + \int_0^t s \|\nabla_x E(s) \|_\infty \|\nabla_v f^\alpha(s)\|_\infty \ ds \\
& \lesssim & 1 + \int_0^t s^2 \|\nabla_x E(s) \|_\infty ds
\end{eqnarray*}
for $k = 1, 2, 3$. Hence, we find
$$ \| \nabla_v g^\alpha(t) \|_\infty \lesssim 1 + \int_0^t s^2 \|\nabla_x E(s) \|_\infty ds$$
for all $\alpha = 1, ..., N$, which completes the lemma upon taking the maximum in $\alpha$.
\end{proof}

\section{Velocity Limits and Convergence of the Spatial Average}
\label{sect:vellim}

Because the field decays rapidly in time, we can immediately establish the limiting behavior of the velocity characteristics. 

\begin{lemma}
\label{L6}
For any $\tau \geq 0$, $(x, v) \in \mcU$, and $\alpha = 1, ..., N$ the limiting velocities $\mcV_\infty$ defined by
$$\mcV_\infty(\tau, x, v) :=  \lim_{t \to \infty} \mcV(t, \tau, x, v) = v + \alt{\frac{q_\alpha}{m_\alpha}}\int_\tau^\infty E(s, \mcX(s, \tau, x, v)) ds.$$ 
exist, and are $C^2$, bounded, and invariant under the characteristic flow\alt{, namely $\mcV_\infty$ satisfies
$$\mcV_\infty(t, \mcX(t, \tau, x,v), \mcV(t, \tau, x,v)) = \mcV_\infty(\tau, x,v)$$
for any $t \geq 0$.}
Finally, we have the convergence estimate
$$|\mcV(t, \tau, x, v) - \mcV_\infty(\tau, x, v) | \lesssim \int_t^\infty \|E(s) \|_\infty \ ds,$$
which further yields
\begin{equation}
\label{Winfest}
|\mcV(t, \tau, x, v) - \mcV_\infty(\tau, x, v) | \lesssim t^{-1}.
\end{equation}
\end{lemma}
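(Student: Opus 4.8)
The plan is to integrate the second equation in \eqref{char} from $\tau$ to $t$, which immediately gives the exact formula
$$\mcV(t, \tau, x, v) = v + \frac{q_\alpha}{m_\alpha}\int_\tau^t E(s, \mcX(s, \tau, x, v))\, ds.$$
To establish the limit, I would show the integrand is absolutely integrable on $[\tau, \infty)$: by Lemma \ref{XVsupp} we have $\|E(s)\|_\infty \lesssim s^{-2}$, so $\int_\tau^\infty |E(s, \mcX(s,\tau,x,v))|\, ds \lesssim \int_\tau^\infty s^{-2}\, ds < \infty$, hence the improper integral converges and $\mcV_\infty(\tau, x, v)$ exists with the claimed integral representation. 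Subtracting the finite-time formula from the limiting one gives
$$|\mcV(t, \tau, x, v) - \mcV_\infty(\tau, x, v)| = \left| \frac{q_\alpha}{m_\alpha}\int_t^\infty E(s, \mcX(s, \tau, x, v))\, ds \right| \lesssim \int_t^\infty \|E(s)\|_\infty\, ds \lesssim \int_t^\infty s^{-2}\, ds \lesssim t^{-1},$$
which is both stated convergence estimates at once, including \eqref{Winfest}.

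For boundedness, the formula together with $(x,v) \in \mcU$ compact and $\int_\tau^\infty \|E(s)\|_\infty\, ds < \infty$ gives $|\mcV_\infty(\tau, x, v)| \lesssim 1$; this is really just the first bound in Lemma \ref{XVsupp} pushed to $t = \infty$. For the $C^2$ regularity I would differentiate the integral formula in $x$ and $v$ up to second order, moving derivatives under the integral sign. The first derivatives produce terms like $\nabla_x E(s, \mcX(s))\,\partial_{(x,v)}\mcX(s)$; by Lemma \ref{DEf} we have $\|\nabla_x E(s)\|_\infty \lesssim s^{-3}\ln(s)$ and by Lemma \ref{LDchar} the first derivatives of $\mcX$ grow at most linearly in $s$, so the differentiated integrand is $\lesssim s^{-2}\ln(s)$, which is integrable; the dominated convergence theorem then justifies differentiation under the integral and shows $\mcV_\infty \in C^1$. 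For the second derivatives one needs in addition bounds on $\nabla_x^2 E$ and on the second derivatives of $\mcX$ — these are not stated in the excerpt as separate lemmas, so the argument either invokes analogous (standard) estimates $\|\nabla_x^2 E(s)\|_\infty \lesssim s^{-4}\,\text{polylog}(s)$ and at-most-quadratic growth of $\partial^2 \mcX$, obtained by differentiating the Duhamel formulas in Lemma \ref{LDchar} once more and running the same Gronwall/bootstrap argument; the resulting integrand is again integrable (decaying like $s^{-2}$ up to logarithms), so $\mcV_\infty \in C^2$.

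Finally, invariance under the flow follows from the group property of characteristics: since $\mcX(s, t, \mcX(t,\tau,x,v), \mcV(t,\tau,x,v)) = \mcX(s, \tau, x, v)$ and likewise for $\mcV$, the integral formula for $\mcV_\infty$ evaluated at the shifted base point $(t, \mcX(t,\tau,x,v), \mcV(t,\tau,x,v))$ equals
$$\mcV(t,\tau,x,v) + \frac{q_\alpha}{m_\alpha}\int_t^\infty E(s, \mcX(s,\tau,x,v))\, ds = v + \frac{q_\alpha}{m_\alpha}\int_\tau^\infty E(s,\mcX(s,\tau,x,v))\, ds = \mcV_\infty(\tau,x,v),$$
where the middle step just re-splits the integral at $t$ and uses the finite-time formula for $\mcV(t,\tau,x,v)$. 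The main obstacle is bookkeeping for the $C^2$ claim: one must verify the second-order chain-rule terms are controlled, which requires either citing or sketching the second-derivative analogues of Lemmas \ref{DEf} and \ref{LDchar}; everything else is a direct consequence of the $t^{-2}$ field decay already in hand.
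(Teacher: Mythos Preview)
Your proposal is correct and follows essentially the same approach as the paper: integrate \eqref{char}, use the $t^{-2}$ field decay from Lemma \ref{XVsupp} to get existence, boundedness, and the convergence rate, and then appeal to regularity of $E$ and the characteristics for $C^2$ and to the flow property for invariance. In fact you supply more detail than the paper does on the $C^2$ claim and on invariance---the paper simply asserts these follow from the regularity of the field and characteristics and from the time-reversibility of \eqref{char}, respectively.
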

\begin{proof}
For any $(x,v) \in \mcU$, we find from the characteristic equations
$$\mcV(t,\tau, x, v) = v + \alt{\frac{q_\alpha}{m_\alpha}} \int_\tau^t E(s, \mcX(s, \tau, x, v)) ds.$$
Thus, define
$$\mcV_\infty(\tau, x, v) = v+ \alt{\frac{q_\alpha}{m_\alpha}} \int_\tau^\infty E(s, \mcX(s, \tau, x, v)) ds$$
for every $\tau \geq 0$ and $(x,v) \in \mcU$.
From Lemma \ref{XVsupp} 
we find 
$$|\mcV_\infty(\tau)| \alt{\lesssim} |v| + \int_\tau^\infty \Vert E(s) \Vert_\infty \ ds \alt{\lesssim} 1 + \tau^{-1} \alt{\lesssim} 1.$$
Additionally, the convergence estimate
$$| \mcV(t) -\mcV_\infty| \alt{\lesssim} \int_t^\infty \| E(s) \|_\infty ds \lesssim t^{-1}.$$ 
follows immediately.
Thus, the limit exists and is uniformly bounded for every $\tau \geq 0$ and $(x,v) \in \mcU$.
Finally, $\mcV_\infty$ is $C^2$ due to the regularity of the electric field and spatial characteristics \alt{and invariant under the flow due to the time-reversibility of \eqref{char}}. 
 \end{proof}

%
%

Next, we control the growth in derivatives of these limits and show that they are invertible functions of $v$ for sufficiently large $\tau$. This property will be useful later to perform a change of variables and establish limits of spatial averages.

\begin{lemma}
\label{Vinfinvert}
For every $\alpha = 1, ..., N$ and $(x,v) \in \mcU$ we have
$$\left |\frac{\partial \mcV_\infty}{\partial x}(\tau, x, v) \right | \lesssim \tau^{-2}\ln(\tau) \qquad \mathrm{and} \qquad \left |\frac{\partial \mcV_\infty}{\partial v}(\tau, x, v) - \mathbb{I} \right | \lesssim \tau^{-1}\ln(\tau).$$
Thus, there is $T_1 > 0$ such that for all $\tau \geq T_1$ and $(x, v) \in \mcU$, we have
$$\left |\det \left (\frac{\partial \mcV_\infty}{\partial v}(\tau, x, v) \right ) \right | \geq \frac{1}{2}.$$
Consequently, for $\tau \geq T_1$ and $(x, v) \in \mcU$, the $C^2$ mapping $v \mapsto \mcV_\infty(\tau, x, v)$ is injective and invertible.
\end{lemma}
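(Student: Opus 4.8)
The plan is to differentiate the integral representation of $\mcV_\infty$ obtained in Lemma \ref{L6}, namely
$$\mcV_\infty(\tau, x, v) = v + \frac{q_\alpha}{m_\alpha}\int_\tau^\infty E(s, \mcX(s,\tau,x,v)) \, ds,$$
with respect to $x$ and $v$. Differentiating under the integral sign (justified by the uniform decay bounds and the compactness of $\mcU$), each derivative brings down a factor of $\nabla_x E(s,\mcX(s))$ multiplied by the corresponding derivative of the characteristic $\mcX(s,\tau,x,v)$. Thus
$$\frac{\partial \mcV_\infty}{\partial x}(\tau,x,v) = \frac{q_\alpha}{m_\alpha}\int_\tau^\infty \nabla_x E(s,\mcX(s))\,\frac{\partial \mcX}{\partial x}(s,\tau,x,v)\, ds,$$
and for the $v$-derivative the leading term is the identity coming from differentiating the explicit $v$, plus
$$\frac{\partial \mcV_\infty}{\partial v}(\tau,x,v) - \mathbb{I} = \frac{q_\alpha}{m_\alpha}\int_\tau^\infty \nabla_x E(s,\mcX(s))\,\frac{\partial \mcX}{\partial v}(s,\tau,x,v)\, ds.$$

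Next I would insert the bounds from Lemma \ref{DEf}, namely $\|\nabla_x E(s)\|_\infty \lesssim s^{-3}\ln(s)$, together with the derivative bounds on the characteristics from Lemma \ref{LDchar}, namely $|\partial \mcX/\partial x(s)| \lesssim s$ and $|\partial \mcX/\partial v(s)| \lesssim s$ (both valid for $\tau$ sufficiently large). This gives for both derivative quantities an integrand of size $\lesssim s^{-2}\ln(s)$, so that
$$\left|\frac{\partial \mcV_\infty}{\partial x}(\tau,x,v)\right| \lesssim \int_\tau^\infty s^{-2}\ln(s)\, ds \lesssim \tau^{-1}\ln(\tau),$$
and the same bound for $|\partial \mcV_\infty/\partial v - \mathbb{I}|$. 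This already suffices for the injectivity/invertibility conclusion, which is the real content; I note that the stated bound $\tau^{-2}\ln(\tau)$ for the $x$-derivative would require the sharper characteristic bound $|\partial\mcX/\partial x(s)| \lesssim 1 + (\text{something decaying})$ rather than the crude $\lesssim s$, or else an integration-by-parts exploiting that $\partial\mcX/\partial x$ is close to $\mathbb{I}$ while $\partial \mcX/\partial v$ is close to $(s-\tau)\mathbb{I}$; in any case only the $v$-derivative bound $\tau^{-1}\ln(\tau) \to 0$ matters for what follows.

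For the determinant estimate: since $\partial \mcV_\infty/\partial v(\tau,x,v) = \mathbb{I} + R(\tau,x,v)$ with $|R(\tau,x,v)| \lesssim \tau^{-1}\ln(\tau)$ uniformly over the compact set $\mcU$, choose $T_1$ large enough that $|R(\tau,x,v)| \leq \frac{1}{6}$ (say) for all $\tau \geq T_1$ and $(x,v) \in \mcU$; then standard perturbation of the identity gives $|\det(\mathbb{I}+R)| \geq (1-|R|)^3 \geq \frac{1}{2}$. Finally, for fixed $\tau \geq T_1$ and $x$, the map $v \mapsto \mcV_\infty(\tau,x,v)$ has Jacobian bounded away from zero, hence is a local diffeomorphism; global injectivity follows because $\mcV_\infty(\tau,x,v) - v = R'(\tau,x,v)$ has $v$-derivative $R$ of norm at most $\frac{1}{6} < 1$ on the convex set $\mcU$, so by the mean value inequality $v \mapsto \mcV_\infty(\tau,x,v)$ is a small Lipschitz perturbation of the identity and therefore injective. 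The main obstacle is purely bookkeeping: making sure the characteristic derivative bounds of Lemma \ref{LDchar} are applied only in their regime of validity ("$\tau$ sufficiently large"), and tracking whether the $x$-derivative decay is genuinely $\tau^{-2}\ln\tau$ or only $\tau^{-1}\ln\tau$ — fortunately the weaker rate is enough for every subsequent use.
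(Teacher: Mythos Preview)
Your approach is essentially the same as the paper's: differentiate the integral representation from Lemma \ref{L6}, insert the bounds from Lemmas \ref{DEf} and \ref{LDchar}, and deduce the determinant bound from closeness of $\partial\mcV_\infty/\partial v$ to the identity. You are right to flag the $x$-derivative rate---the paper's proof writes $\int_\tau^\infty \|\nabla_x E(s)\|_\infty\,ds$ as though $|\partial\mcX/\partial x|\lesssim 1$, obtaining $\tau^{-2}\ln\tau$, whereas the stated Lemma \ref{LDchar} bound $\lesssim s$ gives only your $\tau^{-1}\ln\tau$; your observation that the weaker rate suffices for every subsequent use is correct, and your explicit global-injectivity argument (Lipschitz perturbation of the identity) is more detailed than what the paper provides.
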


\begin{proof}
First, note that the limiting velocities given by Lemma \ref{L6} satisfy
$$ \frac{\partial \mcV_\infty}{\partial v}(\tau, x, v) = \mathbb{I} + \alt{\frac{q_\alpha}{m_\alpha}}\int_\tau^\infty \nabla_x E(s, \mcX(s))  \frac{\partial \mcX}{\partial v}(s) \ ds. $$
Due to Lemmas \ref{DEf} and \ref{LDchar}, we find for $(x, v) \in \mcU$ 
$$\left | \frac{\partial \mcV_\infty}{\partial v}(\tau) - \mathbb{I} \right | \lesssim \int_\tau^\infty s \| \nabla_x E(s) \|_\infty ds \lesssim \tau^{-1}\ln(\tau).$$
Therefore, by the continuity of the mapping $A \mapsto \det(A)$, there is $T_1 > 0$ such that for all $\tau \geq T_1$ and $(x, v) \in \mcU$, we have
$$ \left | \det \left (\frac{\partial \mcV_\infty}{\partial v}(\tau, x, v)  \right )\right | \geq \frac{1}{2}.$$
%
Finally, we obtain the estimate on $x$-derivatives in the same manner so that
$$ \frac{\partial \mcV_\infty}{\partial x}(\tau, x, v) = \alt{\frac{q_\alpha}{m_\alpha}}\int_\tau^\infty \nabla_x E(s, \mcX(s))  \frac{\partial \mcX}{\partial x}(s) \ ds, $$
and using Lemma \ref{LDchar}, this implies
$$ \left | \frac{\partial \mcV_\infty}{\partial x}(\tau, x, v) \right | \lesssim \int_\tau^\infty \Vert \nabla_x E(s) \Vert_\infty \ ds \lesssim \tau^{\alt{-2}}\ln(\tau).$$
\end{proof}

Next, we define the collection of all limiting velocities on $\alt{\bS_f(t)}$, which will serve as the support of the limiting spatial average.
First notice that due to its invariance under the flow defined by \eqref{char}, we have
$$\left \{ \mcV_\infty(\tau, x, v) : (x, v) \in \alt{\bS_f(\tau)} \right \} = \left \{ \mcV_\infty(0, x, v) : (x, v) \in \alt{\bS_f(0)} \right \}$$
for all $\tau \geq 0$. Hence, for any $\alpha = 1, ..., N$ define
$$\Omv := \left \{ \mcV_\infty(0, x, v) : (x, v) \in \alt{\bS_f(0)}\right \}.$$
As $\mcV_\infty (0,x, v)$ is continuous due to Lemma \ref{L6}, its range $\Om_v^\alpha$ on the compact set $\alt{\bS_f(0)}$ is also compact.
With the behavior of velocity characteristics well-understood as $t \to \infty$, we wish to understand the distribution of particle velocities in this same limit, and this will be given by the limiting behavior of spatial averages.
In particular, \alt{we remind the reader that} applying 
Lemma \ref{XVsupp} and the field derivative estimate of Lemma \ref{DEf} to Lemmas \ref{Xsupp} and \ref{Dvg} implies
\begin{equation}
\label{Sx}
\alt{\mu(t)} \lesssim \ln^3(t)
\end{equation}
and
\begin{equation}
\label{G}
\mcG(t) \lesssim \ln^2(t),
\end{equation}
which will be significant to establishing the uniform convergence of the spatial average.

\begin{lemma}
\label{Funif}
For every $\alpha = 1, ..., N$ there exists $F^\alpha_\infty \in C_c^2(\bfR^3)$ with $\mathrm{supp}(F^\alpha_\infty) = \Om^\alpha_v$ such that
$$F^\alpha(t,v) = \int f^\alpha(t,x, v) \ dx$$
satisfies $F^\alpha(t,v) \to F^\alpha_\infty(v)$ uniformly as $t \to \infty$
with
$$\| F^\alpha(t) - F^\alpha_\infty \|_\infty \lesssim \int_t^\infty \left (s \| \rho(s)\|_\infty  +  \|E(s)\|_\infty \alt{\mu(s)} \mcG(s) \right ) ds,$$
and thus
$$\| F^\alpha(t) - F^\alpha_\infty \|_\infty \lesssim t^{-1}\ln^5(t).$$
In particular, the limit preserves \alt{particle number, velocity}, and energy, i.e.
$$\int F^\alpha_\infty(v) \ dv= \mcM^\alpha, \qquad \int v F^\alpha_\infty(v) \ dv= \mcJ^\alpha, \qquad \mathrm{and} \qquad 
\sum_{\alpha = 1}^N \int \frac{1}{2} \alt{m_\alpha} |v|^2 F^\alpha_\infty(v) \ dv= \mcEVP.$$
\end{lemma}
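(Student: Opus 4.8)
The plan is to work in the translated reference frame, where the spatial average is $F^\alpha(t,v) = \int g^\alpha(t,x,v)\,dx$, since $g^\alpha(t,x,v) = f^\alpha(t,x+vt,v)$ and the map $x \mapsto x+vt$ is a volume-preserving shift for fixed $v$. First I would differentiate $F^\alpha$ in time using \eqref{VPg}: since $\partial_t g^\alpha = \frac{q_\alpha}{m_\alpha}\bigl(t E(t,x+vt)\cdot\nabla_x g^\alpha - E(t,x+vt)\cdot\nabla_v g^\alpha\bigr)$, integrating in $x$ kills the $t E \cdot \nabla_x g^\alpha$ term (it is an exact $x$-divergence once one accounts for the $v$-dependence carefully, or more simply integrate by parts) and leaves $\partial_t F^\alpha(t,v) = -\frac{q_\alpha}{m_\alpha}\int E(t,x+vt)\cdot\nabla_v g^\alpha(t,x,v)\,dx$. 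Actually the cleanest route is to go through the $f^\alpha$ equation directly: $\partial_t F^\alpha(t,v) = \int \bigl(\partial_t f^\alpha\bigr)(t,x,v)\,dx = -\frac{q_\alpha}{m_\alpha}\int E(t,x)\cdot\nabla_v f^\alpha(t,x,v)\,dx$, where the $v\cdot\nabla_x f^\alpha$ term integrates to zero. Then after the change of variables $x \mapsto x+vt$ one gets $\partial_t F^\alpha(t,v) = -\frac{q_\alpha}{m_\alpha}\int \nabla_v\bigl[E(t,x+vt)\cdot(\text{stuff})\bigr]$ — I would instead keep it as $-\frac{q_\alpha}{m_\alpha}\int E(t,x)\cdot\nabla_v f^\alpha(t,x,v)\,dx$ and bound each piece.

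Next I would estimate $|\partial_t F^\alpha(t,v)|$ in two complementary ways and interpolate. One bound uses the decay of the charge density: writing $\nabla_v f^\alpha = \nabla_v g^\alpha(t,x-vt,v) - t\nabla_x g^\alpha(\cdots)$ after translation, the $\nabla_x g^\alpha$ piece recombines with $E$ into something controllable, and the net effect (integrating by parts in $x$ to move the $t$ onto nothing, or recognizing the Vlasov structure) is a term of size $\lesssim s\|\rho(s)\|_\infty$ coming from the transport/field interplay and a term $\lesssim \|E(s)\|_\infty \mu(s)\mcG(s)$ coming from $\int_{\bS_x(s,v)} \|E(s)\|_\infty |\nabla_v g^\alpha(s)|\,dx \leq \|E(s)\|_\infty\,\mu(s)\,\mcG(s)$, using that the $x$-support of $g^\alpha(s,\cdot,v)$ has measure at most $\mu(s)$ and $|\nabla_v g^\alpha| \leq \mcG(s)$. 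This gives
$$\left|\partial_t F^\alpha(t,v)\right| \lesssim t\|\rho(t)\|_\infty + \|E(t)\|_\infty\,\mu(t)\,\mcG(t),$$
which is integrable in $t$ near infinity because $t\|\rho(t)\|_\infty \lesssim t^{-2}$ (Lemma \ref{XVsupp}) and $\|E(t)\|_\infty\mu(t)\mcG(t) \lesssim t^{-2}\ln^5(t)$ by \eqref{Sx}–\eqref{G}. Hence $F^\alpha(t,\cdot)$ is uniformly Cauchy in $t$, so there is a bounded limit $F^\alpha_\infty(v) := \lim_{t\to\infty}F^\alpha(t,v)$, with the stated rate $\|F^\alpha(t)-F^\alpha_\infty\|_\infty \lesssim \int_t^\infty(s\|\rho(s)\|_\infty + \|E(s)\|_\infty\mu(s)\mcG(s))\,ds \lesssim t^{-1}\ln^5(t)$.

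Then I would pin down the support and regularity of $F^\alpha_\infty$. For the support: $(x,v)\in\bS_f(t)$ forces $v = \mcV(t,0,x_0,v_0)$ for some $(x_0,v_0)\in\bS_f(0)$, and as $t\to\infty$ this converges to $\mcV_\infty(0,x_0,v_0)\in\Omv$; conversely using the injectivity/invertibility of $v\mapsto\mcV_\infty(\tau,x,v)$ from Lemma \ref{Vinfinvert}, one shows the velocity profile $F^\alpha(t,\cdot)$ is eventually supported in an $O(t^{-1}\ln t)$-neighborhood of $\Omv$, so the uniform limit $F^\alpha_\infty$ is supported exactly on the compact set $\Omv$. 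For $C^2$ regularity I would use Lemma \ref{LDchar} / Lemma \ref{Vinfinvert} to perform the change of variables $x_0,v_0 \mapsto \mcX(t,0,\cdot,\cdot), \mcV(t,0,\cdot,\cdot)$ inside the integral $F^\alpha(t,v)=\int f^\alpha_0(x_0,v_0)(\cdots)$, exhibiting $F^\alpha(t,v)$ as an integral of $C^2$ data against a $C^2$-in-$v$ kernel with derivatives bounded uniformly in $t$ (this is where $\ln$-losses in $\partial_v\mcV_\infty$ etc.\ are harmless), so the limit inherits $C^2$ regularity, i.e.\ $F^\alpha_\infty\in C_c^2(\R^3)$. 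Finally, the conservation identities are immediate: $\int F^\alpha_\infty(v)\,dv = \lim_t \iint f^\alpha(t,x,v)\,dx\,dv = \mcM^\alpha$ by conservation of particle number (and the uniform convergence on the fixed compact support lets us pass the limit inside), likewise $\int v F^\alpha_\infty(v)\,dv = \mcJ^\alpha$ from conservation of total velocity, and $\sum_\alpha \int \frac12 m_\alpha|v|^2 F^\alpha_\infty(v)\,dv = \mcEVP$ because the field energy $\frac12\int|E(t)|^2\,dx \lesssim t^{-4}\cdot(\text{vol})\to 0$ (using $\|E(t)\|_\infty\lesssim t^{-2}$ and $\|E(t)\|_1$ bounded), so all the kinetic energy survives in the limit. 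The main obstacle is the first step: showing the $t\|\rho(t)\|_\infty$ bound on $\partial_t F^\alpha$ genuinely emerges — one must handle the apparent factor of $t$ multiplying $\nabla_x$-type terms after translation and see that it is tamed either by an integration by parts in $x$ or by the extra decay of $\rho$, rather than destroying integrability; getting that bookkeeping exactly right is the crux.
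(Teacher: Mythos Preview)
Your approach is the same as the paper's, and your target bound $|\partial_t F^\alpha(t,v)| \lesssim t\|\rho(t)\|_\infty + \|E(t)\|_\infty\,\mu(t)\,\mcG(t)$ is exactly what the paper proves. Two points need tightening, however.

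First, the step you flag as the crux: integrating the $t E(t,x+vt)\cdot\nabla_x g^\alpha$ term in $x$ does \emph{not} kill it. Integration by parts gives
\[
t\int E(t,x+vt)\cdot\nabla_x g^\alpha\,dx \;=\; -t\int (\nabla_x\cdot E)(t,x+vt)\,g^\alpha\,dx \;=\; -t\int \rho(t,x+vt)\,g^\alpha(t,x,v)\,dx,
\]
since $\nabla_x\cdot E = \rho$. This is precisely where the $t\|\rho(t)\|_\infty$ contribution comes from, and it carries a factor of $F^\alpha(t,v) = \int g^\alpha\,dx$. The paper therefore first runs a Gronwall argument on $\|F^\alpha(t)\|_\infty$ to get it uniformly bounded before concluding the clean estimate $|\partial_t F^\alpha| \lesssim t\|\rho(t)\|_\infty + \|E(t)\|_\infty\mu(t)\mcG(t)$; you should insert that step rather than leaving the bookkeeping implicit.

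Second, your energy argument has a genuine gap: $E(t,\cdot)$ is \emph{not} in $L^1(\bfR^3)$ (it decays only like $|x|^{-2}$ at infinity), so the interpolation $\|E\|_2^2 \le \|E\|_\infty\|E\|_1$ is unavailable, and ``$\|E(t)\|_\infty^2\cdot(\mathrm{vol})$'' does not make sense because the field is not compactly supported. The paper instead invokes the a priori dispersive estimate $\|E(t)\|_2 \lesssim t^{-1/2}$ of Illner--Rein to send the potential energy to zero and recover $\sum_\alpha\int\frac12 m_\alpha|v|^2 F^\alpha_\infty\,dv = \mcEVP$. For the $C^2$ regularity and support of $F^\alpha_\infty$, the paper obtains an explicit formula by passing to the weak limit, pulling back along the flow to a fixed time $T$, and then changing variables via $u = \mcV_\infty(T,x,\cdot)$ using Lemma~\ref{Vinfinvert}; your sketch is in the same spirit but should be made to land on such a formula to justify $C^2_c$ cleanly.
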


\begin{proof}
Upon integrating the Vlasov equation of \eqref{VPg} in $x$ and integrating by parts, we find 
\begin{eqnarray*}
\left | \partial_t \int g^\alpha(t,x,v) \ dx \right | & = & \left | \alt{\frac{q_\alpha}{m_\alpha}} \int E(t, x+vt) \cdot (t \nabla_x - \nabla_v) g^\alpha(t,x,v) \ dx \right |\\
& = & \left | \alt{\frac{q_\alpha}{m_\alpha}} t  \int \rho(t, x+vt) g^\alpha(t,x,v) \ dx + \alt{\frac{q_\alpha}{m_\alpha}} \int E(t, x+vt) \cdot \nabla_v g^\alpha(t,x,v) \ dx \right |\\
& \lesssim & t  \Vert \rho(t) \Vert_\infty F^\alpha(t,v) + \Vert E(t) \Vert_\infty \alt{\mu(t)} \Vert \nabla_v g^\alpha(t)\Vert_\infty.
\end{eqnarray*}
Thus, we use Lemma \ref{XVsupp}, \eqref{Sx}, and \eqref{G} to find
$$\left | \partial_t F^\alpha(t,v) \right | \lesssim t^{-2} F^\alpha(t,v) + t^{-2}\ln^5(t).$$
As $F^\alpha(0) \in L^\infty(\bfR^3)$ and the latter term above is integrable in time, we find
$$F^\alpha(t,v) \leq F^\alpha(0,v) + \int_0^t \left | \partial_t F^\alpha(s,v) \right | \ ds \leq C + C \int_0^t (1+s)^{-2} F^\alpha(s, v) \ ds,$$
and after taking the supremum and invoking Gronwall's inequality, this yields
\begin{equation}
\label{FLinfty}
\| F^\alpha(t) \|_\infty \leq C \exp \left (C\int_0^t (1+s)^{-2} \ ds \right ) \leq C
\end{equation}
for all $t \geq 0$.
Returning to the estimate of $\partial_t F^\alpha$, we use the uniform bound on $\|F^\alpha(t)\|_\infty$ to find
$$\left | \partial_t F^\alpha(t,v) \right |  \lesssim t \| \rho(t)\|_\infty  +  \|E(t)\|_\infty \alt{\mu(t)} \mcG(t),$$
and thus
$$\left | \partial_t F^\alpha(t,v) \right |  \lesssim t^{-2}\ln^5(t),$$
which implies that $ \| \partial_t F^\alpha(t)\|_\infty$ is integrable.
This bound then establishes the estimate for $s \geq t$
$$\Vert F^\alpha(t) - F^\alpha(s) \Vert_\infty = \left \Vert \int_s^t \partial_t F^\alpha(\tau) \ d\tau \right \Vert_\infty
\leq \int_t^s \Vert \partial_t F^\alpha(\tau) \Vert_\infty \ d\tau
 \lesssim t^{-1}\ln^5(t),$$
and taking $s \to \infty$ establishes the limit. More precisely, as $F^\alpha(t,v)$ is continuous and the limit is uniform, there is $\alt{F^\alpha_\infty} \in C(\bfR^3)$ such that
$$ \| F^\alpha(t) - \alt{F^\alpha_\infty} \|_\infty  \lesssim t^{-1}\ln^5(t).$$
\alt{Next, we verify the properties of the limiting spatial average.
Due to the uniform convergence, we further conclude weak convergence of $F^\alpha(t,v)$ as a measure, namely
\begin{equation}
\label{Weaklimit}
\lim_{t \to \infty} \int \psi(v) F^\alpha(t,v) dv= \int \psi(v) F^\alpha_\infty(v) dv
\end{equation}
for every $\psi \in C_b \left (\bfR^d \right )$.}
\alt{In this direction, l}et $\psi  \in C_b(\bfR^d)$ be given and fix any $T \geq T_1$ from Lemma \ref{Vinfinvert} and $\alpha = 1, ..., N$.
Then, we apply the measure-preserving change of variables $(\tilde{x}, \tilde{v}) = ( \mcX(T, t, x, v), \mcV(T, t, x, v))$, so that 
\begin{eqnarray*}
\lim_{t \to \infty} \int \psi(v) F^\alpha(t,v) \ dv
& = & \lim_{t \to \infty} \iint \psi(v) \ f^\alpha(t,x,v) \ dv dx\\
& = & \lim_{t \to \infty} \iint\limits_{\alt{\bS_f(t)}} \psi(v) \ f^\alpha(T, \mcX(T,t,x,v), \mcV(T,t,x,v)) \ dv dx\\
& = & \lim_{t \to \infty}\iint\limits_{\alt{\bS_f(T)}} \psi(\mcV(t,T,\tilde{x}, \tilde{v})) \ f^\alpha(T, \tilde{x}, \tilde{v}) \ d\tilde{v} d\tilde{x}\\
& = & \iint\limits_{\alt{\bS_f(T)}} \psi(\mcV_\infty(T,\tilde{x}, \tilde{v})) f^\alpha(T, \tilde{x}, \tilde{v}) \ d\tilde{v} d\tilde{x}
\end{eqnarray*}
by Lebesgue's Dominated Convergence Theorem. Now, by Lemma \ref{Vinfinvert} for any $(\tilde{x}, \tilde{v}) \in \alt{\bS_f(T)}$, the mapping $\tilde{v} \mapsto \mcV_\infty(T, \tilde{x}, \tilde{v})$ is $C^2$ with $$\left |\det \left (\frac{\partial \mcV_\infty}{\partial v}(T, \tilde{x}, \tilde{v}) \right )\right | \geq \frac{1}{2},$$ and thus bijective from $\alt{\bS_v(T,\tilde{x})}$ to $\Om_v$. 
Hence, letting $u = \mcV_\infty(T, \tilde{x}, \tilde{v})$, we perform a change of variables and drop the tilde notation to find
$$\lim_{t \to \infty} \int \psi(v) F^\alpha(t,v) dv
= \iint \psi(u) f^\alpha(T, x, V(u)) \frac{\chfn_{\Om^\alpha_v}(u)}{\left | \det \left (\frac{\partial \mcV_\infty}{\partial v}(T, x, V(u)) \right ) \right |} \ du dx$$
where for fixed $x$, the $C^2$ function $V: \Omv \to \alt{\bS_v(T, x)}$ is given by
$$V(u) = \left (\mcV_\infty \right)^{-1}(T,x,u).$$
\alt{Therefore, by uniqueness of the weak limit we find from \eqref{Weaklimit}}
$$F^\alpha_\infty(u) = \int f^\alpha(T, x, V(u)) \chfn_{\Om^\alpha_v}(u) \left | \det \left (\frac{\partial \mcV_\infty}{\partial v}(T, x, V(u)) \right) \right |^{-1} \ dx.$$
\alt{for any $u \in \bfR^d$, and thus $F^\alpha_\infty \in C_c^2(\bfR^d)$.}
Notice further that due to the compact support and regularity of $F^\alpha(t)$ and $F^\alpha_\infty$, it is sufficient to take $\psi \in C(\mcU)$, where $\mcU \in \bfR^3$ is compact, for this equality to hold.

Next, we show that the conservation laws are maintained in the limit.
In particular, \alt{particle number and velocity} conservation for each species is obtained by merely choosing $\psi(v) = 1$ and $\psi(v) = v$, respectively, within \eqref{Weaklimit} and using the time-independence of these quantities.
Similarly, to establish the energy identity we first note that the potential energy is known \emph{a priori} to decay \cite{IR}, namely
$$\Vert E(t) \Vert_2 \lesssim t^{-1/2}.$$
With this, we use energy conservation 
and \eqref{Weaklimit} with $\psi(v) = \frac{1}{2}|v|^2$ for each $\alpha = 1, .., N$ to find
\begin{eqnarray*}
\mcEVP & = & \lim_{t \to \infty} \left (\sum_{\alpha = 1}^N \iint \frac{1}{2} \alt{m_\alpha} |v |^2 f^\alpha(t, x,v) \ dv dx + \frac{1}{2} \int |E(t,x)|^2 \ dx \right)\\
& = & \lim_{t \to \infty} \sum_{\alpha = 1}^N \iint \frac{1}{2} \alt{m_\alpha} |v |^2 f^\alpha(t, x,v) \ dv dx\\
& = & \sum_{\alpha = 1}^N \lim_{t \to \infty}  \int \frac{1}{2} \alt{m_\alpha} |v|^2 F^\alpha(t,v)  dv\\
& = & \sum_{\alpha = 1}^N \int \frac{1}{2} \alt{m_\alpha} |v|^2 F^\alpha_\infty(v) \   dv. 
\end{eqnarray*}
\end{proof}

\begin{corollary}
\label{P}
Define the net velocity density
$$\mcP(t,v) = \sum_{\alpha = 1}^N \alt{q_\alpha} F^\alpha(t,v)$$
and $\Om_v = \bigcup\limits_{\alpha=1}^N \Om^\alpha_v$.
Then, the limiting density
$$\mcP_\infty(v) = \sum_{\alpha=1}^N \alt{q_\alpha} F_\infty^\alpha(v)$$
satisfies $\alt{\mcP_\infty} \in C_c^2(\bfR^3)$ with
$\mathrm{supp}(\mcP_\infty) = \Om_v$
and
$\mcP(t,v) \to \mcP_\infty(v)$ uniformly as $t \to \infty$ with
$$\| \mcP(t) - \mcP_\infty \|_\infty \lesssim \int_t^\infty \left [s \| \rho(s)\|_\infty  +  \|E(s)\|_\infty \alt{\mu(s)}\mcG(s) \right ] ds,$$
and thus
$$\| \mcP(t) - \mcP_\infty \|_\infty \lesssim t^{-1}\ln^5(t).$$
In particular, we have
$\int \mcP_\infty(v) \ dv= \mcM.$
\end{corollary}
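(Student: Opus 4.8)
The plan is to deduce the corollary directly from Lemma~\ref{Funif} applied to each of the finitely many species, since the net velocity density is just a fixed linear combination of the individual spatial averages. First I would invoke Lemma~\ref{Funif} for every $\alpha = 1, \dots, N$ to obtain the limiting spatial averages $F^\alpha_\infty \in C_c^2(\bfR^3)$ with $\mathrm{supp}(F^\alpha_\infty) = \Om^\alpha_v$, together with the uniform convergence estimate
$$\| F^\alpha(t) - F^\alpha_\infty \|_\infty \lesssim \int_t^\infty \left ( s \| \rho(s) \|_\infty + \| E(s) \|_\infty \mu(s) \mcG(s) \right ) ds \lesssim t^{-1} \ln^5(t),$$
where the second inequality uses Lemma~\ref{XVsupp} together with \eqref{Sx} and \eqref{G}. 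Since $N$ is finite, $\mcP_\infty := \sum_{\alpha=1}^N q_\alpha F^\alpha_\infty$ is a finite linear combination of $C_c^2$ functions and hence lies in $C_c^2(\bfR^3)$, with $\mathrm{supp}(\mcP_\infty) \subseteq \bigcup_{\alpha=1}^N \mathrm{supp}(F^\alpha_\infty) = \Om_v$.

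Next I would establish the convergence rate for $\mcP$ by the triangle inequality: for each $t$,
$$\| \mcP(t) - \mcP_\infty \|_\infty \le \sum_{\alpha=1}^N |q_\alpha|\, \| F^\alpha(t) - F^\alpha_\infty \|_\infty,$$
so the right-hand side is controlled, up to the fixed constant $N \max_{\alpha} |q_\alpha|$, by the integral bound of Lemma~\ref{Funif}, and therefore $\| \mcP(t) - \mcP_\infty \|_\infty \lesssim t^{-1}\ln^5(t)$; in particular $\mcP(t) \to \mcP_\infty$ uniformly. For the total mass identity I would integrate, using that each $F^\alpha_\infty \in C_c^2$ and that particle number is conserved in the limit by Lemma~\ref{Funif} (equivalently, by the weak convergence \eqref{Weaklimit} with $\psi \equiv 1$):
$$\int \mcP_\infty(v)\, dv = \sum_{\alpha=1}^N q_\alpha \int F^\alpha_\infty(v)\, dv = \sum_{\alpha=1}^N q_\alpha \mcM^\alpha = \mcM.$$

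There is essentially no obstacle here — all of the analytic content resides in Lemma~\ref{Funif}. The only point requiring a small amount of care is that passing to the finite sum with the fixed charge weights $q_\alpha$ must preserve each of the three asserted features simultaneously: $C_c^2$ regularity, the support structure (where one records the inclusion $\mathrm{supp}(\mcP_\infty) \subseteq \Om_v$, with equality being the generic situation since cancellation among the $F^\alpha_\infty$ is non-generic), and the logarithmically corrected $t^{-1}$ rate of uniform convergence; each of these is immediate from finiteness of $N$ and linearity.
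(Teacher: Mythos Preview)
Your proposal is correct and matches the paper's approach: the corollary is stated without proof precisely because it follows immediately from Lemma~\ref{Funif} by taking the finite $q_\alpha$-weighted sum, exactly as you outline. Your remark that only the inclusion $\mathrm{supp}(\mcP_\infty)\subseteq\Om_v$ is actually forced by the argument (equality being generic) is a fair observation.
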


\section{Convergence of the Field and Macroscopic Densities}
\label{sect:fieldconv}

Now that the spatial averages are known to converge, we next establish the precise asymptotic profile of the field, its derivatives, and the charge and current densities.
Given the limiting density $\mcP_\infty(v)$ established by Corollary \ref{P}, we define its induced electric field by
$$E_\infty(v) = \nabla_v (\Delta_v)^{-1} \mcP_\infty(v) = \alt{\frac{1}{4\pi}}\int \frac{\xi}{|\xi|^3} \mcP_\infty(v -\xi) \ d\xi$$
for every $v \in \bfR^3$.
\alt{Such an induced field was first identified in \cite{Ionescu} to identify the correction exhibited by the behavior of translated spatial trajectories as $t \to \infty$.}
To ensure the necessary regularity of the limiting field we note that due to Lemma \ref{LE} $\|\partial_{x_j}^k E_\infty\|_\infty$ is finite for $j=1, 2, 3$ and $k=0, 1, 2$, as $\partial_{x_j}^k \mcP_\infty \in C_c(\bfR^3) \subseteq L^1(\bfR^3) \cap L^\infty(\bfR^3)$.
With this, we establish a refined estimate of the electric field.

\begin{lemma}
\label{LField}
We have
$$\sup_{x \in \bfR^3} \left | t^2 E(t,x) - E_\infty\left (\frac{x}{t} \right) \right | \lesssim \| \mcP(t) - \mcP_\infty\|_\infty + t^{-1}  \alt{\mu(t)}\mcG(t),$$
Alternatively, 
assume that there is $\beta : [0,\infty) \to [0,\infty)$ such that $| x | \leq \beta(t)$. Then, we have
$$\sup_{v \in \bfR^3} \left | t^2 E(t,x+vt) - E_\infty(v) \right |  \lesssim \| \mcP(t) - \mcP_\infty\|_\infty + t^{-1}  \alt{\mu(t)}^{2/3} \mcG(t) \left (\alt{\mu(t)}^{1/3} + \beta(t) \right ).$$
\end{lemma}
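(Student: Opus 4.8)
The plan is to start from the representation of $E(t,x+vt)$ in \eqref{VPg}, namely
\begin{equation*}
E(t,x+vt) = \frac{1}{4\pi t^2} \sum_{\alpha=1}^N q_\alpha \iint \frac{\xi}{|\xi|^3}\, g^\alpha\!\left(t, w, v - \xi + \frac{x-w}{t}\right) dw\, d\xi,
\end{equation*}
and compare it termwise with
\begin{equation*}
E_\infty(v) = \frac{1}{4\pi}\int \frac{\xi}{|\xi|^3}\, \mcP_\infty(v-\xi)\, d\xi
= \frac{1}{4\pi} \sum_{\alpha=1}^N q_\alpha \iint \frac{\xi}{|\xi|^3}\, F^\alpha_\infty(v-\xi)\, dw\, d\xi,
\end{equation*}
where in the last expression the $w$-integral is understood to run only over $\bS_x(t,\cdot)$ so that the two $w$-integrations are over the same (bounded, measure $\lesssim \mu(t)$) set — this is the standard device for matching the "no-$w$-dependence" limiting kernel against the $w$-dependent prelimit one. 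Writing $t^2 E(t,x+vt) - E_\infty(v) = \frac{1}{4\pi}\int \frac{\xi}{|\xi|^3}\, R(t,\xi,v)\, d\xi$, the integrand $R$ splits into two pieces: a \emph{velocity-shift} error $\sum_\alpha q_\alpha\int \big(g^\alpha(t,w,v-\xi+\tfrac{x-w}{t}) - g^\alpha(t,w,v-\xi)\big)\,dw$, and a \emph{spatial-average} error $\sum_\alpha q_\alpha\int g^\alpha(t,w,v-\xi)\,dw - \mcP_\infty(v-\xi) = \mcP(t,v-\xi) - \mcP_\infty(v-\xi)$.

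For the second piece, integrating $\frac{\xi}{|\xi|^3}$ against $\mcP(t,v-\xi)-\mcP_\infty(v-\xi)$ is exactly the operator in Lemma \ref{LE} applied to $\mcP(t,\cdot)-\mcP_\infty$; since this difference is supported in a fixed bounded set and bounded by $\|\mcP(t)-\mcP_\infty\|_\infty$, this contributes $\lesssim \|\mcP(t)-\mcP_\infty\|_\infty$ (its $L^1$ and $L^\infty$ norms are both controlled by the sup-norm), which accounts for the first term on the right-hand side. For the first piece, I bound the increment of $g^\alpha$ in its velocity argument by the mean value theorem: $|g^\alpha(t,w,v-\xi+\tfrac{x-w}{t}) - g^\alpha(t,w,v-\xi)| \le \|\nabla_v g^\alpha(t)\|_\infty \cdot \tfrac{|x-w|}{t} \le \mcG(t)\,\tfrac{|x|+|w|}{t}$, and the $w$-integration is over $\bS_x(t,\cdot)$, which has measure $\lesssim \mu(t)$ and, in the first (unconstrained $x$) statement, diameter $\lesssim \mu(t)^{1/3}$ so that $|w|\lesssim |x| + \mu(t)^{1/3}$ there. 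The singular kernel $\xi\mapsto \xi/|\xi|^3$ then has to be integrated against this; the standard split into $|\xi|\le \lambda$ and $|\xi|\ge\lambda$, optimizing over $\lambda$ exactly as in the proof of Lemma \ref{LE}, converts the $L^1$–$L^\infty$ control of the $w$-integral into the claimed bound — giving $\lesssim t^{-1}\mu(t)\mcG(t)$ in the first statement (here the crude bound $|x-w|\lesssim t$ from $\mcX(t)\lesssim t$ absorbs the unbounded-$x$ contribution after the $w$-integral already localizes $w$), and $\lesssim t^{-1}\mu(t)^{2/3}\mcG(t)(\mu(t)^{1/3}+\beta(t))$ in the second, where the hypothesis $|x|\le\beta(t)$ replaces the crude bound and the extra $\mu(t)^{2/3}$ (rather than $\mu(t)$) comes from the $L^1$–$L^\infty$ interpolation in the kernel estimate.

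The main obstacle is the bookkeeping around the singular kernel $\xi/|\xi|^3 \in L^1_{\mathrm{loc}}$ but not globally integrable: one cannot simply pull $\|\cdot\|_\infty$ outside, so the estimate of $\int \frac{\xi}{|\xi|^3} R(t,\xi,v)\,d\xi$ must be done by the dyadic/threshold split that underlies Lemma \ref{LE}, applied with $R(t,\cdot,v)$ in place of $\phi$ — and one must check that $R(t,\cdot,v)$ genuinely has the right $L^1$ and $L^\infty$ bounds in the $\xi$ variable (the $L^1$ bound needs the compact support in $\xi$, which holds because $g^\alpha$ has velocity support of measure $\lesssim 1$ contained in a bounded set, so $v-\xi$ and $v-\xi+\tfrac{x-w}{t}$ range over bounded sets forcing $\xi$ bounded). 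A secondary subtlety is justifying the change of variables $(w,\xi)$ and the reduction of the $\mcP_\infty$ representation to a $w$-integral over the same domain; this is routine but must be stated so that the two kernels are compared pointwise in $\xi$ before integrating. Once these two points are handled, both displayed inequalities follow by collecting the two error contributions.
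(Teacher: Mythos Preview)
Your decomposition and overall strategy match the paper's: write $t^2E(t,x+vt)-E_\infty(v)$ as $\nabla_v(\Delta_v)^{-1}$ applied to $\mcA_1+\mcA_2$, where $\mcA_1(t,v)=\mcP(t,v)-\mcP_\infty(v)$ and $\mcA_2(t,x,v)=\sum_\alpha q_\alpha\int[g^\alpha(t,w,v+(x-w)/t)-g^\alpha(t,w,v)]\,dw$, then bound each piece via Lemma~\ref{LE}. Your treatment of $\mcA_1$ and your $L^\infty_v$ bound on $\mcA_2$ are both correct.

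The gap is the $L^1_v$ estimate of $\mcA_2$. Your argument (``compact support in $\xi$ because the velocity support is bounded'') only yields $\|\mcA_2(t,x)\|_{L^1_v}\lesssim\|\mcA_2(t,x)\|_{L^\infty_v}\lesssim t^{-1}\mu(t)\,\mcG(t)\,(\mu(t)^{1/3}+\beta(t))$, and then Lemma~\ref{LE} produces no gain at all: you end up with $t^{-1}\mu(t)\,\mcG(t)\,(\mu(t)^{1/3}+\beta(t))$, one factor of $\mu(t)^{1/3}$ too many for the second statement (and $\mu(t)^{4/3}$ instead of $\mu(t)$ for the first). The interpolation $\|\phi\|_1^{1/3}\|\phi\|_\infty^{2/3}$ only improves on $\|\phi\|_\infty$ when $\|\phi\|_1$ is genuinely smaller, and compact velocity support alone does not provide that. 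The paper obtains the sharper bound $\|\mcA_2(t,x)\|_{L^1_v}\lesssim t^{-1}\mcG(t)\,(\mu(t)^{1/3}+\beta(t))$ by integrating in $v$ first, applying the mean value theorem, changing variables $u=v+\theta(x-w)/t$, and then invoking Lemma~\ref{MPP}: the resulting $(w,u)$-integral runs over $\bS_g(t)$, whose \emph{phase-space} measure equals $|\bS_g(0)|\lesssim 1$ rather than the naive product bound $\mu(t)\times\mathcal{O}(1)$. That conserved phase-space volume is the missing ingredient; you acknowledge that interpolation should give $\mu^{2/3}$ but never supply the input that makes it do so.

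Two smaller points. Your device of writing $E_\infty$ as a $w$-integral over $\bS_x(t,\cdot)$ is wrong as stated (since $F^\alpha_\infty$ is $w$-independent, integrating would multiply it by $|\bS_x(t,\cdot)|$), and also unnecessary: simply subtract $F^\alpha_\infty(v-\xi)$ directly, as the paper does. And your handling of the first statement (``the crude bound $|x-w|\lesssim t$ absorbs the unbounded-$x$ contribution'') is confused; the clean route is to write $E(t,x)=E(t,0+(x/t)\,t)$ and apply the second statement with $x'=0$, $v'=x/t$, $\beta\equiv 0$---this is exactly the paper's $\tilde{\mcA}_2$ computation, where the velocity shift is $-w/t$, not $(x-w)/t$.
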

\begin{proof}
We begin by proving the latter result, and note that the former will follow with minor alterations.
In order to properly decompose the difference of these quantities, we first compute the translated field. In particular, we have
$$E(t,x+vt) = \sum_{\alpha = 1}^N \alt{\frac{q_\alpha}{4\pi}} \iint \frac{x +vt - y}{\left | x + vt - y \right |^3} \ g^\alpha(t, y - ut, u) \ du dy,$$
which, upon performing the change of variables
$$\xi = \frac{x+vt - y}{t}$$
with respect to $y$
and
$$w = x + (v-u-\xi)t$$
with respect to $u$, gives
$$E(t,x+vt) = \alt{\frac{1}{4\pi t^2}}  \sum_{\alpha = 1}^N \alt{q_\alpha} \iint \frac{\xi}{|\xi|^3} \ g^\alpha \left (t, w, v - \xi + \frac{x-w}{t} \right ) \ dwd\xi.$$
Therefore, due to the convolution structure of $E_\infty$ we have
\begin{equation}
\label{Dform}
t^2 E(t,x+vt) - E_\infty(v) =  \alt{\frac{1}{4\pi}}\int \frac{\xi}{|\xi|^3}  \sum_{\alpha = 1}^N \alt{q_\alpha} \left ( \int g^\alpha \left (t, w, v - \xi + \frac{x-w}{t} \right )  dw- F^\alpha_\infty(v - \xi) \right )\ d\xi.
\end{equation}
Next, we split the $\xi$-integrand so that
$$ \sum_{\alpha = 1}^N \alt{q_\alpha} \left ( \int g^\alpha \left (t, w, v - \xi + \frac{x-w}{t} \right ) \ dw - F^\alpha_\infty(v - \xi) \right ) =: \mcA_1(t,v-\xi) + \mcA_2(t,x,v-\xi)$$
where
\begin{equation}
\label{G1}
\mcA_1(t,v) =  \sum_{\alpha = 1}^N \alt{q_\alpha} \left (\int g^\alpha(t,w, v) dw - F^\alpha_\infty(v) \right ) = \mcP(t,v) - \mcP_\infty(v)
\end{equation}
and
\begin{equation}
\label{G2}
\mcA_2(t,x,v) = \sum_{\alpha = 1}^N \alt{q_\alpha}\int \left ( g^\alpha \left (t, w, v  + \frac{x-w}{t} \right ) -g^\alpha(t,w, v) \right )dw.
\end{equation}
Using this decomposition in \eqref{Dform}, we have
$$\sup_{v \in \bfR^3} \left |t^2 E(t,x+vt) - E_\infty(v) \right | \leq  \left \| \nabla_v (\Delta_v)^{-1} \mcA_1(t) \right \|_{L^\infty_v} + \left \| \nabla_v (\Delta_v)^{-1} \mcA_2(t,x) \right \|_{L^\infty_v}.$$
To estimate the convolution terms on the right side of the inequality, we will use Lemma \ref{LE}.

Now, to estimate the $\mcA_1$ term we find
$$\|\mcA_1(t) \|_{L^1_v} = \int \left | \mcP(t,\xi) - \mcP_\infty(\xi) \right | \ d\xi \lesssim \|\mcP(t) - \mcP_\infty \|_\infty $$
as the integral in $\xi$ is over a compact subset of $\bfR^3$ even as $t \to \infty$ due to Lemma \ref{XVsupp}.
Of course, we also have
$$\|\mcA_1(t) \|_{L^\infty_v} =  \| \mcP(t) - \mcP_\infty \|_\infty.$$
Using these estimates with Lemma \ref{LE} yields
\begin{equation}
\label{I}
\| \nabla_v (\Delta_v)^{-1} \mcA_1(t) \|_{L^\infty_v} \lesssim \| \mcP(t) - \mcP_\infty \|_\infty.
\end{equation}

To control the $\mcA_2$ term, we use the spatial support of $g^\alpha$ and velocity derivatives, which yields
\begin{eqnarray*}
\| \mcA_2(t,x) \|_{L^\infty_v} 
& = & \sup_{v \in \bfR^3} \left | \sum_{\alpha = 1}^N \alt{q_\alpha} \int \left [g^\alpha \left (t, w, v  + \frac{x-w}{t} \right ) -  g^\alpha(t,w, v) \right ] \ dw \right |\\
& \lesssim & \sup_{v \in \bfR^3} \sum_{\alpha = 1}^N \int \left |\int_0^1 \frac{d}{d\theta} \left (g^\alpha \left (t, w, v + \theta \frac{x-w}{t} \right )  \right )  d\theta \right |  dw\\
& \lesssim & t^{-1} \sup_{v \in \bfR^3} \sum_{\alpha = 1}^N \int_0^1 \int |x-w| \left |\nabla_v g^\alpha \left (t, w, v + \theta \frac{x-w}{t} \right ) \right |  dw d\theta\\
& \lesssim & t^{-1}  \sum_{\alpha = 1}^N\| \nabla_v g^\alpha (t) \|_\infty \int_{\alt{\{g^\alpha \neq 0\}}} \left ( |x| + |w| \right )  dw\\
& \lesssim & t^{-1} \alt{\mu(t)} \mcG(t) \left (\alt{\mu(t)}^{1/3}   + \beta(t) \right ) .  
\end{eqnarray*}
In order to estimate $\| \mcA_2(t,x) \|_{L^1_v}$, we proceed similarly but use Lemma \ref{MPP} so that $\alt{|\bS_g(t)| = |\bS_g(0)|}$ and 
\begin{eqnarray*}
\| \mcA_2(t,x) \|_{L^1_v}
& \lesssim & \sum_{\alpha = 1}^N \iint \left | g^\alpha \left (t, w, v + \frac{x-w}{t} \right ) - g^\alpha(t,w, v) \right | dw dv\\
& \lesssim & t^{-1} \sum_{\alpha = 1}^N \int_0^1 \iint |x-w| \left | \nabla_v g^\alpha \left (t, w, v + \theta \frac{x-w}{t} \right )\right | dv dw  d\theta\\
& \lesssim & t^{-1} \sum_{\alpha = 1}^N \left (\alt{\mu(t)}^{1/3} + \beta(t) \right )\ \iint\limits_{\alt{\bS_g(t)}} \left | \nabla_v g^\alpha \left (t, w, u \right )\right | du dw \\
& \lesssim & t^{-1} \mcG(t) \left (\alt{\mu(t)}^{1/3} + \beta(t) \right ).
\end{eqnarray*}
Combining this with the bound on $\| \mcA_2(t,x) \|_{L^\infty_v}$ within Lemma \ref{LE} gives
\begin{equation}
\label{II}
\| \nabla_v (\Delta_v)^{-1} \mcA_2(t,x) \|_{L^\infty_v} \lesssim t^{-1} \alt{\mu(t)}^{2/3} \mcG(t) \left (\alt{\mu(t)}^{1/3} + \beta(t) \right ).
\end{equation}
Finally, collecting \eqref{I} and \eqref{II}
we conclude
$$\sup_{v \in \bfR^3} \left |t^2 E(t,x+vt) - E_\infty(v) \right | \lesssim\| \mcP(t) - \mcP_\infty\|_\infty +  t^{-1}  \alt{\mu(t)}^{2/3} \mcG(t)\left (\alt{\mu(t)}^{1/3} + \beta(t) \right ).$$

Next, we prove the first conclusion.
In particular, the field is decomposed using the same change of variables so that
\begin{eqnarray*}
t^2 E(t,x) - E_\infty \left (\frac{x}{t} \right ) & = &  \alt{\frac{1}{4\pi}} \int \frac{\xi}{|\xi|^3}  \sum_{\alpha = 1}^N \alt{q_\alpha} \left ( \int g^\alpha \left (t, w, \frac{x-w}{t} - \xi \right )  dw- F^\alpha_\infty \left (\frac{x}{t} - \xi \right) \right )\ d\xi\\
& = &  \alt{\frac{1}{4\pi}}\int \frac{\xi}{|\xi|^3}  \left [ \mcA_1\left (t, \frac{x}{t}-\xi \right) + \tilde{\mcA}_2\left (t,\frac{x}{t}-\xi \right) \right ] \ d\xi
\end{eqnarray*}
where
$\mcA_1(t,v)$ is given by \eqref{G1}
and
\begin{equation}
\label{G2tilde}
\tilde{\mcA}_2(t,v) = \sum_{\alpha = 1}^N \alt{q_\alpha}\int \left ( g^\alpha \left (t, w, v - \frac{w}{t} \right ) -g^\alpha(t,w, v) \right )dw.
\end{equation}
We use the same strategy as before, invoking Lemma \ref{LE} to estimate the terms on the right side of 
$$\sup_{x \in \bfR^3} \left |t^2 E(t,x) - E_\infty \left (\frac{x}{t} \right ) \right | \leq  \left \| \nabla_v (\Delta_v)^{-1} \mcA_1(t) \right \|_{L^\infty_v} + \left \| \nabla_v (\Delta_v)^{-1} \tilde{\mcA}_2(t) \right \|_{L^\infty_v}.$$
The estimates of $\| \mcA_1(t) \|_{L^1_v}$ and $\| \mcA_1(t) \|_{L^\infty_v}$ are unchanged, while the estimates of $\| \tilde{\mcA}_2(t) \|_{L^1_v}$ and $\| \tilde{\mcA}_2(t) \|_{L^\infty_v}$ are analogous to those of $\mcA_2$.
Indeed, using the same approach we find
\begin{eqnarray*}
\| \tilde{\mcA}_2(t) \|_{L^\infty_v} 
& = & \sup_{v \in \bfR^3} \left | \sum_{\alpha = 1}^N \alt{q_\alpha} \int \left (g^\alpha \left (t, w, v - \frac{w}{t} \right ) -  g^\alpha(t,w, v) \right ) \ dw \right |\\
& \lesssim & t^{-1}  \sum_{\alpha = 1}^N\| \nabla_v g^\alpha (t) \|_\infty \int_{\alt{\{g^\alpha \neq 0\}}} |w|  dw\\
& \lesssim & t^{-1} \alt{\mu(t)}^{4/3} \mcG(t)
\end{eqnarray*}
and
$$ \| \tilde{\mcA}_2(t,x) \|_{L^1_v}
\lesssim \sum_{\alpha = 1}^N \iint \left | g^\alpha \left (t, w, v - \frac{w}{t} \right ) - g^\alpha(t,w, v) \right | dw dv\\
\lesssim t^{-1} \alt{\mu(t)}^{1/3} \mcG(t).$$
Ultimately, the result is 
$$\sup_{x \in \bfR^3} \left |t^2 E(t,x) - E_\infty \left (\frac{x}{t} \right ) \right | \lesssim \| \mcP(t) - \mcP_\infty\|_\infty + t^{-1} \alt{\mu(t)} \mcG(t).$$
\end{proof}

\begin{remark}
The dependence on $\mcG(t)$ can be removed from the field estimate by transferring a derivative from $g^\alpha$ onto the fundamental solution, similar to the forthcoming proof of Lemma \ref{LDE}, but one incurs an extra logarithmic factor and greater powers of $\alt{\mu(t)}$. In particular, the estimate becomes 
$$\sup_{x \in \bfR^3} \left | t^2 E(t,x) - E_\infty\left (\frac{x}{t} \right) \right | \lesssim \| \mcP(t) - \mcP_\infty\|_\infty + t^{-1} \ln(t) \alt{\mu(t)}^{4/3}.$$
\end{remark}

Next, we can estimate derivatives of the field and the charge density using the same tools.
\begin{lemma}
\label{LDE}
We have
$$\sup_{x \in \bfR^3} \left | t^3 \nabla_xE(t,x) - \nabla_vE_\infty\left (\frac{x}{t} \right) \right | \lesssim \ln(t) \left (\| \mcP(t) - \mcP_\infty\|_\infty + t^{-1} \alt{\mu(t)}^{4/3} \mcG(t) \right ),$$
Alternatively, assume that there is $\beta : [0,\infty) \to [0,\infty)$ such that $| x | \leq \beta(t)$. Then, we have
$$\sup_{v \in \bfR^3} \left | t^3 \nabla_x E(t,x+vt) - \nabla_v E_\infty(v) \right | \lesssim \ln(t)\left ( \| \mcP(t) - \mcP_\infty\|_\infty + t^{-1} \alt{\mu(t)} \mcG(t) \left (\alt{\mu(t)}^{1/3} + \beta(t) \right ) \right ).$$
\end{lemma}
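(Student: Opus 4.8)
The plan is to reprise the argument of Lemma~\ref{LField}, the only new feature being that differentiating $E$ once in $x$ lands a velocity derivative inside the translated distribution. Since the $x$-dependence of $g^\alpha$ in the representations used in the proof of Lemma~\ref{LField} enters only through the velocity slot $v-\xi+\tfrac{x-w}{t}$, one has $\partial_{x}\bigl[g^\alpha(t,w,\cdot+\tfrac{x-w}{t})\bigr]=t^{-1}(\nabla_v g^\alpha)(t,w,\cdot+\tfrac{x-w}{t})$; after an integration by parts in $\xi$ this derivative moves onto the kernel $\xi/|\xi|^3$, whose gradient is precisely the Calder\'on--Zygmund kernel appearing in the standard bound \eqref{gradEstd}. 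The logarithm in the statement is exactly the logarithmic loss incurred by that kernel, and the polylogarithmic bounds $\mu(t)\lesssim\ln^3 t$, $\mcG(t)\lesssim\ln^2 t$ from Lemmas~\ref{Xsupp} and \ref{Dvg} are what keep it at size $\ln t$.

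For the first estimate, recall from the proof of Lemma~\ref{LField} that with $\Psi(t,v):=\sum_{\alpha}q_\alpha\int g^\alpha(t,w,v-\tfrac wt)\,dw=t^3\rho(t,tv)=\mcP(t,v)+\tilde{\mcA}_2(t,v)$ (with $\tilde{\mcA}_2$ as in \eqref{G2tilde}), one has $t^2E(t,tv)=\nabla_v(\Delta_v)^{-1}\Psi(t,\cdot)(v)$, while $E_\infty=\nabla_v(\Delta_v)^{-1}\mcP_\infty$. Since therefore $E(t,x)=t^{-2}\nabla_v(\Delta_v)^{-1}\Psi(t,\cdot)(x/t)$, differentiating in $x$ at $x=tv$ gives
$$t^3\nabla_x E(t,tv)-\nabla_v E_\infty(v)=\nabla_v\nabla_v(\Delta_v)^{-1}\phi(t,\cdot)(v),\qquad \phi:=(\mcP(t)-\mcP_\infty)+\tilde{\mcA}_2(t)=\Psi(t,\cdot)-\mcP_\infty.$$
Now apply the logarithmic estimate \eqref{gradEstd} with $\phi$ in place of $\rho$: from the proof of Lemma~\ref{LField} we have $\|\phi(t)\|_{L^1}+\|\phi(t)\|_\infty\lesssim\|\mcP(t)-\mcP_\infty\|_\infty+t^{-1}\mu(t)^{4/3}\mcG(t)$, while $\|\nabla_v\phi(t)\|_\infty\lesssim\mu(t)\mcG(t)+\|\nabla_v\mcP_\infty\|_\infty\lesssim\ln^5 t$ (the $v$-support of $\phi$ being $O(1)$ by Lemma~\ref{XVsupp}), so $\ln^*$ of the relevant ratio is $\lesssim\ln t$; Corollary~\ref{P} and the bounds on $\mu,\mcG$ then produce the claimed inequality.

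For the second estimate I would start instead from the decomposition $t^2E(t,x+vt)-E_\infty(v)=\frac{1}{4\pi}\int\frac{\xi}{|\xi|^3}\bigl(\mcA_1(t,v-\xi)+\mcA_2(t,x,v-\xi)\bigr)\,d\xi$ from the proof of Lemma~\ref{LField}, where now $|x|\le\beta(t)$. Differentiating in $x$, only $\mcA_2$ contributes and $\partial_x$ again converts into $t^{-1}\nabla_v$ inside $g^\alpha$; equivalently, writing $\Psi_x(t,u):=\sum_\alpha q_\alpha\int g^\alpha(t,w,u+\tfrac{x-w}{t})\,dw=t^3\rho(t,x+ut)=\mcP(t,u)+\mcA_2(t,x,u)$,
$$t^3\nabla_x E(t,x+vt)-\nabla_v E_\infty(v)=\frac{1}{4\pi}\int\frac{\xi}{|\xi|^3}\,\nabla_u\bigl[\Psi_x(t,\cdot)-\mcP_\infty\bigr](v-\xi)\,d\xi.$$
Integrating by parts in $\xi$ (the boundary term at $\xi=0$ contributing only a harmless multiple of $\psi:=(\mcP(t)-\mcP_\infty)+\mcA_2(t,x)$ itself, by the standard second-derivative-of-Newtonian-potential identity) turns this into the Calder\'on--Zygmund operator of \eqref{gradEstd} acting on $\psi$. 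Using the bounds $\|\mcA_2(t,x)\|_{L^1}+\|\mcA_2(t,x)\|_\infty\lesssim t^{-1}\mu(t)\mcG(t)\bigl(\mu(t)^{1/3}+\beta(t)\bigr)$ and $\|\nabla_u\psi(t,x,\cdot)\|_\infty\lesssim\mu(t)\mcG(t)\lesssim\ln^5 t$, all taken from the proof of Lemma~\ref{LField}, and noting that $\ln^*$ of the relevant ratio is again $\lesssim\ln t$, yields the second inequality.

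The main technical point to watch is precisely this conversion of the Calder\'on--Zygmund logarithm into a factor of $\ln t$: \eqref{gradEstd} produces $\ln^*\!\bigl(\|\nabla\phi\|_\infty/\|\phi\|_\infty^{4/3}\bigr)\|\phi\|_\infty$, and since $\|\phi\|_\infty$ may decay rapidly one must use that $s\mapsto s\ln^*(1/s)$ is increasing near $0$, together with the polylogarithmic bounds $\mu\lesssim\ln^3 t$ and $\mcG\lesssim\ln^2 t$ and (for the second estimate, when $\beta$ grows at most polynomially) the bound on $\|\mcA_2\|_\infty$, to conclude that the total contribution is $\lesssim\ln t\bigl(\|\mcP(t)-\mcP_\infty\|_\infty+t^{-1}\mu^{4/3}\mcG\bigr)$ and $\lesssim\ln t\bigl(\|\mcP(t)-\mcP_\infty\|_\infty+t^{-1}\mu\mcG(\mu^{1/3}+\beta)\bigr)$ respectively. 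The remaining steps --- differentiation under the integral sign, the principal-value interpretation of $\nabla(\xi/|\xi|^3)$, and collecting the estimates from Corollary~\ref{P} --- are routine.
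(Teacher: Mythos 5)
Your proposal is correct and follows essentially the same route as the paper: both start from the $\mcA_1+\mcA_2$ (resp.\ $\mcA_1+\tilde{\mcA}_2$) decomposition of Lemma \ref{LField}, move the velocity derivative onto the Newtonian kernel, and control the resulting singular integral by splitting into near, intermediate, and far regions, with the $\ln(t)$ factor arising from the intermediate Calder\'on--Zygmund range. The only difference is presentational: the paper carries out the splitting by hand with the explicit choices $d=t^{-1}$ and $R^{-3}=\ln(t)$, whereas you invoke the pre-packaged logarithmic estimate \eqref{gradEstd} and then convert the $\ln^*$ factor into $\ln(t)$ via the monotonicity argument, which does go through because the upper bound you substitute for $\|\phi\|_\infty$ is at least of order $t^{-1}$ (as $\mu,\mcG\geq 1$) while $\|\nabla_v\phi\|_\infty$ is only polylogarithmic.
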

\begin{proof}
As for the previous result, we first prove the latter conclusion, as it is more complicated.
We begin by taking the $v_k$-derivative of the $j$th component of \eqref{Dform} so that
\begin{align*}
& t^3 \partial_{x_k}E^j(t,x+vt)  - \partial_{v_k} E^j_\infty(v) \\
& =  \alt{\frac{1}{4\pi}} \int \frac{\xi_j}{|\xi|^3}  \sum_{\alpha = 1}^N \alt{q_\alpha} \left ( \int \partial_{v_k}g^\alpha \left (t, w, v - \xi + \frac{x-w}{t} \right )  dw- \partial_{v_k}F^\alpha_\infty(v - \xi) \right )\ d\xi\\
& = \alt{\frac{1}{4\pi}} \int \frac{\xi_j}{|\xi|^3} \left ( \partial_{v_k} \mcA_1(t,v-\xi) + \partial_{v_k}\mcA_2(t,x,v-\xi) \right ) \ d\xi
\end{align*}
where $\mcA_1(t,v)$ and $\mcA_2(t,x,v)$ are defined by \eqref{G1} and \eqref{G2}, respectively.
From these definitions we first note that
\begin{equation}
\label{H1}
\alt{\| \partial_{v_k} \mcA_1(t) + \partial_{v_k}\mcA_2(t,x) \|_{L^\infty_v} \lesssim \|\partial_{v_k} \mcP_\infty\|_\infty + \|\partial_{v_k} \mcP(t)\|_\infty + \alt{\mu(t)}\sum_{\alpha=1}^N \|\partial_{v_k} g^\alpha(t) \|_\infty \lesssim \alt{\mu(t)}\mcG(t),}
\end{equation}
while using the estimates established by the proof of Lemma \ref{LField}, we have
\begin{equation}
\label{H2}
\| \mcA_1(t) \|_{L^\infty_v} + \|\mcA_2(t,x) \|_{L^\infty_v} \lesssim \| \mcP(t) - \mcP_\infty\|_\infty + t^{-1}  \alt{\mu(t)} \mcG(t) \left (\alt{\mu(t)}^{1/3} + \beta(t) \right )
\end{equation}
and
\begin{equation}
\label{H3}
\| \mcA_1(t) \|_{L^1_v} + \| \mcA_2(t,x) \|_{L^1_v} \lesssim \| \mcP(t) - \mcP_\infty\|_\infty +  t^{-1} \mcG(t) \left (\alt{\mu(t)}^{1/3} + \beta(t) \right ).  
\end{equation}
Then, decomposing the difference of derivatives and using $\partial_{v_k} \mcA_i = - \partial_{\xi_k} \mcA_i$ with an integration by parts away from the singularity, we find
\begin{eqnarray*}
\left |t^3 \partial_{x_k}E^j(t,x+vt)  - \partial_{v_k} E^j_\infty(v) \right | & \leq & \left | \int_{|\xi| < d} \frac{\xi_j}{|\xi|^3} \left ( \partial_{v_k} \mcA_1(t,v-\xi) + \partial_{v_k}\mcA_2(t,x,v-\xi) \right ) \ d\xi \right |\\
& &  + \left | \int_{|\xi| = d} \frac{\xi_j\xi_k}{|\xi|^4}  \left (\mcA_1(t,v-\xi) + \mcA_2(t,x,v-\xi) \right ) \ dS_\xi \right |\\
& &  + \left | \int_{d < |\xi| < R}  \partial_{\xi_k}  \left ( \frac{\xi_j}{|\xi|^3} \right )   \left (\mcA_1(t,v-\xi) + \mcA_2(t,x,v-\xi) \right ) \ d\xi \right |\\
& & + \left | \int_{|\xi| > R} \partial_{\xi_k}  \left ( \frac{\xi_j}{|\xi|^3} \right )  \left (\mcA_1(t,v-\xi) + \mcA_2(t,x,v-\xi) \right ) \ d\xi \right |\\
& =: &  I + II + III + IV.
\end{eqnarray*}
The estimate of $I$ merely uses \eqref{H1} so that
$$I \lesssim \alt{\mu(t)}\mcG(t) d.$$
We use \eqref{H2} to estimate $II$, which yields
$$II \lesssim \| \mcA_1(t)+ \mcA_2(t,x) \|_{L^\infty_v} \left ( \int_{|\xi| = d} |\xi|^{-2} dS_\xi \right ) \lesssim \| \mcP(t) - \mcP_\infty\|_\infty +  t^{-1}  \alt{\mu(t)} \mcG(t) \left (\alt{\mu(t)}^{1/3} + \beta(t) \right ).$$
Similarly, \eqref{H2} is used to estimate $III$ as
\begin{eqnarray*}
III & \lesssim & \int_{d < |\xi| < R} |\xi|^{-3}  \left |\mcA_1(t,v-\xi) + \mcA_2(t,x,v-\xi) \right | \ d\xi\\
& \lesssim & \| \mcA_1(t)+ \mcA_2(t,x) \|_{L^\infty_v} \ln \left (\frac{R}{d}\right ) \\
& \lesssim & \ln \left (\frac{R}{d}\right ) \left ( \| \mcP(t) - \mcP_\infty\|_\infty +  t^{-1}  \alt{\mu(t)} \mcG(t) \left (\alt{\mu(t)}^{1/3} + \beta(t) \right ) \right ).
\end{eqnarray*}
Finally, we use \eqref{H3} to estimate $IV$ so that
\begin{eqnarray*}
IV & \lesssim & \int_{|\xi| > R} |\xi|^{-3}  \left |\mcA_1(t,v-\xi) + \mcA_2(t,x,v-\xi) \right | \ d\xi\\
& \lesssim & R^{-3} \| \mcA_1(t)+ \mcA_2(t,x) \|_{L^1_v}\\
& \lesssim & R^{-3} \left ( \| \mcP(t) - \mcP_\infty\|_\infty + t^{-1} \mcG(t) \left (\alt{\mu(t)}^{1/3} + \beta(t) \right ) \right ).  
\end{eqnarray*}
Collecting these estimates and choosing $d = t^{-1}$ with $R^{-3} = \ln(t)$ implies $\ln \left (\frac{R}{d} \right ) \lesssim \ln(t)$ and yields
$$\left |t^3 \partial_{x_k}E^j(t,x+vt)  - \partial_{v_k} E^j_\infty(v) \right | \lesssim \ln(t) \left ( \| \mcP(t) - \mcP_\infty\|_\infty +  t^{-1} \alt{\mu(t)} \mcG(t) \left (\alt{\mu(t)}^{1/3} + \beta(t) \right ) \right ).$$  

Finally, the first conclusion follows by making straightforward changes analogous to the previous lemma.
More specifically,
the derivatives are decomposed into
\begin{align*}
& t^3 \partial_{x_k}E^j(t,x)  - \partial_{v_k} E^j_\infty\left (\frac{x}{t} \right) \\
& = \alt{\frac{1}{4\pi}} \int \frac{\xi_j}{|\xi|^3}  \sum_{\alpha = 1}^N \alt{q_\alpha} \left ( \int \partial_{v_k}g^\alpha \left (t, w, \frac{x-w}{t} - \xi \right )  dw- \partial_{v_k}F^\alpha_\infty \left (\frac{x}{t} - \xi \right) \right )\ d\xi\\
& = \alt{\frac{1}{4\pi}} \int \frac{\xi_j}{|\xi|^3} \left [ \partial_{v_k} \mcA_1\left (t, \frac{x}{t} -\xi \right) + \partial_{v_k}\tilde{\mcA}_2 \left (t,\frac{x}{t}-\xi \right) \right ] \ d\xi
\end{align*}
where $\mcA_1(t,v)$ and $\tilde{\mcA_2}(t,v)$ are defined by \eqref{G1} and \eqref{G2tilde}, respectively.
We then use the estimates of $\mcA_1$ and $\tilde{\mcA}_2$ presented in Lemma \ref{LField}.
Otherwise, the proof is identical to the latter conclusion.
\end{proof}

\begin{lemma}
\label{LDensity}
We have
$$\sup_{x \in \bfR^3} \left | t^3 \rho(t,x) - \mcP_\infty \left (\frac{x}{t} \right) \right | \lesssim  \| \mcP(t) - \mcP_\infty\|_\infty + t^{-1} \alt{\mu(t)}^{4/3} \mcG(t).$$
\end{lemma}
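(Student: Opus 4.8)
The plan is to rewrite $t^3\rho(t,x)$ in the translated reference frame and recognize that it decomposes into precisely the two quantities $\mcA_1$ and $\tilde{\mcA}_2$ introduced in the proof of Lemma \ref{LField}, so that the estimate follows from bounds already in hand. Concretely, the identity for $\rho$ recorded after \eqref{VPg} (taken with $v = 0$, equivalently the change of variables $w = x - vt$ in $\rho(t,x) = \sum_\alpha q_\alpha \int f^\alpha(t,x,v)\,dv = \sum_\alpha q_\alpha \int g^\alpha(t, x-vt, v)\,dv$) gives
$$t^3\rho(t,x) = \sum_{\alpha=1}^N q_\alpha \int g^\alpha\!\left(t, w, \tfrac{x-w}{t}\right) dw.$$
Since $F^\alpha(t,v) = \int g^\alpha(t,w,v)\,dw$ and $\mcP(t,v) = \sum_\alpha q_\alpha F^\alpha(t,v)$, adding and subtracting $\mcP(t, x/t)$ produces, with $\mcA_1$ and $\tilde{\mcA}_2$ as in \eqref{G1} and \eqref{G2tilde},
$$t^3\rho(t,x) - \mcP_\infty\!\left(\tfrac{x}{t}\right) = \mcA_1\!\left(t, \tfrac{x}{t}\right) + \tilde{\mcA}_2\!\left(t, \tfrac{x}{t}\right).$$

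Next I would bound the two terms pointwise in $x$. By \eqref{G1}, $|\mcA_1(t, x/t)| \le \|\mcP(t) - \mcP_\infty\|_\infty$ for every $x$. For the second term, the proof of Lemma \ref{LField} already shows $\|\tilde{\mcA}_2(t)\|_{L^\infty_v} \lesssim t^{-1}\mu(t)^{4/3}\mcG(t)$: one writes the difference $g^\alpha(t,w,v - w/t) - g^\alpha(t,w,v)$ as $\int_0^1 \tfrac{d}{d\theta} g^\alpha(t, w, v - \theta w/t)\,d\theta$, extracts a factor $t^{-1}|w|$ and $\|\nabla_v g^\alpha(t)\|_\infty \le \mcG(t)$, then integrates over $\{g^\alpha \neq 0\}$, using $|w| \lesssim \mu(t)^{1/3}$ there together with $|\{g^\alpha(t) \neq 0\}| \lesssim \mu(t)$ (the spatial support has measure $\lesssim \mu(t)$ and the velocity support is uniformly bounded by Lemma \ref{XVsupp}). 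Taking the supremum over $x \in \bfR^3$ of the displayed identity then yields
$$\sup_{x \in \bfR^3} \left| t^3\rho(t,x) - \mcP_\infty\!\left(\tfrac{x}{t}\right)\right| \lesssim \|\mcP(t) - \mcP_\infty\|_\infty + t^{-1}\mu(t)^{4/3}\mcG(t),$$
which is the claim.

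There is essentially no obstacle here; unlike the field estimates, no $L^1$–$L^\infty$ interpolation via Lemma \ref{LE} is invoked, because $\rho$ is a direct velocity average of $g^\alpha$ rather than a Riesz-type convolution of $\mcP$ — this is exactly why the exponent $\mu(t)^{4/3}$ survives rather than being improved to $\mu(t)$ as in Lemma \ref{LField}. The only point requiring care is the bookkeeping in the change of variables and the observation that the "spatial-variation" remainder is literally $\tilde{\mcA}_2(t,\cdot)$ evaluated at $v = x/t$, so that the $L^\infty_v$ bound from Lemma \ref{LField} applies verbatim.
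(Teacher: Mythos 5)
Your proposal is correct and follows essentially the same route as the paper: the same change of variables in the velocity integral, the same splitting of $t^3\rho(t,x)-\mcP_\infty(x/t)$ into the velocity-variation term (your $\tilde{\mcA}_2(t,x/t)$, the paper's term $I$) and the density-convergence term $\mcP(t,x/t)-\mcP_\infty(x/t)$, with identical bounds $t^{-1}\mu(t)^{4/3}\mcG(t)$ and $\|\mcP(t)-\mcP_\infty\|_\infty$. The only cosmetic difference is that you invoke the $\tilde{\mcA}_2$ estimate already recorded in the proof of Lemma~\ref{LField} rather than re-deriving it in place, as the paper does.
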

\begin{proof}
As for the field, we must rewrite this difference in terms of the translated distribution functions. To this end, we have
$$\rho(t,x) =  \sum_{\alpha = 1}^N \alt{q_\alpha} \int g^\alpha (t, x - ut, u) \ du,$$
and, upon performing the change of variables
$$y = x-ut$$
with respect to $u$, we find 
$$\rho(t,x) = \frac{1}{t^3}  \sum_{\alpha = 1}^N \alt{q_\alpha} \int g \left (t, y, \frac{x-y}{t} \right ) \ dy.$$
Hence, the difference of the densities can be split into two terms as
\begin{eqnarray*}
\left | t^3 \rho(t,x) - \mcP_\infty \left (\frac{x}{t} \right) \right | &\leq & \left |  \sum_{\alpha = 1}^N  \alt{q_\alpha}\int \left [g \left (t, y, \frac{x-y}{t} \right ) - g\left (t, y, \frac{x}{t} \right ) \right]\ dy \right | \\ 
& \ &  + \left | \mcP \left (t, \frac{x}{t} \right) - \mcP_\infty \left (\frac{x}{t} \right ) \right|\\
& =: & I + II.
\end{eqnarray*}

Using methods similar to the previous two lemmas, the first term satisfies
$$I \lesssim t^{-1}  \sum_{\alpha = 1}^N \| \nabla_vg^\alpha(t) \|_\infty  \int_{\alt{\{g \neq0 \}}} |y| \ dy \lesssim t^{-1} \alt{\mu(t)}^{4/3}\mcG(t),$$
while the second term is straightforward, namely
$$II \leq 
\| \mcP(t) - \mcP_\infty \|_\infty.$$
Combining these estimates then yields the stated result.
\end{proof}

Finally, we estimate the current density in a similar fashion.
\begin{lemma}
\label{LCurrent}
We have
$$\sup_{x\in \bfR^3} \left | t^3 j(t,x) - \frac{x}{t} \mcP_\infty\left ( \frac{x}{t} \right) \right | \lesssim
\| \mcP(t) - \mcP_\infty \|_\infty + 
t^{-1}\alt{\mu(t)}^{4/3} \mcG(t).$$
\end{lemma}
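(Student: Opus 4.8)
The plan is to follow the scheme of Lemma~\ref{LDensity}, passing to the translated distribution functions $g^\alpha$ and isolating three contributions: a term governed by the velocity regularity $\mcG(t)$, a genuinely new term produced by the velocity weight in the current, and a term reducing to $\|\mcP(t)-\mcP_\infty\|_\infty$. First I would use $f^\alpha(t,x,v)=g^\alpha(t,x-vt,v)$ together with the substitution $y=x-vt$ (so that $dv=t^{-3}\,dy$) to write
$$t^3 j(t,x)=\sum_{\alpha=1}^N q_\alpha\int\frac{x-y}{t}\,g^\alpha\left(t,y,\frac{x-y}{t}\right)dy.$$
Setting $v_0=x/t$ and using $\tfrac{x-y}{t}=v_0-\tfrac{y}{t}$, I would then add and subtract $\int(v_0-\tfrac yt)g^\alpha(t,y,v_0)\,dy$ and split
$$t^3 j(t,x)-\frac{x}{t}\mcP_\infty\left(\frac{x}{t}\right)=I_a+I_b+II,$$
where, since $\int g^\alpha(t,y,v_0)\,dy=F^\alpha(t,v_0)$,
$$I_a=\sum_{\alpha=1}^N q_\alpha\int\Bigl(v_0-\frac{y}{t}\Bigr)\Bigl[g^\alpha\left(t,y,v_0-\frac{y}{t}\right)-g^\alpha(t,y,v_0)\Bigr]dy,$$
$$I_b=-\frac1t\sum_{\alpha=1}^N q_\alpha\int y\,g^\alpha(t,y,v_0)\,dy,\qquad II=v_0\bigl(\mcP(t,v_0)-\mcP_\infty(v_0)\bigr).$$

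For $I_a$ I would write $g^\alpha(t,y,v_0-\tfrac yt)-g^\alpha(t,y,v_0)=-\tfrac1t\int_0^1(y\cdot\nabla_v g^\alpha)(t,y,v_0-\theta y/t)\,d\theta$, observe that wherever the integrand is nonzero the velocity argument lies in the uniformly bounded velocity support of $g^\alpha$ from Lemma~\ref{XVsupp}, so that $|v_0-y/t|\lesssim 1$, and then bound the $y$-integral by the spatial support estimate exactly as for the term $I$ in the proof of Lemma~\ref{LDensity}, which yields $|I_a|\lesssim t^{-1}\alt{\mu(t)}^{4/3}\mcG(t)$. The term $I_b$ has no analogue in the density estimate and is the one new feature; it is controlled directly, $|I_b|\lesssim t^{-1}\sum_\alpha\|g^\alpha(t)\|_\infty\int_{\bS_x(t,v_0)}|y|\,dy\lesssim t^{-1}\alt{\mu(t)}^{4/3}$, which is absorbed into $t^{-1}\alt{\mu(t)}^{4/3}\mcG(t)$ since $\mcG(t)\ge1$. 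For $II$, the weight $v_0=x/t$ is unbounded, but $\mcP(t,v_0)-\mcP_\infty(v_0)$ vanishes unless $v_0$ lies in a fixed bounded set, because $\mathrm{supp}(\mcP_\infty)=\Om_v$ is compact by Corollary~\ref{P} and the velocity support of each $g^\alpha(t)$ is uniformly bounded by Lemma~\ref{XVsupp}; hence $|II|\lesssim\|\mcP(t)-\mcP_\infty\|_\infty$. Summing the three bounds gives the claim.

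The argument is essentially routine once the machinery of Lemmas~\ref{XVsupp}, \ref{Xsupp} and Corollary~\ref{P} is in place; the only points that require care are keeping the unbounded weight $v_0=x/t$ under control---both in the target term $\tfrac xt\mcP_\infty(\tfrac xt)$ and inside $I_a$---through uniform boundedness of the velocity supports, and checking that the extra contribution $I_b$ coming from the velocity factor does not exceed the stated rate. I expect the bookkeeping around these support restrictions to be the only nontrivial aspect; no estimates beyond those already established are needed.
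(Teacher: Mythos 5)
Your proposal is correct and follows essentially the same route as the paper: the same change of variables to $g^\alpha$, the same three-way split into a $y/t$-weighted term ($\lesssim t^{-1}\mu(t)^{4/3}$, absorbed since $\mcG\geq 1$), a velocity finite-difference term ($\lesssim t^{-1}\mu(t)^{4/3}\mcG(t)$), and the $\mcP-\mcP_\infty$ term. The only cosmetic difference is that you distribute the weight $\frac{x-y}{t}$ slightly differently among the pieces and control the unbounded factor $x/t$ via the support restrictions term by term, whereas the paper simply notes at the outset that both $j(t,x)$ and $\mcP_\infty(x/t)$ vanish for $|x|\gtrsim t$.
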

\begin{proof}
First, we note that due to Lemma \ref{XVsupp} and the compact support of $\mcP_\infty$, for $t$ sufficiently large there is $C > 0$ such that  $j(t,x) = \mcP_\infty \left (\frac{x}{t} \right ) = 0$ when $|x| \geq Ct$. Hence, it suffices to take $|x| \lesssim t$ throughout.
Next, we perform the same change of variables as in Lemma \ref{LDensity} to arrive at
$$j(t,x) = \frac{1}{t^3}  \sum_{\alpha = 1}^N \alt{q_\alpha} \int \left ( \frac{x-y}{t} \right ) g^\alpha \left (t, y, \frac{x-y}{t} \right ) \ dy.$$
Hence, the difference can be split into three terms as
\begin{eqnarray*}
\left | t^3 j(t,x) - \frac{x}{t}\mcP_\infty\left ( \frac{x}{t} \right) \right | &\leq & \left | \sum_{\alpha = 1}^N \alt{q_\alpha} \int \frac{y}{t}g^\alpha \left (t, y, \frac{x-y}{t} \right ) \ dy \right |\\
& \ & +  \left |  \sum_{\alpha = 1}^N \alt{q_\alpha} \int \frac{x}{t} \left [ g^\alpha \left (t, y, \frac{x-y}{t} \right )  - g^\alpha \left ( t, y, \frac{x}{t} \right) \right ] \ dy\right | \\
& \ & + \left | \frac{x}{t} \left (\mcP \left (t,\frac{x}{t} \right) - \mcP_\infty \left (\frac{x}{t} \right) \right ) \right|\\
& =: & I + II + III.
\end{eqnarray*}

The first term is estimated using the $L^\infty$ bound on $g^\alpha$ so that
$$I \lesssim t^{-1}  \sum_{\alpha = 1}^N \| g^\alpha(t) \|_\infty  \int_{\alt{\{ g^\alpha \neq 0\}}} |y| \ dy \lesssim t^{-1} \alt{\mu(t)}^{4/3}.$$
The second term has similar structure, but involves derivatives of $g^\alpha$, and we find
$$II\lesssim |x| t^{-2}  \sum_{\alpha = 1}^N \| \nabla_v g^\alpha(t) \|_\infty  \int_{\alt{\{ g^\alpha \neq 0\}}} |y| \ dy \lesssim t^{-1} \alt{\mu(t)}^{4/3} \mcG(t).$$
Finally, the third term is straightforward and yields
$$III \lesssim |x| t^{-1} \| \mcP(t) - \mcP_\infty \|_\infty \lesssim  \| \mcP(t) - \mcP_\infty \|_\infty .$$
Combining these estimates then yields the stated result.
\end{proof}

\section{Spatial Limits and Modified Scattering }
\label{sect:modscatt}

With the field and derivative estimates solidified, we prove that the distribution function scatters to a limiting value as $t \to \infty$ along a specific trajectory in phase space that may differ from its linear profile, and this is known as ``modified scattering''.
Many of the ideas in this direction arise from \cite{Ionescu}. 
We also mention \cite{CK}, which arrived at a modified scattering result but without an explicit representation for the associated trajectories.
First, \alt{we remind the reader} that using the field estimate of Lemma \ref{XVsupp} in Lemma \ref{Xsupp} produces
\begin{equation}
\label{Y}
|\mcY(t)| \lesssim \ln(t)
\end{equation} 
for any $\alpha = 1, ..., N$\alt{, where the implicit constant in this inequality may depend upon fixed $\tau \geq 0$ and $(x,v) \in \mcU$}. 
Thus, the characteristics of $g^\alpha$ may grow unbounded in time and require an additional logarithmic correction to construct spatial trajectories that converge as $t \to \infty$. To this point, we define for $\tau, t \geq 1$
\begin{align*}
\mcZ(t,\tau,x,v) & = \mcY(t, \tau,x,v) + \alt{\frac{q_\alpha}{m_\alpha}} \ln(t) E_\infty(\mcV(t, 
\tau, x,v))\\
& = \mcX(t,\tau, x, v) - t \mcV(t, \tau, x, v) + \alt{\frac{q_\alpha}{m_\alpha}} \ln(t) E_\infty(\mcV(t, 
\tau, x,v)).
\end{align*}
Then, we have the following result as $t \to \infty$.
\begin{lemma}
\label{LModChar}
For any $\alpha = 1, ..., N$, $\tau \geq 1$ and $(x, v) \in \mcU$, the limiting modified spatial characteristics $\mcZ_\infty$ defined by
$$\mcZ_\infty(\tau, x, v) :=  \lim_{t \to \infty} \mcZ(t, \tau, x, v)$$ 
exist, and are $C^2$, bounded, and invariant under the flow.
Additionally, we have the convergence estimates
$$|\mcZ(t, \tau, x, v) - \mcZ_\infty(\tau, x, v) | \lesssim t^{-1}\ln^5(t).$$
and
$$\left |\mcZ_\infty(\tau, x, v) - \left (x-v\tau + \alt{\frac{q_\alpha}{m_\alpha}} \ln(\tau) E_\infty(v) \right ) \right | \lesssim \tau^{-1}\ln^5(\tau).$$
Furthermore, there is $T_2 > T_1$ such that for all $\tau \geq T_2$ and $(x, v) \in \mcU$, we have
$$\left |\det \left (\frac{\partial (\mcZ_\infty, \mcV_\infty)}{\partial(x,v)}(\tau, x, v) \right ) \right | \geq \frac{1}{2}.$$
Consequently, for $\tau \geq T_2$ and $(x, v) \in \mcU$, the $C^2$ mapping $(x,v) \mapsto (\mcZ_\infty, \mcV_\infty)(\tau, x, v)$ is injective and invertible.
\end{lemma}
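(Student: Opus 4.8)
The plan is to derive everything from one computation of $\dot{\mcZ}(t)$, whose two leading terms cancel, so that the remainder — and, after differentiation, its $(x,v)$-derivatives — are integrable in time; the stated properties then follow by integrating in $s\in[t,\infty)$.

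\emph{Existence, rate, and comparison with the initial value.} Differentiating the definition of $\mcZ$ and using \eqref{char} and \eqref{gchar} (recall $\mcY(t)+t\mcV(t)=\mcX(t)$) gives
\[\dot{\mcZ}(t)=-\frac{q_\alpha}{m_\alpha}\,t\left[E(t,\mcX(t))-t^{-2}E_\infty(\mcV(t))\right]+\frac{q_\alpha^2}{m_\alpha^2}\,\ln(t)\,\nabla_v E_\infty(\mcV(t))\cdot E(t,\mcX(t)).\]
Writing $\mcX(t)=\mcY(t)+t\mcV(t)$ with $|\mcY(t)|\lesssim\ln(t)$ by \eqref{Y}, I apply the second estimate of Lemma \ref{LField} with $\beta(t)=C\ln(t)$; together with Corollary \ref{P}, \eqref{Sx} and \eqref{G} this yields $|t^2E(t,\mcX(t))-E_\infty(\mcV(t))|\lesssim t^{-1}\ln^5(t)$, so the first term of $\dot{\mcZ}$ is $O(t^{-2}\ln^5(t))$, while the second is $O(t^{-2}\ln(t))$ since $\|\nabla_v E_\infty\|_\infty<\infty$ (Lemma \ref{LE}) and $\|E(t)\|_\infty\lesssim t^{-2}$ (Lemma \ref{XVsupp}). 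Hence $|\dot{\mcZ}(t)|\lesssim t^{-2}\ln^5(t)$ is integrable, so $\mcZ_\infty(\tau,x,v)=\mcZ(\tau,\tau,x,v)+\int_\tau^\infty\dot{\mcZ}(s)\,ds$ exists, is bounded, and $|\mcZ(t)-\mcZ_\infty|\le\int_t^\infty|\dot{\mcZ}(s)|\,ds\lesssim t^{-1}\ln^5(t)$. Since $\mcY(\tau,\tau,x,v)=x-v\tau$ and $\mcV(\tau,\tau,x,v)=v$, we have $\mcZ(\tau,\tau,x,v)=x-v\tau+\frac{q_\alpha}{m_\alpha}\ln(\tau)E_\infty(v)$, so specializing the convergence estimate to $t=\tau$ gives the second displayed bound of the lemma. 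Invariance under the flow follows from the group property $\mcZ(t,s,\mcX(s,\tau,x,v),\mcV(s,\tau,x,v))=\mcZ(t,\tau,x,v)$ upon letting $t\to\infty$.

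\emph{$C^2$ regularity.} Each $\mcZ(t,\tau,\cdot,\cdot)$ is $C^2$ on $\mcU$, so it suffices to show $\partial^k_{(x,v)}\dot{\mcZ}(t)$ is integrable in $t$, uniformly on $\mcU$, for $|k|\le 2$; then $\partial^k_{(x,v)}\mcZ(t)$ converges uniformly and $\mcZ_\infty\in C^2$. Differentiating the formula for $\dot{\mcZ}$, the delicate contribution is $-\frac{q_\alpha}{m_\alpha}t\bigl[\nabla_x E(t,\mcX(t))\tfrac{\partial\mcX}{\partial v}(t)-t^{-2}\nabla_v E_\infty(\mcV(t))\tfrac{\partial\mcV}{\partial v}(t)\bigr]$, which term-by-term is only $O(t^{-1}\ln t)$ because $\bigl|\tfrac{\partial\mcX}{\partial v}(t)\bigr|\lesssim t$ by Lemma \ref{LDchar}. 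Writing $\tfrac{\partial\mcX}{\partial v}(t)=t\tfrac{\partial\mcV}{\partial v}(t)+\tfrac{\partial\mcY}{\partial v}(t)$, its leading part becomes $\bigl[t\nabla_x E(t,\mcX(t))-t^{-2}\nabla_v E_\infty(\mcV(t))\bigr]\tfrac{\partial\mcV}{\partial v}(t)=O(t^{-3}\ln^7 t)$ by the second estimate of Lemma \ref{LDE} with $\beta(t)=C\ln(t)$, while the remaining part $\nabla_x E(t,\mcX(t))\tfrac{\partial\mcY}{\partial v}(t)=O(t^{-3}\ln^3 t)$ using $\bigl|\tfrac{\partial\mcY}{\partial v}(t)+\tau\mathbb{I}\bigr|\lesssim\ln^2 t$ (Lemma \ref{dY}); hence the delicate contribution is $O(t^{-2}\ln^7 t)$, integrable. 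The $x$-derivative is identical with $\tfrac{\partial\mcX}{\partial x}(t)=t\tfrac{\partial\mcV}{\partial x}(t)+\tfrac{\partial\mcY}{\partial x}(t)$ and the corresponding parts of Lemmas \ref{LDchar} and \ref{dY}, and all remaining terms of $\partial_{(x,v)}\dot{\mcZ}$ are $O(t^{-2}\,\mathrm{polylog})$ using $\|E(t)\|_\infty\lesssim t^{-2}$ and $\|\nabla_x E(t)\|_\infty\lesssim t^{-3}\ln t$. The second-order derivatives are handled the same way, using the second variational equations for $\mcX,\mcV,\mcY$ (bounded by the arguments of Lemmas \ref{LDchar} and \ref{dY}), a bound on $\nabla_x^2 E(t)$ from the derivative-transfer device in the proof of Lemma \ref{LDE}, and the finiteness of $\nabla_v^2 E_\infty$ (Lemma \ref{LE}). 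The \textbf{main obstacle} is exactly this bookkeeping: because the derivatives of the spatial characteristics grow linearly in $t$, neither $\dot{\mcZ}$ nor its $(x,v)$-derivatives are integrable term by term, and one must check that the cancellation of the first step survives differentiation — this is precisely why the refined estimates of Lemmas \ref{LField} and \ref{LDE} are needed rather than the crude decay rates of Lemma \ref{XVsupp}.

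\emph{The Jacobian and invertibility.} Differentiating $\mcZ(\tau,\tau,x,v)=x-v\tau+\frac{q_\alpha}{m_\alpha}\ln(\tau)E_\infty(v)$ and using the uniform convergence of $\partial_{(x,v)}\mcZ(t)$ with the tail bounds from the previous step, one obtains $\tfrac{\partial\mcZ_\infty}{\partial x}(\tau)=\mathbb{I}+O(\tau^{-1}\ln^7\tau)$ and $\tfrac{\partial\mcZ_\infty}{\partial v}(\tau)=-\tau\mathbb{I}+\frac{q_\alpha}{m_\alpha}\ln(\tau)\nabla_v E_\infty(v)+O(\tau^{-1}\ln^7\tau)$. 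Combining this with $\tfrac{\partial\mcV_\infty}{\partial x}(\tau)=O(\tau^{-2}\ln\tau)$ and $\tfrac{\partial\mcV_\infty}{\partial v}(\tau)=\mathbb{I}+O(\tau^{-1}\ln\tau)$ from Lemma \ref{Vinfinvert}, the $6\times6$ Jacobian $\tfrac{\partial(\mcZ_\infty,\mcV_\infty)}{\partial(x,v)}(\tau,x,v)$ is a block matrix whose diagonal blocks $\partial_x\mcZ_\infty$ and $\partial_v\mcV_\infty$ converge to $\mathbb{I}$, whose lower-left block $\partial_x\mcV_\infty$ converges to $0$, and whose upper-right block $\partial_v\mcZ_\infty$ is $O(\tau)$; expanding the determinant via the Schur complement of the (invertible, for $\tau$ large) lower-right block shows the mixed term is $O(\tau^{-1}\ln\tau)\to 0$, so the determinant converges to $1$ uniformly on $\mcU$ as $\tau\to\infty$. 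Hence there is $T_2>T_1$ with $\bigl|\det\tfrac{\partial(\mcZ_\infty,\mcV_\infty)}{\partial(x,v)}(\tau,x,v)\bigr|\ge\tfrac12$ for all $\tau\ge T_2$ and $(x,v)\in\mcU$, and the injectivity and invertibility of $(x,v)\mapsto(\mcZ_\infty,\mcV_\infty)(\tau,\cdot,\cdot)$ on $\mcU$ then follow exactly as in the proof of Lemma \ref{Vinfinvert}.
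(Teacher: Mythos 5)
Your proposal is correct and follows essentially the same route as the paper: the same computation of $\dot{\mcZ}$ exploiting the cancellation between $tE(t,\mcX(t))$ and $t^{-1}E_\infty(\mcV(t))$ via Lemma \ref{LField} with $\beta(t)=C\ln(t)$, the same decomposition $\frac{\partial\mcX}{\partial v}=t\frac{\partial\mcV}{\partial v}+\frac{\partial\mcY}{\partial v}$ combined with Lemma \ref{LDE} for the derivative bounds, and the same Schur-complement argument for the Jacobian. Your explicit identification of why the term-by-term estimates fail and the cancellation must survive differentiation is a faithful (and slightly more detailed) account of what the paper's proof does.
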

\begin{proof}
We begin by using \eqref{Sx}, \eqref{G}, \eqref{Y}, and Corollary \ref{P} within Lemmas \ref{LField} and \ref{LDE} (where $\beta(t) = C\ln(t)$) to deduce
\begin{equation}
\label{E_est}
\left |t^2 E \left (t,\mcY(t) + t\mcV(t) \right ) - E_\infty(\mcV(t)) \right |  \lesssim t^{-1} \ln^5(t)
\end{equation}
and
\begin{equation}
\label{DE_est}
\left | t^3 \nabla_xE \left (t,\mcY(t) + t\mcV(t) \right ) - \nabla_v E_\infty(\mcV(t)) \right | \lesssim t^{-1} \ln^7(t),
\end{equation}
respectively.

Now, to prove the convergence result, we merely need to demonstrate the integrability of $| \dot{\mcZ}(t) |$.
Indeed, we use Lemma \ref{XVsupp} and \eqref{E_est} to arrive at
\begin{eqnarray*}
\left | \dot{\mcZ}(t) \right | & = & \left |-\alt{\frac{q_\alpha}{m_\alpha}} t E \left (t,\mcY(t) + t\mcV(t) \right ) + \alt{\frac{q_\alpha}{m_\alpha}} t^{-1} E_\infty(\mcV(t)) \right .\\
& & \quad  + \ln(t) \nabla E_\infty(\mcV(t)) E(t, \mcY(t) + t\mcV(t))\biggr |\\
& \lesssim & t^{-1} \left |t^2 E \left (t,\mcY(t) + t\mcV(t) \right ) - E_\infty(\mcV(t)) \right | + \ln(t) \Vert \nabla E_\infty \Vert_\infty \Vert E(t)\Vert_\infty\\
& \lesssim & t^{-2}\ln^5(t).
\end{eqnarray*}
Therefore, using the initial conditions, $\mcZ(t,\tau,x,v)$ converges to a function 
$$\mcZ_\infty(\tau, x,v):= x-v\tau + \alt{\frac{q_\alpha}{m_\alpha}} \ln(\tau) E_\infty(v) + \int_\tau^\infty \dot{\mcZ}(s,\tau,x,v) \ ds $$ 
as $t \to \infty$, and the stated error estimates follow immediately.

Estimates on derivatives of $\mcZ_\infty$ are analogous to that of Lemma \ref{Vinfinvert}. 
Using the estimate of field derivatives from Lemma \ref{DEf} within Lemma \ref{dY} produces
\begin{equation}
\label{dY_est}
\left | \frac{\partial \mcY}{\partial x}(t,\tau, x, v) \right | \lesssim \ln^2(t) \qquad \mathrm{and} \qquad
\left | \frac{\partial \mcY}{\partial v}(t,\tau, x, v) \right | \lesssim \ln^2(t).
\end{equation}
Therefore, taking an $x$ derivative in the above expression for $\dot{\mcZ}(t)$, we use Lemmas \ref{XVsupp}, \ref{DEf}, and \ref{LDchar}, as well as \eqref{DE_est} and \eqref{dY_est} to find
\begin{eqnarray*}
\left |\frac{\partial \dot{\mcZ}}{\partial x} (t) \right | & \leq & \left |\alt{\frac{q_\alpha}{m_\alpha}} t \nabla_x E(t, \mcY(t) + t\mcV(t)) \left ( \frac{\partial \mcY}{\partial x}(t) + t  \frac{\partial \mcV}{\partial x}(t) \right ) - \alt{\frac{q_\alpha}{m_\alpha}} t^{-1} \nabla_vE_\infty(\mcV(t)) \frac{\partial \mcV}{\partial x}(t) \right |\\
& \ & + \ln(t) \left |\nabla_v^2 E_\infty(\mcV(t)) \right |  \left | E(t, \mcY(t) + t \mcV(t)) \right | \left | \frac{\partial \mcV}{\partial x}(t)  \right | \\
& \ & + \ln(t) \left |\nabla_v E_\infty(\mcV(t)) \right |  \left | \nabla_x E(t, \mcY(t) + t \mcV(t)) \right | \left | \frac{\partial \mcY}{\partial x}(t) + t  \frac{\partial \mcV}{\partial x}(t) \right | \\
& \leq &  Ct^{-1} \left | t^3 \nabla_xE \left (t,\mcY(t) + t\mcV(t) \right ) - \nabla_v E_\infty(\mcV(t)) \right | \left | \frac{\partial \mcV}{\partial x}(t) \right | + Ct^{-2}\ln^3(t)\\
& \lesssim & t^{-2}\ln^7(t).
\end{eqnarray*}
Integrating then yields
$$\left |\frac{\partial \mcZ_\infty}{\partial x} (\tau,x,v) - \mathbb{I} \right | \leq \int_\tau^\infty \left | \frac{\partial \dot{\mcZ}}{\partial x}(s) \right | \ ds  \lesssim \tau^{-1}\ln^7(\tau).$$
We repeat this argument for $v$ derivatives using the second estimate of \eqref{dY_est} to similarly arrive at
$$\left |\frac{\partial \dot{\mcZ}}{\partial v} (t) \right | \lesssim t^{-2}\ln^7(t),$$
and thus 
$$\left |\frac{\partial \mcZ_\infty}{\partial v} (\tau,x,v) + \tau\mathbb{I} - \alt{\frac{q_\alpha}{m_\alpha}} \ln(\tau) \nabla E_\infty(v) \right | \lesssim \tau^{-1}\ln^7(\tau).$$
This bound then implies $\left |\frac{\partial \mcZ_\infty}{\partial v} (\tau,x,v) \right| \lesssim \tau.$

Finally, using these estimates and those of Lemma \ref{Vinfinvert}, we can prove the lower bound on the determinant.
In particular, due to the block structure of the matrix we have
$$J(\tau) := \frac{\partial (\mcZ_\infty, \mcV_\infty)}{\partial(x,v)}(\tau, x, v) = 
\begin{bmatrix}
\frac{\partial \mcZ_\infty}{\partial x} (\tau,x,v) & \frac{\partial \mcZ_\infty}{\partial v} (\tau,x,v)\\ \ \\ \frac{\partial \mcV_\infty}{\partial x} (\tau,x,v)& \frac{\partial \mcV_\infty}{\partial v} (\tau,x,v)
\end{bmatrix}
=:
\begin{bmatrix}
A(\tau) & B(\tau)\\ C(\tau) & D(\tau)
\end{bmatrix}$$
for $\tau \geq 1$ and $(x,v) \in \mcU$.
Due to Lemma \ref{Vinfinvert}, taking $\tau > T_1$ ensures that $D(\tau)$ is invertible, and as $D(\tau) \to \mathbb{I}$ as $\tau \to \infty$, we have $\det(D) \to 1$ as $\tau \to \infty$ due to the continuity of the determinant operator.
Therefore, using the Schur complement we find
$$\det(J) = \det(D) \det \left ( A - BD^{-1}C \right )$$
for $\tau > T_1$.
Using the estimates on $A(\tau)$ and $B(\tau)$ given above and those of Lemma \ref{Vinfinvert}, we have
$$\left | A(\tau) - \mathbb{I} \right | \lesssim \tau^{-1} \ln^7(\tau) \qquad \mathrm{and} \qquad \left | BD^{-1} C \right | \leq |B||D|^{-1}|C| \lesssim \tau^{-1} \ln(\tau).$$
Thus, $A - BD^{-1} C \to \mathbb{I}$ as $\tau \to \infty$, which implies $\det(J) \to 1$ as $\tau \to \infty$.
Hence, there is $T_2 > T_1$ such that
$$\left |\det \left (\frac{\partial (\mcZ_\infty, \mcV_\infty)}{\partial(x,v)}(\tau, x, v) \right ) \right | \geq \frac{1}{2}$$
for all $\tau \geq T_2$, and the regularity and invertibility of the map $(x,v) \mapsto(\mcZ_\infty, \mcV_\infty)$ follows.
\end{proof}

Analogous to the velocity limits, we next define the collection of all limiting positions on $\alt{\bS_f(t)}$.
Due to the invariance under the flow defined by \eqref{char}, we have
$$\left \{ \mcZ_\infty(\tau, x, v) : (x, v) \in \alt{\bS_f(\tau)} \right \} = \left \{ \mcZ_\infty(1, x, v) : (x, v) \in \alt{\bS_f(1)} \right \}$$
for all $\tau \geq 0$. Hence, for any $\alpha = 1, ..., N$ define
$$\Omz := \left \{ \mcZ_\infty(1, x, v) : (x, v) \in \alt{\bS_f(1)} \right \}.$$
Additionally, as $\mcZ_\infty (1,x, v)$ is continuous due to Lemma \ref{LModChar},  its range $\Om_z^\alpha$ on the compact set $\alt{\bS_f(1)}$ is compact.
Now that we have shown the convergence of modified spatial characteristics, we can prove the convergence of the particle distribution functions along these trajectories.

\begin{lemma}
\label{LModScattering}
For every $\alpha = 1, ..., N$ define $\Om^\alpha = \Omz \times \Omv$.
There exists $f^\alpha_\infty \in C_c^2(\bfR^6)$ with $\mathrm{supp}(f^\alpha_\infty) = \Om^\alpha$ such that
$$h^\alpha(t,x,v) = f^\alpha \left (t,x + vt - \alt{\frac{q_\alpha}{m_\alpha}} \ln(t) E_\infty(v), v \right)$$
satisfies 
$$h^\alpha(t,x, v) \to f^\alpha_\infty(x,v)$$ 
uniformly as $t \to \infty$.
\alt{Moreover}, we have the convergence estimate
$$\sup_{(x,v) \in \bfR^6} \left | f^\alpha \left (t,x + vt - \alt{\frac{q_\alpha}{m_\alpha}} \ln(t)E_\infty(v), v \right ) - f^\alpha_\infty(x,v) \right |  \lesssim t^{-1}\ln^5(t),$$
and the limiting distribution preserves the \alt{particle number, velocity}, and energy of the system, i.e.
$$\iint f^\alpha_\infty(x,v) \ dvdx = \mcM^\alpha, \quad \iint v f^\alpha_\infty(x,v) \ dvdx = \mcJ^\alpha, \quad \mathrm{and} \quad 
\sum_{\alpha = 1}^N \iint \frac{1}{2} \alt{m_\alpha} |v|^2 f^\alpha_\infty(x,v) \ dv dx= \mcEVP.$$
\end{lemma}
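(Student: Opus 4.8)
The plan is to exploit that $h^\alpha$ is precisely the distribution function carried along the modified characteristics. Since
$h^\alpha(t,x,v)=g^\alpha\!\left(t,\,x-\tfrac{q_\alpha}{m_\alpha}\ln(t)E_\infty(v),\,v\right)$,
and $g^\alpha$ is constant along the flow $(\mcY,\mcV)$ of \eqref{gcharsys} by Lemma \ref{MPP}, while $\mcZ=\mcY+\tfrac{q_\alpha}{m_\alpha}\ln(t)E_\infty(\mcV)$, I would first record the identity
$$h^\alpha\bigl(t,\mcZ(t,\tau,x,v),\mcV(t,\tau,x,v)\bigr)=g^\alpha\bigl(t,\mcY(t,\tau,x,v),\mcV(t,\tau,x,v)\bigr)=f^\alpha(\tau,x,v)$$
for all $t\ge\tau\ge1$. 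Fixing $\tau=1$, this also shows $\mathrm{supp}\,h^\alpha(t)$ is contained in the image of $\bS_f(1)$ under the ($C^2$-diffeomorphic) map $(x,v)\mapsto(\mcZ(t,1,x,v),\mcV(t,1,x,v))$; since $\mcZ(t,1,\cdot)$ and $\mcV(t,1,\cdot)$ converge uniformly on the compact set $\bS_f(1)$ (Lemmas \ref{L6} and \ref{LModChar}), their ranges stay inside one fixed compact set $K$ for all $t\ge1$, so the supports of $h^\alpha(t)$ are uniformly bounded.

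Next I would establish uniform convergence by showing $\|\partial_t h^\alpha(t)\|_\infty$ is time-integrable. Differentiating the boxed identity in $t$ (its left side is constant along the flow) gives, at $(\mcZ(t),\mcV(t))$,
$$\partial_t h^\alpha=-\dot{\mcZ}(t)\cdot\nabla_x h^\alpha-\dot{\mcV}(t)\cdot\nabla_v h^\alpha,$$
and every point where $\partial_t h^\alpha(t,\cdot)\neq0$ is of this form (since $\partial_t h^\alpha$ is a bounded-coefficient combination of $\nabla_x g^\alpha$ and $\nabla_v g^\alpha$ at the shifted point, it vanishes wherever those do, i.e.\ off $\bS_g(t)$). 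Then I would insert $|\dot{\mcZ}(t)|\lesssim t^{-2}\ln^5(t)$ from the proof of Lemma \ref{LModChar}, $|\dot{\mcV}(t)|=\tfrac{|q_\alpha|}{m_\alpha}|E(t,\mcX(t))|\lesssim t^{-2}$ from Lemma \ref{XVsupp}, $\|\nabla_x h^\alpha(t)\|_\infty=\|\nabla_x f^\alpha(t)\|_\infty\lesssim1$ from Lemma \ref{DEf}, and the chain-rule identity $\nabla_v h^\alpha=\nabla_v g^\alpha-\tfrac{q_\alpha}{m_\alpha}\ln(t)\,\nabla_vE_\infty(v)\,\nabla_x g^\alpha(t,\cdot)$, which with Lemmas \ref{Dvg}, \ref{DEf} and boundedness of $\nabla_vE_\infty$ gives $\|\nabla_v h^\alpha(t)\|_\infty\lesssim\mcG(t)+\ln(t)\lesssim\ln^2(t)$. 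This yields $\|\partial_t h^\alpha(t)\|_\infty\lesssim t^{-2}\ln^5(t)$, so $h^\alpha(t)$ is uniformly Cauchy; the uniform limit $f^\alpha_\infty$ exists and is bounded, and integrating gives $\|h^\alpha(t)-f^\alpha_\infty\|_\infty\lesssim t^{-1}\ln^5(t)$.

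To upgrade $f^\alpha_\infty$ to $C_c^2$, I would pass to the limit $t\to\infty$ in the identity of the first paragraph to obtain
$$f^\alpha_\infty\bigl(\mcZ_\infty(\tau,x,v),\mcV_\infty(\tau,x,v)\bigr)=f^\alpha(\tau,x,v),$$
then fix $\tau\ge T_2$ so that Lemma \ref{LModChar} makes $(x,v)\mapsto(\mcZ_\infty,\mcV_\infty)(\tau,x,v)$ a $C^2$ diffeomorphism with $C^2$ inverse on a neighbourhood of $\bS_f(\tau)$. Composing $f^\alpha(\tau,\cdot)\in C_c^2$ with that inverse and extending by zero (valid in $C^2$ since $f^\alpha(\tau,\cdot)$ vanishes to second order near $\partial\bS_f(\tau)$) shows $f^\alpha_\infty\in C_c^2(\bfR^6)$, supported in $\Om^\alpha=\Omz\times\Omv$ because $\mcZ_\infty(\tau,\cdot)$ and $\mcV_\infty(\tau,\cdot)$ carry $\bS_f(\tau)$ into $\Omz$ and $\Omv$ (using flow-invariance of these limits). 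Finally, the conservation laws follow from the uniform convergence together with the uniform support bound $K$: the shift $x\mapsto x-vt+\tfrac{q_\alpha}{m_\alpha}\ln(t)E_\infty(v)$ at fixed $v$ has unit Jacobian, so $\iint h^\alpha(t)\,dv\,dx=\iint f^\alpha(t)\,dv\,dx=\mcM^\alpha$ and $\iint v\,h^\alpha(t)\,dv\,dx=\mcJ^\alpha$ for all $t$, which pass to the limit; and $\sum_\alpha\iint\tfrac12 m_\alpha|v|^2 h^\alpha(t)\,dv\,dx=\mcEVP-\tfrac12\|E(t)\|_2^2\to\mcEVP$ since $\|E(t)\|_2\lesssim t^{-1/2}\to0$, as used already in Lemma \ref{Funif}. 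I expect the main obstacle to be the $O(t^{-2}\ln^5 t)$ bound on $\partial_t h^\alpha$: it works only because the $\ln(t)$-correction in $\mcZ$ is tailored to cancel the leading growth of $\nabla_v g^\alpha$ and the leading behavior of $tE(t,\cdot)$, leaving exactly the logarithmically-corrected remainders from Lemmas \ref{LField}, \ref{LDE} and \ref{LModChar} rather than terms decaying only like $t^{-1}$; tracking the logarithmic powers carefully is the delicate part.
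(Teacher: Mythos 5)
Your proof is correct and relies on the same core quantitative input as the paper --- the integrable bound $\|\partial_t h^\alpha(t)\|_\infty \lesssim t^{-2}\ln^5(t)$, which your Lagrangian computation $\partial_t h^\alpha = -\dot{\mcZ}\cdot\nabla_x h^\alpha - \dot{\mcV}\cdot\nabla_v h^\alpha$ reproduces exactly (expanding $\dot{\mcZ}$, $\nabla_x h^\alpha=\nabla_x g^\alpha$ and $\nabla_v h^\alpha=\nabla_v g^\alpha-\frac{q_\alpha}{m_\alpha}\ln(t)\nabla_v E_\infty^{T}\nabla_x g^\alpha$, the $\ln(t)$ terms cancel and one recovers the paper's Eulerian identity $\partial_t h^\alpha = \frac{q_\alpha}{m_\alpha}t^{-1}(t^2E-E_\infty)\cdot\nabla_x g^\alpha - \frac{q_\alpha}{m_\alpha}E\cdot\nabla_v g^\alpha$). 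Where you genuinely diverge is in identifying the limit and proving its properties. The paper first establishes weak convergence by changing variables $(\tilde x,\tilde v)=(\mcX(T,t,\cdot),\mcV(T,t,\cdot))$ in $\iint\psi\,h^\alpha$, obtaining $f^\alpha_\infty = f^\alpha(T,\cdot)\circ(\mcZ_\infty,\mcV_\infty)^{-1}\cdot\mcD^\alpha$ with an explicit inverse-Jacobian factor $\mcD^\alpha$, reads off $C_c^2$ regularity and the conservation laws (with $\psi=1,v,\tfrac12 m_\alpha|v|^2$) from that formula, and only then matches the weak limit with the strong one. You instead pass to the limit in the pointwise transport identity $h^\alpha(t,\mcZ(t),\mcV(t))=f^\alpha(\tau,x,v)$ to get $f^\alpha_\infty\circ(\mcZ_\infty,\mcV_\infty)=f^\alpha(\tau,\cdot)$ with no Jacobian, and get the conservation laws by a unit-Jacobian shift at each finite $t$ plus $L^1$ convergence on a uniformly compact support. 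Your route is arguably cleaner (it avoids dominated convergence and the weak/strong matching step), and your observation that $\mathrm{supp}\,h^\alpha(t)$ is the image of $\bS_f(1)$ under $(\mcZ(t,1,\cdot),\mcV(t,1,\cdot))$, hence uniformly bounded, is sharper than the $O(\ln t)$ support bound the paper uses. One point worth a sentence in a write-up: your formula for $f^\alpha_\infty$ and the paper's are consistent only because $\det\frac{\partial(\mcZ_\infty,\mcV_\infty)}{\partial(x,v)}\equiv 1$, which holds since each finite-time map $(x,v)\mapsto(\mcZ(t),\mcV(t))$ is measure-preserving (Lemma \ref{MPP} composed with a $v$-dependent shear) and the derivative matrices converge; neither your argument nor the paper's depends on this, but a reader comparing the two formulas will want it noted.
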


\begin{proof}
\alt{We first establish the weak convergence of the distribution function and properties of the associated limit.}
Let $\psi  \in C_b(\bfR^6)$ be given and fix any $T > T_2 > T_1$ from Lemmas \ref{Vinfinvert} and \ref{LModChar}.
Then, we apply the well-known measure-preserving change of variables $(\tilde{x}, \tilde{v}) = (\mcX(T,t,x,v), \mcV(T, t, x, v))$, so that
\begin{align*}
& \lim_{t \to \infty} \iint \psi(x,v) h^\alpha(t,x,v) \ dv dx\\ 
& \qquad = \lim_{t \to \infty} \iint \psi(x,v) f^\alpha \left (t, x +vt - \alt{\frac{q_\alpha}{m_\alpha}} \ln(t) E_\infty(v), v \right ) \ dv dx\\
& \qquad  = \lim_{t \to \infty} \iint\limits_{\alt{\bS_f(t)}} \psi \left (x -vt  + \alt{\frac{q_\alpha}{m_\alpha}} \ln(t) E_\infty(v),v \right) f^\alpha(t, x, v) \ dv dx\\
& \qquad  = \lim_{t \to \infty} \iint\limits_{\alt{\bS_f(t)}} \psi \left(x -vt +\alt{\frac{q_\alpha}{m_\alpha}} \ln(t) E_\infty(v),v \right) f^\alpha(T, \mcX(T,t,x, v), \mcV(T,t,x, v)) \ dv dx\\
& \qquad  = \lim_{t \to \infty}\iint\limits_{\alt{\bS_f(T)}} \psi(\mcZ(t, T, \tilde{x}, \tilde{v}), \mcV(t,T,\tilde{x}, \tilde{v})) f^\alpha(T, \tilde{x}, \tilde{v}) \ d\tilde{v} d\tilde{x}\\
& \qquad  = \iint\limits_{\alt{\bS_f(T)}} \psi(\mcZ_\infty(T,\tilde{x}, \tilde{v}), \mcV_\infty(T,\tilde{x}, \tilde{v})) f^\alpha(T, \tilde{x}, \tilde{v}) \ d\tilde{v} d\tilde{x}
\end{align*}
by Lebesgue's Dominated Convergence Theorem. Now, by Lemma \ref{LModChar} for any $(\tilde{x}, \tilde{v}) \in \alt{\bS_f(T)}$, the mapping 
$$(\tilde{x}, \tilde{v}) \mapsto \left (\mcZ_\infty(T, \tilde{x}, \tilde{v}), \mcV_\infty(T, \tilde{x}, \tilde{v}) \right )$$ is $C^2$ with 
$$\left |\det \left (\frac{\partial (\mcZ_\infty, \mcV_\infty)}{\partial (x,v)}\right )(T, \tilde{x}, \tilde{v})\right |
\geq \frac{1}{2},$$ 
and thus bijective from $\alt{\bS_f(T)}$ to $\Om^\alpha$. 
Hence, letting $(y,u) = \left (\mcZ_\infty(T, \tilde{x}, \tilde{v}), \mcV_\infty(T, \tilde{x}, \tilde{v}) \right )$, we perform a change of variables and drop the tilde notation to find
\begin{align*}
& \lim_{t \to \infty} \int \psi(x,v) h^\alpha(t,x,v) \ dv dx\\
& = \iint  \psi(y,u) f^\alpha(T, Z^\alpha(y,u), V^\alpha(y,u)) \chfn_{\Om^\alpha}(y,u) \mcD^\alpha(y,u) \ du dy
\end{align*}
where 
$$\mcD^\alpha(y,u) = \left | \det \left (\frac{\partial (\mcZ_\infty, \mcV_\infty)}{\partial(x,v)} \right )(T, Z^\alpha(y,u), V^\alpha(y,u))\right |^{-1}$$
and the $C^2$ function $(Z^\alpha, V^\alpha): \Om^\alpha \to \alt{\bS_f(T)}$ is given by the components
$$Z^\alpha(y,u) = \left ( \mcZ_\infty \right ) ^{-1}(T,y,u)$$
and
$$V^\alpha(y,u) = \left (\mcV_\infty\right )^{-1}(T,y,u),$$
respectively.
Therefore, for any $(y,u) \in \bfR^6$ define 
$$f^\alpha_\infty(y,u) = f^\alpha(T, Z^\alpha(y,u), V^\alpha(y,u)) \chfn_{\Om^\alpha}(y,u) \mcD^\alpha(y, u).$$
Thus, $f^\alpha_\infty \in C_c^2(\bfR^6)$ and satisfies
\begin{equation}
\label{Weaklimit2}
\lim_{t \to \infty} \iint \psi(x,v) h^\alpha(t,x,v)   dv dx = \iint \psi(y,u) f^\alpha_\infty(y,u)  du dy.
\end{equation}
Notice further that due to the compact support and regularity of $f^\alpha(t)$ and $f^\alpha_\infty$, it is sufficient to take $\psi \in C(\mcU)$, where $\mcU \subset \bfR^6$ is compact, for this equality to hold.
Finally, the conservation laws are maintained in the limit by merely choosing $\psi(x,v) = 1$, $\psi(x,v) = v$, and $\psi(x,v) = \frac{1}{2} \alt{m_\alpha}|v|^2$ within \eqref{Weaklimit}. The arguments are analogous to those of Lemma \ref{Funif}.

\alt{Next, we establish the uniform convergence of this function using the compact support of the limit.}
Fix $\alpha = 1, ..., N$ and for brevity let
$$\mcW(t,x,v) = x - \alt{\frac{q_\alpha}{m_\alpha}} \ln(t)E_\infty(v)$$
for every $t \geq 1$ and $x,v \in \bfR^3$ so that
$$h^\alpha(t,x,v) := f^\alpha \left (t,x + vt - \alt{\frac{q_\alpha}{m_\alpha}} \ln(t)E_\infty(v), v \right ) = g^\alpha(t, \mcW(t,x,v), v).$$
Throughout, we will assume $t \geq 1$ is suitably large.
Due to \eqref{Y} and the compact spatial support of $f^\alpha_\infty$ \alt{established above}, we note that there is $C>0$ 
such that $g^\alpha(t,\mcW(t,x,v),v) = f^\alpha_\infty(x,v)= 0$ whenever $|x| \geq C\ln(t)$. Hence, it suffices to take $|x| \lesssim \ln(t)$, and thus $|\mcW(t,x,v)| \lesssim \ln(t)$.
Because $g^\alpha$ satisfies \eqref{VPg}, we deduce that $h^\alpha$ satisfies
$$\partial_t h^\alpha = \alt{\frac{q_\alpha}{m_\alpha}} t^{-1} \left ( t^2 E(t, \mcW+vt) - E_\infty(v) \right ) \cdot \nabla_x g^\alpha(t,\mcW,v) - \alt{\frac{q_\alpha}{m_\alpha}}E(t, \mcW+vt)\cdot \nabla_v g^\alpha(t,\mcW,v).$$
Similar to the proof of Lemma \ref{Funif}, we wish to show that 
$\displaystyle \| \partial_t h^\alpha(t) \|_\infty$ is integrable in order to establish the existence of a limiting function in this norm.

To this end, we decompose $\partial_t h^\alpha$ so that
$$\left |\partial_t h^\alpha(t,x,v) \right | \alt{\lesssim} I+ II$$
where
$$I = t^{-1}\left | \left ( t^2 E(t, \mcW+vt) - E_\infty(v) \right ) \cdot \nabla_x g^\alpha(t,\mcW,v)  \right |$$
and
$$II = \left | E(t, \mcW+vt)\cdot \nabla_v g^\alpha(t,\mcW,v) \right |.$$
The second term is well-behaved. Indeed, using Lemma \ref{XVsupp} and \eqref{G} we find
\begin{equation}
\label{IIh}
II \leq \|E(t) \|_\infty \mcG(t)  \lesssim t^{-2}\ln^2(t).
\end{equation}
Using Lemma \ref{DEf} we find for every $\alpha = 1, ..., N$
$$ \| \nabla_x g^\alpha(t) \|_\infty  = \| \nabla_x f^\alpha(t) \|_\infty \lesssim 1.$$
Thus, the latter term in $I$ is uniformly bounded in time, which implies
$$I \lesssim t^{-1} \left | t^2 E(t, \mcW+vt) - E_\infty(v) \right |.$$
Because $\left |\mcW(t,x,v) \right |  \lesssim \ln(t)$, we invoke Lemma \ref{LField} with $\beta(t) = C\ln(t)$ to conclude
\begin{equation}
\label{Ih}
I \lesssim t^{-2}\ln^5(t).
\end{equation}
Combining \eqref{IIh} and \eqref{Ih}, we have
$$ \|\partial_t h^\alpha(t) \|_\infty \lesssim t^{-2}\ln^5(t).$$
As this bound is integrable in time, there is $\mathfrak{f}^\alpha \in C(\bfR^6)$ such that
$$\| h^\alpha(t) - \mathfrak{f}^\alpha \|_\infty \lesssim t^{-1}\ln^5(t).$$
\alt{Of course}, the strong limit implies the weak limit, and we find $\mathfrak{f}^\alpha = f^\alpha_\infty$ by equivalence of weak limits.
\end{proof}

\section{Proofs of Theorems and Sharpness of Estimates}
\label{sect:prop}

We begin this section by proving the first theorem.
\begin{proof}[Proof of Theorem \ref{T1}]
The theorem follows by merely collecting the estimates of the previous sections.
Indeed, beginning with the decay estimate of the field \eqref{Assumption}, we conclude the improved decay estimates of  Lemma \ref{XVsupp}. Then, we invoke Lemmas \ref{DEf}, \ref{Xsupp}, and \ref{Dvg} to produce \eqref{Sx} and \eqref{G}.
With this, the spatial average of each species converges with the estimate provided in Section \ref{sect:vellim}.
Applying \eqref{Sx} and \eqref{G} to Lemmas \ref{LField}, \ref{LDE}, \ref{LDensity}, and \ref{LCurrent} provides the asymptotic behavior of the field, field derivatives, and spatial densities.
The estimates concerning the limits of modified spatial characteristics and the modified scattering result are provided in Section \ref{sect:modscatt}.
\alt{Additionally, Lemma \ref{LDE} removes the logarithmic factor from the original field derivative estimate of Lemma \ref{DEf}
so that
$$\| \nabla_x E(t) \|_\infty \leq t^{-3} \|E_\infty\|_\infty + t^{-4}\ln^7(t)\lesssim t^{-3}.$$
Using this in Lemma \ref{Dvg} then reduces the growth of velocity derivatives of $g^\alpha$ by one logarithmic power to 
$$\mcG(t) \lesssim \ln(t)$$
instead of \eqref{G}.
This estimate is then applied within Lemma \ref{Funif} and Corollary \ref{P}, as well as, Lemmas \ref{LField}, \ref{LDE}, \ref{LDensity}, \ref{LCurrent}, \ref{LModChar}, and \ref{LModScattering},
which provides the stated logarithmic powers and completes the proof.}
\end{proof}

Next, notice that if $\mcM \neq 0$, then the estimates presented in Theorem \ref{T1} completely characterize the asymptotic behavior of the field and associated densities. Indeed, \eqref{Pinfmass} yields
$$\int \mcP_\infty(v) \ dv = \mcM \neq 0,$$
which implies $\mcP_\infty \not\equiv 0$ and $E_\infty \not\equiv 0$.
Hence, the decay estimates are sharp, up to a a change in the logarithmic power.
This can be further demonstrated by the following result.

\begin{lemma}
If $\mcM \neq 0$, then 
$$  t^{-3} \lesssim \| \rho(t) \|_\infty  \qquad \mathrm{and} \qquad t^{-2} \lesssim \| E(t) \|_\infty.$$
\end{lemma}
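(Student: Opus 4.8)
The plan is to deduce both lower bounds directly from the self-similar asymptotic profiles for $\rho$ and $E$ in Theorem \ref{T1}(3), combined with the mass identity \eqref{Pinfmass}. Since $\mcM \neq 0$, \eqref{Pinfmass} gives $\int \mcP_\infty(v)\,dv = \mcM \neq 0$, so $\mcP_\infty \not\equiv 0$; as $\mcP_\infty \in C_c^2(\bfR^3)$ is continuous (Corollary \ref{P}), there are $v_0 \in \bfR^3$ and $\delta > 0$ with $|\mcP_\infty(v_0)| = 2\delta$. Evaluating the charge density profile of Theorem \ref{T1} at the point $x = v_0 t$ yields
$$\left| t^3 \rho(t, v_0 t) - \mcP_\infty(v_0) \right| \lesssim t^{-1}\ln^5(t),$$
so the right side tends to $0$ and $|t^3 \rho(t, v_0 t)| \geq \delta$ for $t$ sufficiently large; hence $\|\rho(t)\|_\infty \geq |\rho(t, v_0 t)| \geq \delta\, t^{-3}$, which is the first assertion.

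For the field, I would first note that $E_\infty \not\equiv 0$: since $\mathrm{supp}(\mcP_\infty)$ is compact, for $|v|$ large the defining convolution gives $E_\infty(v) = \frac{\mcM}{4\pi}\frac{v}{|v|^3} + O(|v|^{-3})$, and because $\mcM \neq 0$ this is nonzero once $|v|$ is large enough. (Alternatively, $E_\infty \equiv 0$ would force the Newtonian potential $(\Delta_v)^{-1}\mcP_\infty$ to be constant, hence identically $0$ by its decay at infinity, hence $\mcP_\infty \equiv 0$, a contradiction.) Fixing $v_1$ with $E_\infty(v_1) \neq 0$ and evaluating the field profile of Theorem \ref{T1} at $x = v_1 t$ gives $|t^2 E(t, v_1 t) - E_\infty(v_1)| \lesssim t^{-1}\ln^4(t) \to 0$, so $\|E(t)\|_\infty \geq |E(t, v_1 t)| \gtrsim t^{-2}$ for $t$ large.

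The argument is short, and the only step requiring a separate observation is the non-vanishing of $E_\infty$; once $\mcP_\infty \not\equiv 0$ and $E_\infty \not\equiv 0$ are in hand, both lower bounds reduce to evaluating the already-established asymptotic profiles along a fixed ray $x = v_\ast t$ for a suitable $v_\ast$, and letting the $t^{-1}\ln^k(t)$ error absorb into half the value of the (continuous) limiting profile at $v_\ast$.
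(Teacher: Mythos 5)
Your proof is correct, but it takes a genuinely different route from the paper. The paper's argument is much more elementary: it uses only conservation of charge together with the linear-in-$t$ bound on the spatial support from Lemma \ref{XVsupp}. For the density one simply writes $|\mcM| \leq \int_{|x|\leq Ct} |\rho(t,x)|\,dx \leq Ct^3\|\rho(t)\|_\infty$, and for the field one applies the Divergence Theorem on the ball $\{|x|\leq Ct\}$ to get $|\mcM| \leq \int_{|x|=Ct}|E(t,x)|\,dS_x \leq Ct^2\|E(t)\|_\infty$; rearranging gives both lower bounds with no reference to the limiting profiles. Your approach instead invokes the full asymptotic profiles of Theorem \ref{T1}(3), which requires the separate (and correctly handled) observation that $E_\infty \not\equiv 0$ — your multipole expansion $E_\infty(v) = \frac{\mcM}{4\pi}\frac{v}{|v|^3} + O(|v|^{-3})$ and the alternative Liouville-type argument are both valid, and the evaluation along rays $x = v_\ast t$ is sound since the $t^{-1}\ln^k(t)$ errors vanish. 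What your route buys is more information: it locates where the lower bounds are attained (along rays into the support of $\mcP_\infty$, respectively where $E_\infty\neq 0$) and yields pointwise rather than merely sup-norm lower bounds. What the paper's route buys is economy and robustness: it needs none of the convergence machinery, only conservation of charge and Gauss's law, and so would survive even if the logarithmic error rates were weakened.
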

\begin{proof}
The results follow merely from the conservation of \alt{charge} and the bound on spatial characteristics provided by Lemma \ref{XVsupp}. Indeed, we have
$$| \mcM | \leq \int_{|x| \leq Ct} | \rho(t,x) | \ dx \leq Ct^3 \| \rho(t) \|_\infty.$$
Due to the Divergence Theorem, the electric flux satisfies a similar estimate so that
$$| \mcM | =  \left | \int_{|x| \leq Ct} \nabla_x \cdot E(t,x) \ dx \right | \leq \int_{|x| = Ct} | E(t,x) | \ dS_x \leq Ct^2 \| E(t) \|_\infty.$$
Rearranging the inequalities and using $\mcM \neq 0$ then yields the stated estimates.
\end{proof}

However, when the plasma is neutral, i.e. $\mcM = 0$, it is possible that the limiting density and field are identically zero, which implies stronger decay properties for these quantities, as displayed in Theorem \ref{T2}.

\begin{proof}[Proof of Theorem \ref{T2}]
Beginning with the decay estimate of the field \eqref{Assumption}, we can repeat the argument as in the proof of Theorem \ref{T1} to arrive at the estimates stated therein. However, because $\mcP_\infty \equiv 0$, and thus $E_\infty \equiv 0$, the lemmas of Section \ref{sect:fieldconv} yield
$$\| E(t) \|_\infty \lesssim t^{-3} \ln^5(t)$$
from Lemma \ref{LField},
$$\| \nabla_x E(t) \|_\infty \lesssim t^{-4} \ln^7(t)$$
from Lemma \ref{LDE}, and
$$\| \rho(t) \|_\infty \lesssim t^{-4} \ln^6(t)$$
from Lemma \ref{LDensity}.
With these faster decay rates, Lemmas \ref{Xsupp} and \ref{Dvg} provide uniform-in-time bounds for the spatial support and velocity derivatives of $g^\alpha$ so that
\begin{equation}
\label{SG} 
\alt{\mu(t)} \lesssim 1 \qquad \mathrm{and} \qquad \mcG(t) \lesssim 1.
\end{equation}
Lemma \ref{Funif} then yields  
$$\| F^\alpha(t) - F^\alpha_\infty \|_\infty \lesssim t^{-2} \ln^6(t)$$
so that
$$\| \mcP(t) \|_\infty \lesssim t^{-2} \ln^6(t).$$
This estimate, used in conjunction with \eqref{SG}, then provides the stated decay rates of $\| E(t) \|_\infty$, $\| \nabla_x E(t) \|_\infty$, $\| \rho(t) \|_\infty$, and $\| j(t) \|_\infty$ by invoking Lemmas \ref{LField}, \ref{LDE}, \ref{LDensity}, and \ref{LCurrent}. In turn,
using the resulting rates in Lemma \ref{Funif} yields the stated estimates on $\| F^\alpha(t) - F^\alpha_\infty \|_\infty$ and $\| \mcP(t) \|_\infty$.
The improved rate of the field further produces the faster convergence rate of the velocity characteristics from Lemma \ref{L6}.

Finally, the convergence of the translated spatial characteristics follows by applying the arguments of Section \ref{sect:modscatt} to $\mcY(t)$ for every $\alpha = 1, ..., N$ and using the improved decay of the field and its derivatives.
Indeed, from Lemma \ref{Xsupp}, we find
$$| \dot{\mathcal{Y}}^\alpha(t) | \lesssim t^{-2}$$
so that $\mcY(t)$ tends to a limit defined by
$$\mcY_\infty(\tau, x,v):= x-v\tau + \alt{\frac{q_\alpha}{m_\alpha}} \int_\tau^\infty s E(s, \mcX(s, \tau, x, v)) \ ds$$
with the convergence estimate
$$\left | \mcY(t,\tau, x, v) - \mcY_\infty(\tau, x, v) \right | \lesssim t^{-1}$$
for all $\tau \geq 0, (x,v) \in \mcU$.
We can then define
$$\Omy := \left \{ \mcY_\infty(0, x, v) : (x, v) \in \alt{\bS_f(0)} \right \}.$$
Using the methods of Lemma \ref{dY}, estimates on derivatives follow immediately, namely
$$\left | \frac{\partial \mcY_\infty}{\partial x}(\tau, x, v) - \mathbb{I} \right | \lesssim \int_\tau^\infty s^2 \| \nabla_x E(s) \|_\infty ds \lesssim \tau^{-1}\ln(\tau)$$
and
$$\left | \frac{\partial \mcY_\infty}{\partial v}(\tau, x, v)   + \tau \mathbb{I} \right | \lesssim \int_\tau^\infty s^2 \| \nabla_x E(s) \|_\infty \ ds \lesssim \tau^{-1}\ln(\tau)$$
so that $\left | \frac{\partial \mcY_\infty}{\partial v}(\tau, x, v) \right | \lesssim \tau$.
With these estimates, the determinant of the mapping $(x,v) \mapsto \left (\mcY_\infty(\tau,x,v), \mcV_\infty(\tau,x,v) \right )$ is bounded away from zero for $\tau$ sufficiently large using the Schur complement as in Lemma \ref{LModChar}.
Finally, the arguments proving the convergence of the translated distribution function $g^\alpha$ follow analogously to that of Lemma \ref{LModScattering}. In particular, from the Vlasov equation in \eqref{VPg} we find
$$|\partial_t g^\alpha(t,x,v) | \alt{\lesssim} \left |t E(t, x+vt) \cdot \nabla_x g^\alpha - E(t, x+vt) \cdot \nabla_v g^\alpha \right | \lesssim t\| E(t) \|_\infty + \| E(t) \|_\infty \mcG(t)$$
and thus
$$ \| \partial_t g^\alpha(t) \|_\infty \lesssim t^{-2}$$
for every $\alpha = 1, ..., N$,
which completes the proof.
\end{proof}

\section{\alt{Ackowledgements}}
\alt{The author would like to thank the anonymous reviewers for their valuable comments and suggestions to improve the paper.}



\begin{thebibliography}{}
%
%

\bibitem{BD} Bardos, C. and Degond, P., Global existence for the {V}lasov-{P}oisson equation in {$3$} space variables with small initial data. Ann. Inst. H. Poincar\'e Anal. Non Lin\'eaire {\bf 1985}, 2(2): 101-118.

\bibitem{BKR} Batt, J., Kunze, M., and Rein, G., On the asymptotic behavior of a one-dimensional, monocharged plasma and a rescaling method. Advances in Differential Equations {\bf 1998}, 3: 271-292.

\bibitem{BMM} Bedrossian, J., Masmoudi, N., and Mouhot, C., Landau Damping in Finite Regularity for Unconfined Systems with Screened Interactions. Communications on Pure and Applied Mathematics {\bf 2017}, 71(3): 537-576.

\bibitem{BCP1} Ben-Artzi, J., Calogero, S., and Pankavich, S., Arbitrarily large solutions of the Vlasov-Poisson system. SIAM J. Math. Anal. {\bf 2018}, 50(4): 4311-4326.



\bibitem{Castella} Castella F., Propagation of space moments in the Vlasov-Poisson equation and further results. Ann. de l'Inst. Henri Poincare-Anal. non lineaire {\bf 1999}, 16: 503-533.


\bibitem{Chen2} Chen, Z. and Zhang, X., Global Existence to the Vlasov-Poisson System and Propagation of Moments Without Assumption of Finite Kinetic Energy. Communications in Mathematical Physics {\bf 2016}, 343: 851-879.

\bibitem{CK} Choi, S. and Kwon, S., Modified scattering for the Vlasov-Poisson system, Nonlinearity {\bf 2016} 29: 2755.

\bibitem{Glassey} Glassey, R., The Cauchy Problem in Kinetic Theory.  SIAM: {\bf 1996}.

\bibitem{GPS}  Glassey, R., Pankavich, S., and Schaeffer, J., Decay in Time for a One-Dimensional, Two Component Plasma. Math. Meth Appl. Sci. {\bf 2008}, 31:2115-2132.

\bibitem{GPS2} Glassey, R., Pankavich, S., and Schaeffer, J., On long-time behavior of monocharged and neutral plasma in one and one-half dimensions. Kinetic and Related Models {\bf 2009}, 2: 465-488.

\bibitem{GPS4} Glassey, R., Pankavich, S., and Schaeffer, J., Time Decay for Solutions to the One-dimensional Equations of Plasma Dynamics. Quarterly of Applied Mathematics {\bf 2010}, 68: 135-141.

\bibitem{GPS5} Glassey, R., Pankavich, S., and Schaeffer, J., Large Time Behavior of the Relativistic Vlasov-Maxwell System in Low Space Dimension. Differential \& Integral Equations {\bf 2010} 23: 61-77.

\bibitem{HNR} Han-Kwan, D., Nguyen, T., Rousset, F., Asymptotic stability of equilibria for screened Vlasov-Poisson systems via pointwise dispersive estimates. \alt{Ann. PDE {\bf 2021}, 7: 18.}

\bibitem{Hormander} Hormander, L., The Analysis of Linear Partial Differential Operators I: Distribution Theory and Fourier Analysis, 2nd ed., Springer-Verlag, Berlin, {\bf 1990}.

\bibitem{Horst} Horst, E., Symmetric plasmas and their decay. Comm. Math. Phys. {\bf 1990}, 126: 613-633.

\bibitem{HRV} Hwang, H., Rendall, A., and Velazquez, J., Optimal gradient estimates and asymptotic behaviour for the Vlasov-Poisson system with small initial data. Archive for rational mechanics and analysis {\bf 2011}, 200: 313-360.

\bibitem{IR} Illner, R. and Rein, G., Time decay of the solutions of the Vlasov-Poisson system in the plasma physical case. Math. Methods Appl. Sci. {\bf 1996}, 19: 1409-1413.

\bibitem{Ionescu} Ionescu, A., Pausader, B., Wang, X., Widmayer, K., On the asymptotic behavior of solutions to the Vlasov-Poisson system. \alt{International Mathematics Research Notices {\bf 2021}, rnab155, https://doi.org/10.1093/imrn/rnab155.}

\bibitem{LP} Lions, P.L. and Perthame, B., Propogation of moments and regularity for the three dimensional Vlasov-Poisson system. Invent. Math. {\bf 1991}, 105: 415-430.

\bibitem{Pallardspatial} Pallard, C., Space Moments of the Vlasov--Poisson System: Propagation and Regularity. SIAM J. Math. Anal. {\bf 2014}  46(3): 1754-1770.

\bibitem{Pallardsupport} Pallard, C., Growth estimates and uniform decay for  a collisionless plasma. Kinetic \& Related Models {\bf 2011}, 4(2): 549-567.

\bibitem{Pankavich2020} Pankavich, S., Exact Large Time Behavior of Spherically-Symmetric Plasmas. \alt{SIAM J. Math. Anal. {\bf 2021} 53(4): 4474-4512.}


\bibitem{Perthame} Perthame, B., Time decay, propagation of low moments and dispersive effects for kinetic equations, Comm. PDE {\bf 1996} 21(1/2): 659-686.

\bibitem{Pfaf} Pfaffelmoser, K., Global classical solution of the Vlasov-Poisson system in three dimensions for general initial data. J. Diff. Eq.  {\bf 1992}, 95(2): 281-303.

\bibitem{Rein} Rein G., Collisionless Kinetic Equations from Astrophysics - The Vlasov-Poisson System, in Handbook of Differential Equations, Evolutionary
Equations, {\bf 2007}, Eds. C. M. Dafermos and E. Feireisl, Elsevier: 383-479.

\bibitem{Schaeffer} Schaeffer, J., Global existence of smooth solutions to the Vlasov-Poisson system in three dimensions, Comm. PDE {\bf 1991}, 16(8-9): 1313-1335.

\bibitem{Jacksupport} Schaeffer, J., Asymptotic growth bounds for the Vlasov-Poisson system. Math. Meth. Appl. Sci. {\bf 2011}, 34: 262-277.

\bibitem{Jacknew} Schaeffer, J., An Improved Small Data Theorem for the Vlasov-Poisson System. Communications in Mathematical Sciences {\bf 2021}, 19 (3): 721-736.

\bibitem{Sch} Schaeffer, J., Large-time behavior of a one-dimensional monocharged plasma. Diff. and Int. Equations {\bf 2007}, 20(3): 277-292.

\bibitem{Smulevici} Smulevici, J., Small data solutions of the Vlasov-Poisson system and the vector field method.  Ann. PDE {\bf 2016} 2(2).

\bibitem{Tuckerman} \alt{Tuckerman, M.,  Statistical Mechanics: Theory and molecular simulation. Oxford: Oxford University Press: {\bf 2010}.}

\bibitem{VM} Mouhot, C. and Villani, C.,  On Landau damping. Acta Math {\bf 2011}, 207, 29-201.

\bibitem{Yang} Yang, D., Growth estimates and uniform decay for the Vlasov-Poisson system, Math. Methods Appl. Sci. {\bf 2017}, 40: 4906-4916.


\end{thebibliography}
\end{document}